  \edef\mtht{\the\textheight}
  \edef\mtwd{\the\textwidth}
  \definecolor{BackgroundColor}{RGB}{253, 246, 227}
\tikzset{
  commutative diagrams/.cd, 
  arrow style=tikz, 
  diagrams={>=stealth}
}
\addspace\texttt{\mkbibbrackets{\thefield{arxivclass}}}}}}
\addspace\texttt{\mkbibbrackets{\thefield{arxivclass}}}}}}
\newcommand{\printreferences}{\printbibliography[heading=bibintoc]}
\ifundef{\abstract}{}{\patchcmd{\abstract}%
    {\quotation}{\quotation\noindent\ignorespaces}{}{}}
\numberwithin{equation}{section}
\renewcommand{\eqref}[1]{\hyperref[#1]{\rm(\ref*{#1})}}
\def\makeautorefname#1#2{\AtBeginDocument{\expandafter\def\csname#1autorefname\endcsname{#2}}}
\newcommand{\mynewtheorem}[2]{
  \newaliascnt{#1}{equation}          
  \newtheorem{#1}[#1]{#2}
  \aliascntresetthe{#1}
  \makeautorefname{#1}{#2}
}
\newtheorem*{axiom*}{Axiom}
\newtheorem*{theorem*}{Theorem}
\newtheorem*{prop*}{Proposition}
\newtheorem*{conjecture*}{Conjecture}
\newtheorem{step}{Step}
\numberwithin{substep}{step}
\newtheorem{case}{Case}
\numberwithin{subcase}{case}
\theoremstyle{remark}
\newtheorem*{remark*}{Remark}
\newtheorem*{convention*}{Convention}
\newtheorem*{conventions*}{Conventions}
\theoremstyle{remark}
\theoremstyle{definition}
\newtheorem*{definition*}{Definition}
\newtheorem*{example*}{Example}
\newtheorem*{question*}{Question}
\let\C\undefined
\DeclareFontFamily{U}{mathx}{\hyphenchar\font45}
\DeclareFontShape{U}{mathx}{m}{n}{
      <5> <6> <7> <8> <9> <10>
      <10.95> <12> <14.4> <17.28> <20.74> <24.88>
      mathx10
      }{}
\DeclareSymbolFont{mathx}{U}{mathx}{m}{n}
\DeclareMathAccent{\widecheck}{0}{mathx}{"71}
\DeclareMathAccent{\wideparen}{0}{mathx}{"75}
\DeclareMathOperator{\End}{End}
\DeclareMathOperator{\Fr}{Fr}
\DeclareMathOperator{\GL}{GL}
\DeclareMathOperator{\HF}{\HF}
\DeclareMathOperator{\Hol}{Hol}
\DeclareMathOperator{\Hom}{Hom}
\DeclareMathOperator{\PD}{PD}
\DeclareMathOperator{\coker}{coker}
\DeclareMathOperator{\im}{im}
\DeclareMathOperator{\ind}{index}
\DeclareMathOperator{\supp}{supp}
\DeclarePairedDelimiter{\Abs}{\|}{\|}
\DeclarePairedDelimiter{\set}{\lbrace}{\rbrace}
\def\({\left(}
\def\){\right)}
\def\<{\left\langle}
\def\>{\right\rangle}
\newcommand{\C}{{\mathbf{C}}}
\newcommand{\N}{{\mathbf{N}}}
\newcommand{\R}{\mathbf{R}}
\newcommand{\SO}{\mathrm{SO}}
\newcommand{\SU}{\mathrm{SU}}
\newcommand{\Spin}{\mathrm{Spin}}
\newcommand{\Z}{\mathbf{Z}}
\newcommand{\andq}{\text{and}\quad}
\newcommand{\ch}{\mathrm{ch}}
\newcommand{\co}{\mskip0.5mu\colon\thinspace}
\newcommand{\defined}[2][\key]{\def\key{#2}\textbf{#2}\index{#1}}
\newcommand{\del}{\partial}
\newcommand{\ev}{\mathrm{ev}}
\newcommand{\id}{\mathrm{id}}
\newcommand{\iso}{\cong}
\newcommand{\qandq}{\quad\text{and}\quad}
\newcommand{\qand}{\quad\text{and}}
\newcommand{\so}{\mathfrak{so}}
\newcommand{\spin}{\mathfrak{spin}}
\newcommand{\vol}{\mathrm{vol}}
\renewcommand{\H}{\mathbf{H}}
\renewcommand{\Re}{\operatorname{Re}}
\renewcommand{\epsilon}{\varepsilon}
\renewcommand{\setminus}{{\backslash}}
\renewcommand{\leq}{\leqslant}
\renewcommand{\geq}{\geqslant}
\renewcommand*\env@matrix[1][*\c@MaxMatrixCols c]{%
  \hskip -\arraycolsep
  \let\@ifnextchar\new@ifnextchar
  \array{#1}}
\renewcommand\xleftrightarrow[2][]{%
  \ext@arrow 9999{\longleftrightarrowfill@}{#1}{#2}}
\newcommand\longleftrightarrowfill@{%
  \arrowfill@\leftarrow\relbar\rightarrow}
\newcommand{\rd}{{\rm d}}
\newcommand{\rG}{{\rm G}}
\newcommand{\rI}{{\rm I}}
\newcommand{\rII}{{\rm II}}
\newcommand{\rIII}{{\rm III}}
\newcommand{\bA}{{\mathbf{A}}}
\newcommand{\bE}{{\mathbf{E}}}
\newcommand{\bL}{{\mathbf{L}}}
\newcommand{\cH}{\mathcal{H}}
\newcommand{\sA}{\mathscr{A}}
\newcommand{\sG}{\mathscr{G}}
\newcommand{\sM}{\mathscr{M}}
\newcommand{\sO}{\mathscr{O}}
\newcommand{\fg}{{\mathfrak g}}
\newcommand{\fp}{{\mathfrak p}}
\newcommand{\fq}{{\mathfrak q}}
\newcommand{\fs}{{\mathfrak s}}
\newcommand{\fu}{{\mathfrak u}}
\newcommand{\fA}{{\mathfrak A}}
\newcommand{\fF}{{\mathfrak F}}
\newcommand{\fI}{{\mathfrak I}}
\newcommand{\fK}{{\mathfrak K}}
\newcommand{\fL}{{\mathfrak L}}
\newcommand{\fM}{{\mathfrak M}}
\newcommand{\fT}{{\mathfrak T}}
\newcommand{\fX}{{\mathfrak X}}
\newcommand{\fY}{{\mathfrak Y}}
\newcommand{\slD}{\slashed D}
\newcommand{\slS}{\slashed S}
\author{
  Thomas Walpuski
}
\title{
  \texorpdfstring{$\Spin(7)$}{Spin(7)}--instantons, Cayley submanifolds, and Fueter sections
}
\date{2016-05-09}
\begin{document}
\maketitle

\begin{abstract}
  We prove an existence theorem for $\Spin(7)$--instantons, which are highly concentrated near a Cayley submanifold;
  thus giving a partial converse to Tian's foundational compactness theorem~\cite{Tian2000}.
  As an application, we show how to construct $\Spin(7)$--instantons on $\Spin(7)$--manifolds with suitable local $K3$ Cayley fibrations.
  This recovers an example constructed by \citet{Lewis1998}.
\end{abstract}

\section{Introduction}

In this article we study some aspects of gauge theory on \defined{$\Spin(7)$--manifolds}, i.e., compact Riemannian $8$--manifolds with holonomy contained in the exceptional Lie group $\Spin(7)\subset \SO(8)$.
Every $\Spin(7)$--manifold $X$ comes equipped with a $4$--form $\Phi$, which is a calibration in the sense of \citet{Harvey1982}.
Submanifolds $Q \subset X$ which are calibrated by $\Phi$ are called \defined{Cayley submanifolds}.
The linear operator $*(\cdot\wedge\Phi)\co \Lambda^2\to \Lambda^2$ has eigenvalues $-1$ and $3$ and with eigenspaces of dimension $21$ and $7$ respectively;
and, in analogy with gauge theory on $4$--manifolds, we consider connections $A$ whose curvature satisfies the ``anti-self-duality'' condition
\begin{equation}
  \label{Eq_Spin7Instanton0}
  *(F_A\wedge \Phi) = -F_A.
\end{equation}
After gauge fixing, \eqref{Eq_Spin7Instanton0} becomes elliptic.
Solutions to \eqref{Eq_Spin7Instanton0}, commonly called \defined{$\Spin(7)$--instantons}, are absolute minimisers of the Yang--Mills functional.
These equations play an important rôle in the Donaldson--Thomas programme~\cite{Donaldson1998}  to develop gauge theory in higher dimensions and, by dimensional reduction, give rise to a plethora of interesting gauge theoretical equations in dimensions less than eight.

\citet{Tian2000} discovered that there is an interesting relation between gauge theory in higher dimension and calibrated geometry.
In particular, his foundational compactness result---extending work of \citet{Price1983,Uhlenbeck1982a,Nakajima1988}---predicts that a sequence $(A_i)$ of $\Spin(7)$--instantons \emph{could} degenerate by ``bubbling off ASD instantons transversely to a Cayley submanifold $Q$''.
More precisely, outside $Q$ the sequence $(A_i)$ converges smoothly (possibly after passing to a subsequence and changing gauge) and for each ~$x\in Q$ there exists a non-trivial ASD instanton $\fI(x)$ on $N_xQ \coloneq T_xQ^\perp$ whose pullback to $T_xX$ is the limit of a blowing up of the sequence $(A_i)$ around the point $x$.
The main result of this article gives sufficient conditions under which this phenomenon \emph{will} appear.

\begin{theorem}\label{Thm_A}
  Let $(X,\Phi)$ be a compact $\Spin(7)$--manifold.
  Suppose we are given:
  \begin{itemize}
  \item
    an (irreducible and) unobstructed $\Spin(7)$--instanton $A_0$ on a $G$--bundle $E_0$ over $X$,
  \item
    an unobstructed Cayley submanifold $Q$ and
  \item
    an unobstructed Fueter section $\fI$ of an instanton moduli bundle $\fM \to Q$ associated with $Q$ and $E_0|_Q$.
  \end{itemize}
  Then there exists a constant $\Lambda>0$ and a $G$--bundle $E$ together with a family of (irreducible and) unobstructed $\Spin(7)$--instantons $(A_\lambda)_{\lambda\in(0,\Lambda]}$ on $E$.
  Moreover, as $\lambda$ tends to zero $A_\lambda$ converges to $A_0$ on the complement of $Q$ and at each point $x\in Q$ an ASD instanton in the equivalence class given by $\fI(x)$ bubbles off transversely.
\end{theorem}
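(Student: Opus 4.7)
The plan is a parameter-dependent gluing argument, in the spirit of Donaldson--Kronheimer, Taubes and the author's earlier construction of $G_2$--instantons. I would construct a family of approximate Spin$(7)$--instantons $A_\lambda^{\rm app}$ of concentration scale $\lambda$ near $Q$, estimate the error by which $A_\lambda^{\rm app}$ fails to solve~\eqref{Eq_Spin7Instanton0}, produce a uniformly bounded right inverse for the linearisation, and conclude with a contraction mapping.

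\textbf{Pregluing.} Using the unobstructedness of $Q$ and a tubular neighbourhood of $Q$ in $X$ (parameterised by the normal bundle $\nu\to Q$), the data $\fI(x)$ (an equivalence class of ASD instantons on $N_xQ$) determines, after a choice of representative and framing, an ASD instanton $I(x,\cdot)$ on each $N_xQ$. Rescale it by $\lambda$ to obtain $I_\lambda(x,v):=I(x,v/\lambda)$ and trivialise $E_0$ along $\nu$ using parallel transport with respect to $A_0$. Splice $A_0$ and $I_\lambda$ via a cutoff function supported on the annulus $\set{c\lambda^{1/2}<|v|<2c\lambda^{1/2}}$ (the precise radius will be chosen to balance the two sides of the error estimate) to define a connection $A_\lambda^{\rm app}$ on a bundle $E$, obtained from $E_0$ and the bundle on $\nu$ pulled back from the moduli bundle $\fM$.

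\textbf{Error estimate.} The defect $e_\lambda:=\pi_7(F_{A_\lambda^{\rm app}})$, where $\pi_7$ is the projection onto the $+3$--eigenspace of $*(\cdot\wedge\Phi)$, is supported in the splicing annulus and in a region where one must account for the failure of $\Phi$ to agree with the flat Spin$(7)$--form on $\nu$, and for the fact that $A_0$ is not flat. A Taylor expansion of $\Phi$ around $Q$ plus the standard decay $|F_{I_\lambda}|=O(\lambda^2/|v|^4)$ yields, in a weighted norm $L^p_{-\delta,\lambda}$ adapted to the concentration, an estimate $\|e_\lambda\|\le c\lambda^{\alpha}$ for some $\alpha>0$. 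The weighted norms are crucial: they measure functions by their decay profile at $Q$ and re-scale to the flat model on $\nu$ when $\lambda\to0$.

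\textbf{Linear analysis.} Writing the linearisation at $A_\lambda^{\rm app}$ as $L_\lambda=d^*_{A_\lambda^{\rm app}}\oplus\pi_7 d_{A_\lambda^{\rm app}}$, I would construct a right inverse $R_\lambda$ uniformly bounded in the weighted norms. The idea is a parametrix built by an alternating (or pasting) method from right inverses for two model problems:
\begin{itemize}
\item the linearisation at $A_0$, which is invertible by unobstructedness of $A_0$;
\item the model operator on a neighbourhood of $Q$, which after rescaling decouples into the ASD deformation operator on the fibres of $\nu$ and the Fueter operator on sections of the linearised moduli bundle $T\fM\to Q$. The ASD operator on the fibres has kernel $T_{\fI(x)}\fM_x$ and no cokernel (by transversality for the moduli space), while the Fueter operator on $T\fM$ has no cokernel by unobstructedness of $\fI$; unobstructedness of $Q$ ensures that $\fM$ is a smooth bundle and identifies the relevant cokernel.
\end{itemize}
A uniform Schauder-type estimate on each model, together with a cutoff/patching argument, produces $R_\lambda$ with $\|R_\lambda\|\le c$ independent of $\lambda\in(0,\Lambda]$. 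This step is the main obstacle: one must carefully choose weighted spaces in which the model kernels and cokernels behave rigidly under rescaling, and one must control the error introduced by cutting off near the splicing annulus. Nothing here is conceptually new, but the bookkeeping between $L^p$ weights at the scale of $\lambda$ and at the macroscopic scale is delicate.

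\textbf{Nonlinear correction.} The Spin$(7)$--instanton equation for $A_\lambda^{\rm app}+a$ reads $L_\lambda a + e_\lambda + Q_\lambda(a)=0$, where $Q_\lambda(a)=\pi_7[a\wedge a]$ is quadratic. Setting $a=R_\lambda\xi$ reduces it to a fixed-point equation $\xi=-e_\lambda-Q_\lambda(R_\lambda\xi)$; since $Q_\lambda$ is quadratic and satisfies a bound $\|Q_\lambda(a)-Q_\lambda(a')\|\le c(\|a\|+\|a'\|)\|a-a'\|$ in the chosen norms, the Banach fixed-point theorem yields a unique small solution $\xi_\lambda$ with $\|\xi_\lambda\|\le 2\|e_\lambda\|=O(\lambda^\alpha)$, provided $\Lambda$ is small enough. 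Elliptic regularity upgrades $A_\lambda:=A_\lambda^{\rm app}+R_\lambda\xi_\lambda$ to a smooth Spin$(7)$--instanton, and an implicit function argument at $A_\lambda$ shows it is irreducible and unobstructed for $\lambda$ small. The bound $\|a_\lambda\|=O(\lambda^\alpha)$ in the weighted norm translates directly into the claimed convergence: $A_\lambda\to A_0$ smoothly on $X\setminus Q$, and the blow-up of $A_\lambda$ at scale $\lambda$ around any $x\in Q$ converges to a representative of $\fI(x)$ on $N_xQ$.
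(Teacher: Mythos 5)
Your proposal follows essentially the same route as the paper: pregluing $A_0$ with the rescaled fibrewise instantons determined by $\fI$, weighted norms adapted to the concentration scale, a right inverse for $L_\lambda$ obtained by splitting off the fibrewise ASD kernel directions (where the operator reduces to the linearised Fueter operator, surjective by unobstructedness of $\fI$) and patching model inverses on $\R^8$ and on $X\setminus Q$, then a quadratic estimate and Banach fixed point. The differences (weighted $L^p$ versus weighted Hölder spaces, a single splicing annulus at scale $\sqrt\lambda$ versus the paper's two-stage interpolation between $r\sim\lambda$ and $r\sim\zeta$) are implementation choices, not conceptual divergences.
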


\begin{remark}
  We define the concepts of instanton moduli bundles and Fueter sections thereof in \autoref{Sec_Fueter}.
  For now, it shall suffice to say that $\fM$ is a bundle of moduli spaces and a Fueter section of $\fM$ is a section which satisfies a non-linear p.d.e.~similar to a Dirac equation.

  Unobstructedness is best understood as a notion of being in general position;
  see \autoref{Def_Spin7InstantonUnobstructed}, \autoref{Def_CayleyUnobstructed} and \autoref{Def_FueterUnobstructed}.
\end{remark}

The proof of \autoref{Thm_A} is based on combining a gluing construction with adiabatic limit techniques.
The analysis involved is similar to unpublished work by Brendle on the Yang--Mills equation in higher dimension~\cites{Brendle2003,Brendle2003a} and Pacard--Ritor\'e's work on the Allen--Cahn equation~\cite{Pacard2003}.
The basic ideas, which are discussed briefly at the beginning of \autoref{Sec_Approx} and \autoref{Sec_LinearAnalysis}, are quite simple;
however, the reader should be warned that some of the precise technical details are quite delicate.

\autoref{Thm_A} can be used as a tool to construct examples of $\Spin(7)$--instantons.
A particularly interesting situation, where our result can be applied, is if $X$ has a suitable local $K3$ Cayley fibration.

\begin{theorem}
  \label{Thm_B}
  Let $X$ be a compact $\Spin(7)$--manifold with holonomy equal to $\Spin(7)$.
  Suppose that $Q$ is a Cayley submanifold in $X$ which has self-intersection number zero, is diffeomorphic to a $K3$ surface whose induced metric is sufficiently close to a hyperkähler metric and suppose that the induced connection on $NQ$ is almost flat.
  Then there exists a $5$--dimensional family of $\Spin(7)$--instantons on a $\SU(2)$--bundle $E$ over $X$ with $c_2(E) = \PD[Q]$.

  Moreover, if $Q_1,\ldots, Q_k$ is a collection of $k$ disjoint Cayley submanifolds as above, then there exists a $(8k-3)$--dimensional family of $\Spin(7)$--instantons on a $\SU(2)$--bundle $E$ over $X$ with $c_2(E) = \sum_{i=1}^k \PD[Q_i]$.
\end{theorem}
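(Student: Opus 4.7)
The plan is to apply \autoref{Thm_A} with background data $E_0 = X\times\SU(2)$ the trivial bundle, $A_0$ the product connection, and with the Cayley submanifold being the given $Q$ (or, in the multiple-Cayley case, $Q_1,\dots,Q_k$ by a direct extension of the theorem to disjoint Cayleys). The main work is then to verify the three unobstructedness hypotheses, construct the Fueter sections explicitly, and perform the parameter count.

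First I would check that $A_0$ is unobstructed. Since $X$ has holonomy equal to $\Spin(7)$, we have $b^1(X)=0$ and vanishing of the relevant harmonic pieces of $\Omega^\bullet(X)\otimes\fg$, so the trivial connection has no infinitesimal $\Spin(7)$-deformations and no obstructions in the sense needed by \autoref{Thm_A}, modulo its reducibility: the stabilizer of $A_0$ is the $3$-dimensional $\SU(2)\subset\cG$ of constant gauge transformations, which will be divided out at the end. Next, to verify that $Q$ is an unobstructed Cayley I would analyse the twisted Dirac operator governing Cayley deformations. In the model where $Q$ carries an exact hyperkähler metric and $NQ$ is flat and trivial, this operator decomposes compatibly with the hyperkähler structure and is manifestly surjective; the near-hyperkähler and almost-flat hypotheses then permit a standard perturbation argument to conclude surjectivity for the given~$Q$.

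The heart of the argument is the construction of the Fueter sections. Because $E_0|_Q$ is trivial with trivial connection, each fibre of $\fM\to Q$ is identified with the framed moduli space $\cM_1$ of charge-one ASD $\SU(2)$-instantons on $\R^4$, an $8$-dimensional hyperkähler manifold parametrized by centre ($4$), scale ($1$) and framing ($3$). Since $NQ$ is topologically trivial and almost flat, $\fM$ is close to the product bundle $Q\times\cM_1$, and on the exact-hyperkähler/flat model the Fueter equation reduces to a linear Dirac-type condition solved by every constant section, producing an $8$-dimensional family of Fueter sections. The hard part will be showing that this family persists and remains unobstructed under the allowed perturbation away from the hyperkähler/flat model; I expect this to follow from identifying the kernel and cokernel of the Fueter operator on the model via the hyperkähler identities and then invoking the implicit function theorem.

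With these inputs verified, \autoref{Thm_A} associates a $\Spin(7)$-instanton to each Fueter section, so varying the section yields an $8k$-parameter family for $k$ disjoint Cayleys (the gluing parameter $\lambda$ being absorbed into the overall scale direction of~$\cM_1$). Quotienting by the diagonal action of the $3$-dimensional stabilizer $\SU(2)$ of $A_0$ on the framings of the bubbles produces an $(8k-3)$-dimensional family of gauge-equivalence classes, specialising to $5$ when $k=1$; and the Chern class identity $c_2(E)=\sum_i\PD[Q_i]$ follows because each bubble contributes charge one localised transversely to its Cayley.
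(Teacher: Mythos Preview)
Your proposal is correct and follows essentially the same approach as the paper: take $A_0$ to be the product connection on the trivial $\SU(2)$--bundle (unobstructed since full holonomy forces $b^1=b^2_7=0$), verify $Q$ is an unobstructed Cayley via the Dirac operator on a near-hyperkähler $K3$ with almost-flat normal bundle, identify the charge-one instanton moduli bundle and its Fueter operator with the positive Dirac operator on $Q$ (surjective with four-dimensional kernel in the hyperkähler model, hence also after perturbation), and then apply \autoref{Thm_A}. Your dimension count $8k-3$ via the quotient by the $3$--dimensional stabiliser $\Gamma_{A_0}=\SU(2)$ is more explicit than the paper's terse treatment, but agrees with it (the paper records $\ind L_\theta=-3$ and the two four-dimensional kernels, implicitly invoking the index comparison of \autoref{Prop_IndexDifference}).
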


Here is a concrete example.

\begin{example}
  \citet[Example 14.3.3]{Joyce2000} gives an example of a $\Spin(7)$--manifold which contains two disjoint Cayley submanifolds $Q_1$ and $Q_2$ of the kind required by above.
  Applying \autoref{Thm_B} in this situation recovers the example of a $\Spin(7)$--instanton described in Lewis' DPhil thesis~\cite{Lewis1998}.
  In fact, it produces examples with $c_2(E) = n\PD[Q_1] + m\PD[Q_2]$ for arbitrary $n,m \in \N$ by taking the $Q_3,\ldots$ to be slight perturbations of $Q_1$ and $Q_2$ (which exist because $X$ is locally fibred near $Q_1$ and $Q_2$).
\end{example}

Every Cayley submanifold as above gives rise to a local fibration of $X$ by Cayley submanifolds, see \autoref{Prop_CayleyFibration};
hence, we can use \autoref{Thm_B} to produce large families of $\Spin(7)$--instantons.
This can be compared with the situation on negative definite four-manifolds~\cite{Taubes1982}, in which one can construct ASD instantons concentrated around any finite number of points.

Let us end the introduction on a speculative remark.
Suppose that $X$ is a compact $\Spin(7)$--manifold together with a fibration $\pi\co X\to B$ to a compact base whose generic fibre is a $K3$ Cayley submanifold.
In view of the above one could hope (very optimistically) that one can show that the moduli space $\sM$ of $\Spin(7)$--instantons on the $\SU(2)$--bundle $E$ obtained by applying \autoref{Thm_B} to a generic fibre of $\pi$ is smooth (or only mildly singular), $5$--dimensional and can be compactified by adding $B$ to the boundary.
Then we can use $\sM$ to construct a cobordism between $B$ and the link of the singular set of $\sM$ much as in the original proof of Donaldson's theorem \cite{Donaldson1983}.
In particular, if $\sM \cup B$ is smooth and compact, then $B$ is null-cobordant and, hence, $\sigma(B)=0$.
Although there are currently no known examples of $\Spin(7)$--manifolds with (singular) $K3$ Cayley fibrations, the above might serve as an indication of what could be achieved using gauge theory on $\Spin(7)$--manifolds.

%%% Local Variables: 
%%% mode: latex
%%% TeX-master: "icf"
%%% End: 

\paragraph{Acknowledgements}
This article is the outcome of work undertaken by the author for his PhD thesis at Imperial College London~\cite{Walpuski2013}, supported by European Research Council Grant 247331.
I am grateful to my supervisor Simon Donaldson for his encouragement.

\section{Review of \texorpdfstring{$\Spin(7)$}{Spin(7)}--geometry}
\label{Sec_Review}

We begin with a crash course in $\Spin(7)$--geometry, touching upon the basic concepts and facts relevant for this article.
For a more thorough and comprehensive discussion we refer the reader to Joyce's book \cite{Joyce2000}, specifically Chapter 10.

\subsection{\texorpdfstring{$\Spin(7)$}{Spin(7)}--manifolds}
\label{Sec_Spin7Manifolds}

In this section we approach $\Spin(7)$--geometry by thinking of the $4$--form $\Phi$, and not the metric, as the defining structure.
However, both points of view are essentially equivalent.

\begin{definition}
  A $4$--form $\Phi$ on an $8$--dimensional vector space $W$ is called \defined{admissible} if there exists a basis of $W$ in which it is identified with the $4$--form $\Phi_0$ on $\R^8$ defined by
  \begin{equation}\label{Eq_Phi0}
    \begin{split}
      \Phi_0
      &\coloneq e^{0123} - e^{0145} - e^{0167} - e^{0246} + e^{0257} - e^{0347} \\
      &\quad+ e^{4567} - e^{2367} - e^{2345} - e^{1357} + e^{1346} - e^{1256}.
    \end{split}
  \end{equation}
  Here we denote the standard basis of $(\R^8)^*$ by $(e^0,\ldots,e^7)$.
  The space of admissible forms on $W$ is denoted by $\sA(W)$.
\end{definition}

\begin{remark}
  An intrinsic characterisation of admissible forms can be found in \cite[Theorem 7.4 and Definition 7.5]{Salamon2010}.
\end{remark}

We use the following slightly unconventional definition, see~\autoref{Rmk_ConventionalSpin7} for the relation with the usual definition.

\begin{definition}
  $\Spin(7)$ is the subgroup of $\GL(\R^8)$ preserving the $4$--form $\Phi_0$ defined in \eqref{Eq_Phi0}.
\end{definition}

\begin{definition}
  A \defined{$\Spin(7)$--structure} on an $8$--dimensional manifold $X$ is an admissible $4$--form $\Phi\in\Gamma(\sA(TX))\subset \Omega^4(X)$.
  An $8$--manifold together with a $\Spin(7)$--structure is called an \defined{almost $\Spin(7)$--manifold}.
\end{definition}

\begin{prop}[{\cite[Theorem 9.1 and Theorem 7.4]{Salamon2010}}]
  $\Spin(7)$ is a simple, compact, connected and simply connected Lie group of dimension $21$.
  $\Spin(7)$ is a subgroup of $\SO(8)$.
\end{prop}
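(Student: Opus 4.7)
The plan is to proceed in three stages: first reduce to a subgroup of $\SO(8)$, then compute the dimension by a Lie-algebra calculation, and finally identify the group with the abstract spin group to get the topological properties.

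First I would show that every admissible $4$-form canonically determines an inner product and an orientation on the underlying $8$-dimensional vector space. One explicit way is to form the symmetric bilinear pairing
\[
  b(u,v) \cdot \vol \;:=\; (\iota_u \Phi_0)\wedge(\iota_v\Phi_0)\wedge\Phi_0 \;\in\; \Lambda^8(\R^8)^*,
\]
and check from the explicit expression \eqref{Eq_Phi0} that $b$ is a positive multiple of the standard inner product $g_0$; since $\Phi_0\wedge\Phi_0$ is a non-zero top-degree form, this also picks out an orientation. Consequently any element of $\GL(\R^8)$ preserving $\Phi_0$ preserves both $g_0$ and the orientation, giving the inclusion $\Spin(7)\subset\SO(8)$. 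Being defined by polynomial equations, $\Spin(7)$ is closed in $\SO(8)$ and hence compact.

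Next I would compute the dimension by looking at the infinitesimal action. Its Lie algebra is
\[
  \spin(7) \;=\; \ker\bigl(\rho \co \so(8)\to \Lambda^4(\R^8)^*,\quad A\mapsto A\cdot\Phi_0\bigr),
\]
where $A\cdot\Phi_0$ denotes the natural derivation of $\so(8)$ on $4$-forms. The form $\Phi_0$ is self-dual (a direct check from \eqref{Eq_Phi0}), and under $\SO(8)$ one has $\Lambda^4 = \Lambda^4_+\oplus\Lambda^4_-$. Using the known $\SO(8)$-decomposition of $\Lambda^4_+$, the image of $\rho$ turns out to lie in the $7$-dimensional irreducible summand of $\Lambda^4_+$ spanned by the $\SO(8)$-orbit of $\Phi_0$, and $\rho$ maps onto it. Thus $\dim\Spin(7) = \dim\so(8) - 7 = 28 - 7 = 21$, and $\spin(7)\cong\so(7)$ as abstract Lie algebras, which is simple.

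Finally, to obtain connectedness and simple-connectedness, I would match our group with the standard compact spin group via its real $8$-dimensional spinor representation. Realising the abstract $\Spin(7)$ inside $\Spin(8)$ as the stabiliser of a unit element in one of the half-spin representations, one constructs an explicit $\Spin(7)$-invariant $4$-form on $\R^8$, and a change of basis identifies it with $\Phi_0$. This produces a Lie group homomorphism from the abstract (connected, simply connected) $\Spin(7)$ into our $\Spin(7)$; matching dimensions (both $21$) and verifying triviality of the kernel (no non-trivial central element of the abstract spin group acts trivially on the spinor representation, since the non-trivial central element acts as $-1$) upgrades this to an isomorphism, delivering connectedness, simple-connectedness, and compactness simultaneously.

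The main obstacle is the dimension count together with the identification step: one must know enough about the $\SO(8)$-decomposition of $\Lambda^4$ to localise the image of $\rho$, and one must construct the $\Spin(7)$-invariant $4$-form on the spinor representation explicitly enough to match it with $\Phi_0$. Both are essentially representation-theoretic and are carried out in the cited references \cite{Salamon2010} and \cite{Joyce2000}*{Chapter 10}.
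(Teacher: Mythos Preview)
The paper itself gives no proof of this proposition; it is simply quoted from \cite{Salamon2010}. Your sketch is therefore not competing with anything in the paper, and its overall architecture (metric/orientation $\Rightarrow$ $\SO(8)$; Lie-algebra dimension count; identification with the abstract spin group) is exactly the route taken in the cited reference.

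There is, however, a genuine error in your first step. The expression
\[
  (\iota_u\Phi_0)\wedge(\iota_v\Phi_0)\wedge\Phi_0
\]
is a form of degree $3+3+4=10$ on an $8$--dimensional space, hence identically zero; it cannot recover the metric. (You are presumably thinking of the $\rG_2$ formula $(\iota_u\phi)\wedge(\iota_v\phi)\wedge\phi$, which has degree $2+2+3=7$.) In fact $\Phi_0$ is \emph{not} a stable form in Hitchin's sense---its $\GL(8,\R)$--orbit has dimension $64-21=43<70=\dim\Lambda^4$---so no simple polynomial expression in $\Phi_0$ alone produces $g_0$. Two standard fixes: either compute the infinitesimal stabiliser $\{A\in\gl(8):A\cdot\Phi_0=0\}$ directly from \eqref{Eq_Phi0} and observe by inspection that it lies in $\so(8)$ (which also gives the dimension $21$ in one stroke), or reverse the order of your argument and carry out the identification with the abstract $\Spin(7)$ via its spin representation \emph{first}, after which compactness, and hence preservation of a metric, follow for free. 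The remainder of your sketch (the $28-7$ dimension count via the $7$--dimensional summand of $\Lambda^4_+$, and the injectivity of the spin representation because $-1$ acts nontrivially) is correct.
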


It follows that each almost $\Spin(7)$--manifold is canonically equipped with a metric $g_\Phi$ and an orientation.

\begin{definition}
  Let $(X,\Phi)$ be an almost $\Spin(7)$--manifold.
  The \defined{torsion} of the $\Spin(7)$--structure $\Phi$ is defined to be
  \begin{equation*}
    \nabla_{g_\Phi} \Phi.
  \end{equation*}
  If $\nabla_{g_\Phi} \Phi=0$, then $\Phi$ is called \defined{torsion-free} and $(X,\Phi)$ is called a \defined{$\Spin(7)$--manifold}.
\end{definition}

Compact $\Spin(7)$--manifolds with $\Hol(g_\Phi) = \Spin(7)$ are difficult to come by.
Joyce has developed two construction techniques, which yield a good number of examples, see \cites{Joyce1996b,Joyce1999,Joyce2000}.

A very simple example of a $\Spin(7)$--manifold is $(\R^8, \Phi_0)$.
We will use this as a local model and it will be useful to realise it as a special case of the following examples.

\begin{example}
  \label{Ex_HyperkahlerPair}
  If $(S,\omega_1,\omega_2,\omega_3)$ and $(T,\mu_1,\mu_2,\mu_3)$ are a pair of hyperkähler surfaces, then $(S\times T,\Phi)$ with
  \begin{equation}
    \label{Eq_HyperkahlerPair}
    \Phi \coloneq \vol_S + \vol_T - \sum_{i=1}^3 \omega_i\wedge\mu_i 
  \end{equation}
  is a $\Spin(7)$--manifold.
\end{example}

\begin{example}
  \label{Ex_G2xLine}
  If $(Y,\phi)$ is a $\rG_2$--manifold, then $(\R\times Y,\Phi)$ with
  \begin{equation}
    \label{Eq_G2xLine}
    \Phi \coloneq \rd t\wedge \phi + \psi
  \end{equation}
  and $\psi \coloneq \Theta(\phi) = *_\phi\phi$ is a $\Spin(7)$--manifold.
\end{example}

Taking $S = T = \R^4$ with $(\omega_1,\omega_2,\omega_3)=(\mu_1,\mu_2,\mu_3)$ a positive orthonormal basis of $\Lambda^+ \coloneq \Lambda^+(\R^4)^*$ in \autoref{Ex_HyperkahlerPair} and $Y = \R^7$ with $\phi = e^{123} - e^{145} - e^{167} - e^{246} + e^{257} - e^{347}$ in \autoref{Ex_G2xLine} both recover $(\R^8,\Phi_0)$.

The following linear algebra fact can be seen as the $\Spin(7)$--analogue of $\Lambda^2 = \Lambda^+ \oplus \Lambda^-$, the splitting into (anti)-self-dual two-forms on $\R^4$.

\begin{prop}[{\cite[Theorem 9.5]{Salamon2010}}]
  \label{Prop_Lambda2Splitting}
  Let $\Phi$ be an admissible $4$--form on an $8$--dimensional vector space $W$.
  Then $\Lambda^2 W^*$ splits as follows
  \begin{equation*}
    \Lambda^2W^* = \Lambda^2_7\oplus\Lambda^2_{21}
  \end{equation*}
  with
  \begin{align*}
    \Lambda^2_7
      &\coloneq \left\{ \alpha : *(\alpha \wedge \Phi) = 3\alpha \right\} \qand \\
    \Lambda^2_{21}
      &\coloneq \left\{ \alpha : *(\alpha \wedge \Phi) = -\alpha \right\}
       \iso \spin(7).
  \end{align*}
\end{prop}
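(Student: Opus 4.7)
The plan is to observe that the linear operator $T\colon \Lambda^2W^* \to \Lambda^2W^*$ defined by $T(\alpha) := *(\alpha \wedge \Phi)$ is $\Spin(7)$-equivariant: the wedge product with $\Phi$ is equivariant because $\Phi$ is (by definition) $\Spin(7)$-invariant, and $*$ is equivariant because $\Spin(7)\subset\SO(8)$ preserves both the induced metric $g_\Phi$ and orientation. Hence $T$ respects any decomposition of $\Lambda^2W^*$ into $\Spin(7)$-subrepresentations, and by Schur's lemma acts as a scalar on each irreducible piece.

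Next, I would decompose $\Lambda^2W^*$ as a $\Spin(7)$-module. Via the metric, $\Lambda^2W^* \cong \so(W) \cong \so(8)$, and since $\Spin(7)\subset\SO(8)$ the adjoint representation gives an inclusion $\spin(7) \into \Lambda^2W^*$ as a $21$-dimensional $\Spin(7)$-subrepresentation. Since $\dim \Lambda^2W^* = 28$, there is a $7$-dimensional orthogonal complement, which one recognizes as the pullback of the standard vector representation of $\SO(7)$ along $\Spin(7)\to\SO(7)$; both summands are irreducible and non-isomorphic. Call the eigenvalues of $T$ on these pieces $\lambda_{21}$ and $\lambda_7$ respectively.

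To pin down the two eigenvalues, I would exploit the condition that $\alpha\in\spin(7)\subset\Lambda^2W^*$ corresponds exactly to skew-endomorphisms $A$ of $W$ for which $\exp(tA)$ preserves $\Phi_0$. Differentiating the invariance condition and rewriting the derivation action of $A$ on $\Phi$ in terms of wedge products and contractions yields the algebraic identity that membership in $\spin(7)$ is equivalent to $*(\alpha\wedge\Phi) = -\alpha$; this identifies $\lambda_{21} = -1$. For $\lambda_7$ I would either compute $\tr(T)$ directly in the standard basis using \eqref{Eq_Phi0} (the trace is readily seen to vanish since no monomial of $\Phi_0$ contributes a diagonal entry to $T$) and solve $21\lambda_{21} + 7\lambda_7 = 0$, or equivalently exhibit a distinguished element of the complement, e.g.\ $\alpha = e^{01}+e^{23}+e^{45}+e^{67}$, and verify $T\alpha = 3\alpha$ from \eqref{Eq_Phi0} by a short calculation.

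The main obstacle is the identification $\spin(7) = \{\alpha : *(\alpha\wedge\Phi) = -\alpha\}$: it requires a careful translation between the two realizations of $\alpha$ (as a two-form and as a skew-endomorphism) and a clean derivation of the $\Spin(7)$-invariance condition. Everything else---equivariance, the existence of the $21 \oplus 7$ decomposition, and the numerical determination of $\lambda_7$---reduces either to Schur's lemma or to a finite symbolic computation in the standard model $(\R^8,\Phi_0)$. Because admissibility means $\Phi$ is $\GL$-conjugate to $\Phi_0$, verifying the proposition on $(\R^8,\Phi_0)$ suffices.
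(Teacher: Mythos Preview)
The paper does not supply its own proof of this proposition; it is simply cited from \cite{Salamon2010}*{Theorem 9.5}. Your outline is a correct and standard way to establish the result, so there is nothing to compare against.

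One small remark: you implicitly use that $T$ is diagonalisable with real eigenvalues. This follows because $T$ is symmetric with respect to the inner product on $\Lambda^2W^*$, which in turn is a one-line consequence of the fact that $2$--forms commute under wedge product (so $\langle T\alpha,\beta\rangle\,\vol = \alpha\wedge\beta\wedge\Phi$ is symmetric in $\alpha,\beta$). With that in hand, your trace computation $21\lambda_{21}+7\lambda_7=0$ together with exhibiting a single explicit eigenvector (your $\alpha=e^{01}+e^{23}+e^{45}+e^{67}$ works) already pins down both eigenvalues, and you can avoid the more delicate infinitesimal-invariance argument for $\lambda_{21}=-1$ entirely if you prefer. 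The identification $\Lambda^2_{21}\cong\spin(7)$ then follows a posteriori from the representation-theoretic decomposition you already established.
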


\begin{remark}
  \label{Rmk_ConventionalSpin7}
  The action of $\Spin(7)$ on $\Lambda^2_7$ gives rise to a double cover $\Spin(7)\to\SO(7)$;
  hence, the above definition of $\Spin(7)$ agrees with the usual definition as the universal cover of $\SO(7)$.
\end{remark}

\autoref{Prop_Lambda2Splitting} induces an analogous splitting of $\Lambda^2T^*X$ for every almost $\Spin(7)$--manifold.
By slight abuse of notation we will denote the corresponding summands by $\Lambda^2_d$ as well.
We denote the projection onto $\Lambda^2_d$ by
\begin{align*}
  \pi_d\co \Lambda^2 T^*X \to \Lambda^2_d.
\end{align*}

The following propositions are easy to check via straight-forward computation.

\begin{prop}
  \label{Prop_Lambda2SplittingHyperkahlerPair}
  If $\Phi$ is the admissible $4$--form on a product of two quaternionic
  lines $S$ and $T$ defined as in \eqref{Eq_HyperkahlerPair}, then $\Lambda^2_7$ splits as
  \begin{equation*}
    \Lambda^2_{7} = \Lambda^2_3 \oplus \Lambda^2_4,
  \end{equation*}
  where
  \begin{gather*}
    \Lambda^2_3 = \bigoplus_{i=1}^3 \<\omega_i-\mu_i\> \qand  \\
    \Lambda^2_4 = \left\{ \<L\cdot,\cdot\> \in
      S^* \otimes T^* : L\in\Hom(S,T) ~\text{satisfying}~ \sum_{i} J_i L
      I_i = -3L \right\}.
  \end{gather*}
  Here $I_i$ and $J_i$ denote the complex structures on $S$ and $T$
  corresponding to $\omega_i$ and $\mu_i$ respectively.
\end{prop}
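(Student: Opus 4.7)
The plan is to decompose $\Lambda^2 W^*$ by bidegree with respect to the splitting $W = S \oplus T$,
\[
\Lambda^2 W^* = \Lambda^2 S^* \oplus (S^* \otimes T^*) \oplus \Lambda^2 T^*,
\]
and to diagonalise the operator $T_\Phi := *(\cdot \wedge \Phi)$ separately on each summand. Since the constituents $\vol_S$, $\vol_T$ and $\omega_i \wedge \mu_i$ of $\Phi$ have bidegrees $(4,0)$, $(0,4)$ and $(2,2)$, the operator $T_\Phi$ preserves the mixed summand $S^* \otimes T^*$ and sends $\Lambda^2 S^* \oplus \Lambda^2 T^*$ to itself, so the problem splits.

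On $\Lambda^2 S^* \oplus \Lambda^2 T^*$ I would refine via the self-dual/anti-self-dual splitting of each factor, noting $\Lambda^+ S^* = \langle \omega_1, \omega_2, \omega_3 \rangle$ and similarly for $T$. For $\alpha \in \Lambda^- S^*$ the identity $\alpha \wedge \omega_j = 0$ gives $\alpha \wedge \Phi = \alpha \wedge \vol_T$, so $T_\Phi \alpha = *_S \alpha = -\alpha$; the same argument applies to $\Lambda^- T^*$. Using $\omega_i \wedge \omega_j = 2 \delta_{ij} \vol_S$ (and analogously for $\mu_j$), a direct computation yields
\[
T_\Phi(\omega_i) = \omega_i - 2 \mu_i, \qquad T_\Phi(\mu_i) = \mu_i - 2 \omega_i,
\]
whose eigenvectors are $\omega_i + \mu_i$ with eigenvalue $-1$ and $\omega_i - \mu_i$ with eigenvalue $+3$. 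This identifies the $3$-dimensional piece $\Lambda^2_3 = \bigoplus_i \langle \omega_i - \mu_i \rangle$ inside $\Lambda^2_7$.

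On $S^* \otimes T^*$ I would use the metric isomorphism with $\Hom(S, T)$, sending $L$ to $\alpha_L$ defined by $\alpha_L(v,w) = \langle Lv, w \rangle$. Since $\alpha_L$ vanishes upon wedging with $\vol_S$ or $\vol_T$ for degree reasons, we have $\alpha_L \wedge \Phi = -\sum_i \alpha_L \wedge \omega_i \wedge \mu_i$, and a local computation in orthonormal frames adapted to $I_i$ and $J_i$ produces
\[
T_\Phi(\alpha_L) = -\alpha_{\sum_{i=1}^{3} J_i L I_i}.
\]
Hence the $+3$-eigenspace on $S^* \otimes T^*$ is exactly $\{\alpha_L : \sum_i J_i L I_i = -3L\}$, matching the definition of $\Lambda^2_4$ in the statement; this coincides with the space of $\H$-linear maps when $S$ and $T$ are regarded as left $\H$-modules via $I_i$ and $J_i$, which is $4$-dimensional (isomorphic to $\H$ via right multiplication). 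Counting dimensions gives $3 + 4 = 7 = \dim \Lambda^2_7$, and the sum is direct because the two summands live in different pieces of the bidegree decomposition.

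The main obstacle is the explicit derivation of $T_\Phi$ on $S^* \otimes T^*$: one must carefully track signs arising both from the Hodge star on the product and from the antisymmetrisation inherent in $\alpha_L$. As a consistency check, $\Sp(1)_S \times \Sp(1)_T$-equivariance of $T_\Phi$ forces the $+3$-eigenspace to be a single $4$-dimensional irreducible summand of $\Hom(\H, \H)$, so the answer is essentially pinned down by representation theory before the signs are explicitly verified.
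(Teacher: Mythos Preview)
Your proposal is correct and is precisely the kind of direct computation the paper has in mind; the paper itself gives no proof beyond the remark that the proposition is ``easy to check via straight-forward computation.'' Your bidegree decomposition, the explicit diagonalisation on $\Lambda^+S^*\oplus\Lambda^+T^*$, and the identification of the $+3$--eigenspace in $S^*\otimes T^*$ with the $\H$--linear maps via $T_\Phi(\alpha_L)=-\alpha_{\sum_i J_iLI_i}$ are exactly the ingredients needed, and your dimension and equivariance checks confirm the answer.
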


\begin{prop}
  \label{Prop_Lambda2SplittingG2xLine}
  If $\Phi$ is the admissible $4$--form on a product of $\R$ with a $7$--dimensional vector space $V$ equipped with a non-degenerate $3$--form $\phi$ defined as in \eqref{Eq_G2xLine}, then $\Lambda^2_7$ can be written as
  \begin{equation*}
    \Lambda^2_7 = \{ \rd t\wedge v^* + i(v)\phi : v \in  V \}
  \end{equation*}
  and $\Lambda^2_{21}$ can be written as
  \begin{equation*}
    \Lambda^2_{21} = \{ \rd t\wedge *_V(\alpha \wedge \psi) - \alpha : \alpha \in \Lambda^2 V^* \}
  \end{equation*}
  where $\psi : =\Theta(\phi)$.
\end{prop}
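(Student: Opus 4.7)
My plan is to verify the two formulas by a direct computation, using the product structure $\R \times V$ to reduce the $\Spin(7)$-eigenvalue equation $*(\beta \wedge \Phi) = \lambda\beta$ to equations in $V$ that can be solved using standard pointwise $\rG_2$-identities on $(V,\phi,\psi)$.

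First, I write any $\beta \in \Lambda^2(\R \oplus V)^*$ uniquely as $\beta = \rd t \wedge \gamma + \alpha$ with $\gamma \in V^*$ and $\alpha \in \Lambda^2 V^*$. Expanding $\beta \wedge \Phi$ against $\Phi = \rd t \wedge \phi + \psi$ and discarding the $\rd t \wedge \rd t$-term gives
\begin{equation*}
  \beta \wedge \Phi = \rd t \wedge (\gamma \wedge \psi + \alpha \wedge \phi) + \alpha \wedge \psi.
\end{equation*}
The Hodge star on the product obeys $*(\rd t \wedge \eta) = *_V \eta$ and $*\eta = (-1)^{\deg\eta} \rd t \wedge *_V\eta$ whenever $\eta$ is a pure $V$-form; hence
\begin{equation*}
  *(\beta \wedge \Phi) = *_V(\gamma \wedge \psi) + *_V(\alpha \wedge \phi) + \rd t \wedge *_V(\alpha \wedge \psi),
\end{equation*}
and the eigenvalue equation $*(\beta \wedge \Phi) = \lambda\beta$ decouples into a $V$-part $*_V(\gamma \wedge \psi) + *_V(\alpha \wedge \phi) = \lambda\alpha$ and a $\rd t$-part $*_V(\alpha \wedge \psi) = \lambda\gamma$.

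Next, I invoke the standard pointwise $\rG_2$-identities: the general formula $*_V(v^* \wedge \omega) = (-1)^{\deg\omega} i(v) *_V\omega$ specialises (using $*_V\phi = \psi$ and $*_V\psi = \phi$) to $*_V(v^* \wedge \psi) = i(v)\phi$ and $*_V(v^* \wedge \phi) = -i(v)\psi$; the $\rG_2$-decomposition $\Lambda^2 V^* = \Lambda^2_7 \oplus \Lambda^2_{14}$ diagonalises $\alpha \mapsto *_V(\alpha \wedge \phi)$ with eigenvalues $+2$ and $-1$, where $\Lambda^2_7 = \{i(v)\phi : v \in V\}$; and a short calculation---either direct, or by $\rG_2$-equivariance combined with a single test value---pins down $*_V(i(v)\phi \wedge \psi) = 3v^*$ together with $\alpha \wedge \psi = 0$ for $\alpha \in \Lambda^2_{14}$. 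Substituting $\beta = \rd t \wedge v^* + i(v)\phi$ (so $\gamma = v^*$ and $\alpha = i(v)\phi \in \Lambda^2_7$) into the decoupled equations now produces $*(\beta \wedge \Phi) = 3\beta$; substituting $\beta = \rd t \wedge *_V(\alpha \wedge \psi) - \alpha$ and decomposing $\alpha = \alpha_7 + \alpha_{14}$ produces $*(\beta \wedge \Phi) = -\beta$.

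Finally, the parametrising maps $v \mapsto \rd t \wedge v^* + i(v)\phi$ and $\alpha \mapsto \rd t \wedge *_V(\alpha \wedge \psi) - \alpha$ are manifestly injective (the $V$-component already recovers $v$, respectively $-\alpha$), so their images have dimensions $7$ and $21$ respectively; by \autoref{Prop_Lambda2Splitting} this forces equality with $\Lambda^2_7$ and $\Lambda^2_{21}$. The only real difficulty is bookkeeping the signs in the various $\rG_2$-identities and the Hodge-star formulas on the product, which depend on orientation conventions but contain no content beyond fixing them once and for all against the explicit $\Phi_0$.
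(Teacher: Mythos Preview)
Your proof is correct and is precisely the kind of direct verification the paper has in mind: the paper gives no proof of this proposition beyond the remark preceding it that it is ``easy to check via straight-forward computation.'' Your reduction of the $\Spin(7)$ eigenvalue equation to the two coupled $\rG_2$--identities on $V$, followed by the standard decomposition $\Lambda^2 V^* = \Lambda^2_7 \oplus \Lambda^2_{14}$ and the identities $*_V(\alpha\wedge\phi)=2\alpha$ on $\Lambda^2_7$, $*_V(\alpha\wedge\phi)=-\alpha$ and $\alpha\wedge\psi=0$ on $\Lambda^2_{14}$, together with $*_V(i(v)\phi\wedge\psi)=3v^*$, is exactly the intended computation, and the dimension count at the end cleanly finishes the argument.
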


\begin{prop}[{\cite[Proposition 10.5.6]{Joyce2000}}]
  If $\Phi$ is a $\Spin(7)$--structure on $X$, then $X$ is spin and
  has a canonical spin structure with
  \begin{equation*}
    \slS^+=\Lambda^0\oplus \Lambda^2_7 \qandq \slS^-=\Lambda^1_8.
  \end{equation*}
  Moreover, if $\Phi$ is torsion-free, then $X$ admits a non-trivial
  parallel spinor.
\end{prop}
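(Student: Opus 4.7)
The plan is to obtain the spin structure by lifting the $\Spin(7)$--structure along $\Spin(8)\to\SO(8)$, then to identify the half--spin bundles using the branching rules for $\Spin(7)\subset\Spin(8)$, and finally to read off the parallel spinor from a $\Spin(7)$--invariant vector.

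First, I would set up the spin structure. Since $\Spin(7)$ is simply connected (by the preceding proposition), the inclusion $\Spin(7)\hookrightarrow\SO(8)$ lifts uniquely to an inclusion $\Spin(7)\hookrightarrow\Spin(8)$ along the double cover $\Spin(8)\to\SO(8)$. The $\Spin(7)$--structure on $X$ corresponds to a principal $\Spin(7)$--subbundle $P$ of the oriented frame bundle; extending the structure group along this canonical lift produces a principal $\Spin(8)$--bundle $P\times_{\Spin(7)}\Spin(8)$, which is a spin structure refining the $\SO(8)$--frame bundle. This step is essentially formal.

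The substantive point is the representation--theoretic identification of $\slS^{\pm}$. Realising $\Spin(7)\subset\Spin(8)$ as the stabiliser of a unit vector in the positive half--spin representation $\Delta^+$ of $\Spin(8)$ (this is how the embedding arises from the lifting in the previous step), the branching rules read
\begin{equation*}
  \Delta^+|_{\Spin(7)} = \R \oplus V_7 \qandq \Delta^-|_{\Spin(7)} = V_8,
\end{equation*}
where $V_7$ is the unique irreducible $7$--dimensional representation of $\Spin(7)$ (the one coming from the double cover $\Spin(7)\to\SO(7)$, see \autoref{Rmk_ConventionalSpin7}) and $V_8$ is the $8$--dimensional representation inherited from $\Spin(7)\subset\SO(8)$. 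By \autoref{Prop_Lambda2Splitting}, the action of $\Spin(7)$ on $\Lambda^2_7$ is an irreducible $7$--dimensional representation, and by uniqueness $V_7\cong\Lambda^2_7$ as $\Spin(7)$--representations; similarly $V_8\cong\Lambda^1_8$. Passing to associated bundles yields the claimed decompositions $\slS^+=\Lambda^0\oplus\Lambda^2_7$ and $\slS^-=\Lambda^1_8$.

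Finally, when $\Phi$ is torsion--free, $\nabla_{g_\Phi}\Phi=0$, so the Levi--Civita connection reduces to a connection on the principal $\Spin(7)$--bundle $P$. The induced spin connection is then associated to this $\Spin(7)$--connection, and every $\Spin(7)$--invariant vector in $\Delta^+$ gives rise to a parallel section of $\slS^+$. The trivial summand $\R\subset\Delta^+$ produces a nowhere--vanishing parallel spinor, which under the identification above is simply the constant function $1\in\Gamma(\Lambda^0)\subset\Gamma(\slS^+)$. The main obstacle is really just the branching computation; once that is in hand the rest is bookkeeping, and a reader willing to take the triality/invariant--spinor viewpoint on granted can dispatch it in a few lines.
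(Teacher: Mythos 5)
Your argument is correct, and it is the standard proof. Note that the paper itself offers no proof of this statement---it is quoted verbatim from Joyce, \cite{Joyce2000}*{Proposition 10.5.6}---so there is no in-paper argument to compare against; what you have written is essentially Joyce's (and Lawson--Michelsohn's) reasoning: lift $\Spin(7)\hookrightarrow\SO(8)$ to $\Spin(8)$ using simple connectivity, branch the half-spin representations, and extract the parallel spinor from the trivial summand once $\nabla_{g_\Phi}\Phi=0$ forces the Levi--Civita connection to reduce to the $\Spin(7)$--subbundle. The only point worth flagging is your parenthetical identification of the lifted subgroup with the stabiliser of a unit spinor in $\Delta^+$ rather than $\Delta^-$: which half-spin representation acquires the trivial summand is a matter of orientation/triality conventions, and one should check it is consistent with the paper's convention that $\Lambda^2_7$ is the $+3$--eigenspace of $*(\cdot\wedge\Phi)$; with that convention fixed, your branching $\Delta^+|_{\Spin(7)}=\R\oplus V_7$, $\Delta^-|_{\Spin(7)}=V_8$ and the uniqueness of the $7$--dimensional irreducible representation of $\Spin(7)$ give exactly the claimed identifications.
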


\subsection{\texorpdfstring{$\Spin(7)$}{Spin(7)}--instantons}
\label{Sec_Spin7Instantons}

Throughout the remainder of this section we fix a $\Spin(7)$--manifold $(X, \Phi)$.
Also, let $G$ be a (compact semi-simple) Lie group and $E$ a $G$--bundle over a $\Spin(7)$--manifold.

\begin{definition}
  A connection $A\in \sA(E)$ on $E$ is called a \defined{$\Spin(7)$--instanton} if it satisfies
  \begin{equation*}
    *(F_A\wedge\Phi) = -F_A
  \end{equation*}
  or equivalently
  \begin{equation}
    \label{Eq_Spin7Instanton}
     \pi_7(F_A) = 0.
  \end{equation}
\end{definition}

This equation originated in the physics literature \cite{Corrigan1983} and was introduced to a wider mathematical audience by Donaldson--Thomas \cite[Section~3]{Donaldson1998}.
$\Spin(7)$--instantons were the topic of Lewis' DPhil thesis \cite{Lewis1998};
in particular, he proposed the construction of one non-trivial example on a $\SU(2)$--bundle over a $\Spin(7)$--manifold with full holonomy $\Spin(7)$, cf.~\autoref{Sec_CayleyFibration}.
Recently, a construction for $\Spin(7)$--instantons on $\Spin(7)$--manifolds
arising from \cite{Joyce1999} was given by \citet{Tanaka2012}.

For us the following ``trivial'' examples will play an important role.

\begin{example}
  In the situation of \autoref{Ex_HyperkahlerPair} if $I$ is an ASD instanton over $T$, then its pullback to $S \times T$ is a $\Spin(7)$--instanton.
\end{example}

\begin{example}
  \label{Ex_Spin7InstantonG2xLine}
  In the situation of \autoref{Ex_G2xLine} if $A$ is a $\rG_2$--instanton over $Y$, then its pullback to $\R \times Y$ is a $\Spin(7)$--instanton.
\end{example}

If $A\in\sA(E)$ is a connection on $E$, we define $L_A\co \Omega^1(X,\fg_E) \to \Omega^0(X,\fg_E) \oplus \Omega^2_7(X,\fg_E)$ by
\begin{equation}
  \label{Eq_Spin7InstantonLinearisation}
  L_A(a) \coloneq \(\rd_A^*a, \pi_7(\rd_Aa)\).
\end{equation}
This is the linearisation of \eqref{Eq_Spin7Instanton} supplemented with the Coulomb gauge condition;
it also agrees with the negative Dirac operator on $X$ twisted by $\fg_E$.

\begin{remark}
  \label{Rmk_Spin7InstantonG2xLine}
  In the situation of \autoref{Ex_Spin7InstantonG2xLine} denote the pullback of $A$ by $\bA$.
  Identifying $\Omega^1(X,\fg_E)$ with $\Omega^0(\R\times Y, \R\oplus p_2^*T^*Y)$ and $\Omega^0(X,\fg_E)\oplus\Omega^2_7(X,\fg_E)$ with $\Omega^0(\R\times Y, \R\oplus p_2^*T^*Y)$ using \autoref{Prop_Lambda2SplittingG2xLine}, we can write
  \begin{equation*}
    L_\bA = \del_t -
    \begin{pmatrix}
      0 & \rd_A^* \\
      \rd_A & *_Y(\psi\wedge\rd_A)
    \end{pmatrix}.
  \end{equation*}
  Note that the second term is nothing but the linearisation of the $\rG_2$--instanton equation at $A$, see~\cite[Section 3]{Walpuski2011}.
\end{remark}

\begin{prop}
  \label{Prop_Spin7InstantonIndex}
  If $A$ is a $\Spin(7)$--instanton, then there is an open subset $U\subset \ker L_A$ and a smooth map $\kappa\co U \to \coker L_A$ such that the moduli space of $\Spin(7)$--instantons near $A$ is homeomorphic to $\kappa^{-1}(0)/\Gamma_A$.
  Here $\Gamma_A\subset \sG(E)$ is the group of gauge transformations fixing $A$.
  The index of $L_A$ is given by
 \begin{equation}\label{eq:s7instanton_index}
   \begin{split}
     \ind L_A &= \dim \fg\cdot (b^1-b^0-b^2_7) \\
     &\quad    +\frac{1}{24}\int_X p_1(X)p_1(\fg_E)
     -\frac{1}{12}\int_X p_1(\fg_E)^2-2p_2(\fg_E).
   \end{split}
  \end{equation}
  If $E$ is a $\SU(r)$--bundle, then
  \begin{equation}\label{eq:s7instanton_index_sun}
    \begin{split}
    \ind L_A &=
    (r^2-1)(b^1-b^0-b^2_7) \\
    &\quad -
    \frac{r}{12} \int_X p_1(X)c_2(E) -
    \int_X \(1+\frac{r}{6}\)c_2(E)^2
    -\frac{r}{3}c_4(E).
    \end{split}
  \end{equation}
  Here $b^2_7$ is the refined second Betti number corresponding to $\Lambda^2_7$ in \autoref{Prop_Lambda2Splitting}, see \cite[Definition 10.6.3]{Joyce2000}.
\end{prop}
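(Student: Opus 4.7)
The statement breaks into two essentially independent parts: the Kuranishi model for the moduli space near $A$, and the computation of $\ind L_A$. For the first, my plan is to run the standard Lyapunov--Schmidt reduction inside the Coulomb slice $\rd_A^* a = 0$ through $A$. Writing $A_s = A + a$, the condition $\pi_7 F_{A_s} = 0$ together with the slice condition is equivalent to
\[
L_A(a) + \tfrac{1}{2}\bigl(0,\pi_7[a,a]\bigr) = 0,
\]
a smooth Fredholm equation with quadratic nonlinearity (Fredholmness follows from the identification of $L_A$ as an elliptic Dirac-type operator discussed below). Projecting onto $\ker L_A$ and its $L^2$--complement and applying the implicit function theorem yields the Kuranishi map $\kappa\co U\subset\ker L_A \to \coker L_A$ whose zero set models the moduli space locally; the fact that both the equation and the slice condition are preserved by gauge transformations fixing $A$ gives the $\Gamma_A$--quotient description.

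For the index I would identify $L_A$ as a twisted Dirac operator and apply Atiyah--Singer. The identifications $\slS^+ \iso \Lambda^0 \oplus \Lambda^2_7$ and $\slS^- \iso \Lambda^1$ recorded at the end of \autoref{Sec_Spin7Manifolds}, combined with the formula \eqref{Eq_Spin7InstantonLinearisation} (compare also \autoref{Rmk_Spin7InstantonG2xLine}), exhibit $L_A$ as, up to sign, the twisted Dirac operator $\slashed D_{\fg_E}\co \Gamma(\slS^-\otimes\fg_E)\to\Gamma(\slS^+\otimes\fg_E)$. Atiyah--Singer then gives
\[
\ind L_A \;=\; -\int_X \hat A(X)\,\ch(\fg_E\otimes_\R\C),
\]
and I would extract the degree--$8$ part using the expansions
\[
\hat A(X) = 1 - \tfrac{p_1(X)}{24} + \tfrac{7p_1(X)^2-4p_2(X)}{5760} + \cdots, \qquad \ch(\fg_E\otimes_\R\C) = \dim\fg + p_1(\fg_E) + \tfrac{p_1(\fg_E)^2-2p_2(\fg_E)}{12} + \cdots.
\]
The three contributions are: an ``untwisted'' term $-\dim\fg\cdot\hat A(X)[X]$, which equals $\dim\fg\cdot(b^1-b^0-b^2_7)$ since $\hat A(X)[X] = \ind\slashed D^+ = (b^0+b^2_7)-b^1$ (harmonic positive spinors correspond to $\cH^0\oplus\cH^2_7$ and harmonic negative spinors to $\cH^1$ under the identifications above); and the two remaining terms $\frac{1}{24}\int p_1(X)p_1(\fg_E) - \frac{1}{12}\int[p_1(\fg_E)^2-2p_2(\fg_E)]$ appearing directly from the degree count. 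This yields \eqref{eq:s7instanton_index}.

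For the $\SU(r)$ refinement I would use $\fg_E\otimes_\R\C = \End_0 E$ together with $c_1(E) = 0$. A splitting-principle calculation of $\ch(\End E) = \ch(E)\ch(E^*)$ gives $c_2(\End E) = 2r\,c_2(E)$ and $c_4(\End E) = (2r^2-r-6)\,c_2(E)^2 + 2r\,c_4(E)$, hence
\[
p_1(\fg_E) = -2r\,c_2(E), \qquad p_2(\fg_E) = (2r^2-r-6)\,c_2(E)^2 + 2r\,c_4(E).
\]
Substituting these into \eqref{eq:s7instanton_index}, and noting that $c_3(E)^2\in H^{12}(X)=0$ so any $c_3$ terms drop out, yields \eqref{eq:s7instanton_index_sun}. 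The main delicate point throughout is bookkeeping: fixing sign and orientation conventions so that $L_A$ really matches the stated twisted Dirac operator (and in the correct direction), and keeping the Pontryagin-to-Chern conversion consistent in the $\SU(r)$ step. Once these are nailed down, everything else is a direct expansion of characteristic classes.
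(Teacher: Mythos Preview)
Your proposal is correct and follows essentially the same route as the paper: the Kuranishi model is obtained by the standard Lyapunov--Schmidt reduction (which the paper simply declares ``standard''), and the index is computed by identifying $L_A$ with the negative twisted Dirac operator and expanding $-\int_X \hat A(X)\ch(\fg_E\otimes\C)$. For the $\SU(r)$ case the paper writes $\ch(\fg_E\otimes\C)=\ch(E\otimes E^*)-1$ directly rather than first extracting $p_1(\fg_E),p_2(\fg_E)$ as you do, but this is only a cosmetic reorganisation of the same splitting-principle calculation.
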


\begin{remark}
  The index formula given by Lewis \cite[Theorem 3.2]{Lewis1998} is incorrect.
  He mistakenly couples the Dirac operator to $E$ instead of $\fg_E$.
\end{remark}

\begin{proof}[Proof of the index formula]
  The existence of the Kuranishi map $\kappa$ is standard (see, e.g., \cite[Section 4.2]{Donaldson1990});
  we only prove the index formula.
  Using
  \begin{gather*}
    \ch_2(\fg_E\otimes \C)=-c_2(\fg_E\otimes \C) \qand \\
 \ch_4(\fg_E\otimes \C)=\frac{1}{12}(c_2(\fg_E\otimes \C)^2-2 c_4(\fg_E\otimes \C))
  \end{gather*}
  the index theorem yields
  \begin{align*}
    \ind L_A &= -\int_X \hat A(X) \ch(\fg_E\otimes \C)\\
    &=- \int_X
    \left(1-\frac{p_1(X)}{24}+\frac{7p_1(X)^2-4p_2(X)}{5670}\right)
    \\
    &\qquad\qquad\cdot
    \left(\dim \fg + p_1(\fg_E)
      +\frac{p_1(\fg_E)^2-2p_2(\fg_E)}{12} \right) \\
    &= \dim \fg\cdot (b^1-b^0-b^2_7) \\
    &\qquad
    +\frac{1}{24}\int_X p_1(X)p_1(\fg_E)
    -\frac{1}{12}\int_X p_1(\fg_E)^2-2p_2(\fg_E).
  \end{align*}
  In the last step, we applied the identity derived up to this point with $E$ the trivial line bundle to obtain
  \begin{equation*}
    b^0-b^1+b^2_7 
    = 
    \int_X \frac{7p_1(X)^2-4p_2(X)}{5670}.
  \end{equation*}
  If $E$ is a $\SU(r)$--bundle, then we can use
  \begin{equation*}
    \ch(\fg_E\otimes \C)=\ch(E\otimes E^*)-1
    =r^2-1 - 2rc_2(E) + \frac{6+r}{6} c_2(E)^2 - \frac{r}3 c_4(E).
    \qedhere
  \end{equation*}
\end{proof}

\begin{definition}
  \label{Def_Spin7InstantonUnobstructed}
  If $A$ is a $\Spin(7)$--instanton, then we denote by
  \begin{align*}
    \cH^0_A &\coloneq \ker L_A^* \cap \Omega^0(X,\fg_E),  \\
    \cH^1_A &\coloneq \ker L_A \cap \Omega^1(X,\fg_E) \qand \\
    \cH^2_{7;A} &\coloneq \ker L_A^* \cap \Omega^2_7(X,\fg_E)
  \end{align*}
  the \defined{space of infinitesimal automorphisms}, the \defined{space of infinitesimal deformations} and the \defined{space of infinitesimal obstructions} respectively.
  $A$ is called \defined{irreducible} if $\cH^0_A=0$ and \defined{unobstructed} if $ \cH^2_{7;A}=0$.
\end{definition}

\begin{remark}
  The above spaces can also be seen as the cohomology groups of the deformation complex
  \begin{equation*}
    0 \to \Omega^0(X,\fg_E) \xrightarrow{\rd_A} \Omega^1(X,\fg_E)  \xrightarrow{\pi_7\circ\rd_A} \Omega^2_7(X,\fg_E) \to 0.
  \end{equation*}
\end{remark}

\subsection{Cayley submanifolds}
\label{Sec_Cayley}

\begin{theorem}[{\citet[Chapter IV Theorem 1.24]{Harvey1982}}]
  \label{Thm_Cayley}
  If $(X,\Phi)$ is a $\Spin(7)$--manifold, then $\Phi$ is a calibration.
  Moreover, $Q\subset X$ is calibrated by $\Phi$ if and only of at each point $x\in Q$ there exists a basis $(e_0,\ldots,e_7)$ of $T_xX$ with respect to which $\Phi$ is given by \eqref{Eq_Phi0} and $(e_0,\ldots, e_3)$ is a positive basis of $T_xQ$.
\end{theorem}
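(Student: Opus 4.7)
The plan is to reduce the statement to a pointwise, linear-algebraic assertion about the standard $\Spin(7)$-structure on $\R^8$, then establish that via octonionic multiplication.

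First, recall that a calibration on a Riemannian manifold is a closed form $\Phi$ such that at each point $x$ and for every oriented unit simple $4$-vector $\xi = e_0\wedge e_1\wedge e_2\wedge e_3 \in \Lambda^4 T_xX$ one has $\Phi(\xi) \le 1$, with the calibrated oriented $4$-planes being those achieving equality. Closedness is immediate from the torsion-free hypothesis $\nabla_{g_\Phi}\Phi = 0$, since $\rd\Phi$ is the skew-symmetrization of $\nabla\Phi$. Thus the substantive content is the pointwise comparison and its equality case.

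Next I would reduce to the model $(\R^8,\Phi_0)$. Because $\Spin(7) \subset \SO(8)$ acts on $T_xX$ preserving $\Phi$ and the metric, and because by definition the $\Spin(7)$-structure is locally modelled on $\Phi_0$, it suffices to verify for every oriented orthonormal $4$-frame $(f_0,f_1,f_2,f_3)$ in $\R^8$ that $\Phi_0(f_0,f_1,f_2,f_3) \le 1$, with equality precisely when some element of $\Spin(7)$ carries $(f_0,f_1,f_2,f_3)$ to $(e_0,e_1,e_2,e_3)$.

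For the inequality itself, the cleanest route is to identify $\R^8$ with the octonions $\mathbb{O}$ and to introduce the triple cross product $u \times v \times w := \tfrac12(u(\bar v w) - w(\bar v u))$ (or a standard variant), which is normalized so that
\begin{equation*}
  \Phi_0(u,v,w,x) = \langle u\times v\times w,\, x\rangle.
\end{equation*}
For orthonormal $u,v,w$ one checks directly that $|u \times v\times w| \le 1$, with equality iff $\{u,v,w,u\times v\times w\}$ is an orthonormal set lying in an associative-type configuration; by Cauchy--Schwarz applied in the last slot, $\Phi_0(u,v,w,x)\le|u\times v\times w|\cdot|x|\le 1$, with equality forcing $x = u\times v\times w$.

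Finally, for the equality clause one shows that any oriented orthonormal $4$-frame $(f_0,\ldots,f_3)$ attaining $\Phi_0 = 1$ extends to a $\Spin(7)$-frame of $\R^8$. Fix $f_0$; the identification $\mathbb{O} \cong \R \oplus \Im\mathbb{O}$ via $f_0 \leftrightarrow 1$ sends the stabiliser of $f_0$ in $\Spin(7)$ onto $\rG_2 \subset \SO(\Im\mathbb{O})$ acting on the remaining $7$ directions. The orthogonality plus the equality condition in the Cauchy--Schwarz step force $(f_1,f_2,f_3)$ to span an \emph{associative} $3$-plane in $\Im\mathbb{O}$; since $\rG_2$ acts transitively on oriented associative $3$-planes (a standard fact), we can further rotate to bring $(f_1,f_2,f_3)$ to $(e_1,e_2,e_3)$. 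Then $(e_0,\ldots,e_3)$ and $(f_0,\ldots,f_3)$ are related by a $\Spin(7)$-transformation, giving the basis described in the theorem. The main obstacle is the octonionic computation of the triple cross product norm and, for the equality characterization, invoking the transitivity of $\rG_2$ on oriented associative $3$-planes; both are classical but need careful bookkeeping of signs to match the explicit formula \eqref{Eq_Phi0}.
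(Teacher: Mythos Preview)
The paper does not supply a proof of this theorem; it is simply quoted with attribution to Harvey--Lawson \cite{Harvey1982}. So there is no argument in the paper to compare against, and your proposal is essentially a sketch of the original Harvey--Lawson proof.

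Your outline is correct, but one step is garbled. You write that for orthonormal $u,v,w$ one has $|u\times v\times w|\le 1$ with equality only in an ``associative-type configuration''. In fact, for any orthonormal triple $u,v,w\in\mathbb{O}$ the triple cross product has norm \emph{exactly} $1$ and is automatically orthogonal to each of $u,v,w$; there is no equality case to analyse at that stage. The comass bound $\Phi_0(u,v,w,x)\le 1$ is then pure Cauchy--Schwarz in the last slot, with equality iff $x=u\times v\times w$. The substantive content---that $\Spin(7)$ acts transitively on such $4$--planes---is handled correctly in your final paragraph via the reduction to $\rG_2$ acting on associative $3$--planes in $\Im\mathbb{O}$. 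One further point worth making explicit: after moving $f_0$ to $e_0=1$, the fact that the residual $3$--plane $\mathrm{span}(f_1,f_2,f_3)\subset\Im\mathbb{O}$ is associative is not automatic from orthonormality alone; it follows from the equality $f_3=f_0\times f_1\times f_2$, which under $f_0=1$ reduces to $f_3=f_1 f_2$ (the associative condition). You allude to this but it deserves a sentence.
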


\begin{remark}
  Recall that a differential $k$--form $\alpha$ on a Riemannian manifold $(M,g)$ is called a calibration if it is closed and has comass at most 1, that is, $\rd \alpha = 0$ and for all orthogonal subset $\{e_1,\ldots,e_k\} \subset T_xM$ we have $\alpha(e_1,\ldots,e_k) \leq 1$.
\end{remark}

\begin{definition}
  Let $(X,\Phi)$ be a $\Spin(7)$--manifold.
  Then $\Phi$ is called the \defined{Cayley calibration}.
  An oriented $4$--dimensional submanifold $Q\subset X$ that is calibrated by $\Phi$ is called a \defined{Cayley submanifold}.
\end{definition}

If $Q\subset (X,\Phi)$ is a Cayley submanifold, then it follows from \autoref{Thm_Cayley} that there is a natural identification
\begin{equation}
  \label{Eq_Lambda+}
  \Lambda^+ T^*Q \iso \Lambda^+ N^*Q.
\end{equation}
We define a subbundle $\Hom_\Phi(TQ,NQ)\subset \Hom(TQ,NQ)$ by decreeing that $L \in \Hom_\Phi(TQ,NQ)$ if and only if
\begin{equation*}
  \sum_{i} I_i L I_i =-3L,
\end{equation*}
cf.~\autoref{Prop_Lambda2SplittingHyperkahlerPair}.
Here $I_i$ runs through a local orthonormal basis of $\Lambda^+ T^*Q \iso \Lambda^+ N^*Q$, which we can identify with subsets of $\so(TQ)$ and $\so(NQ)$. 
Up to multiplication by $\frac14$
\begin{equation*}
  \gamma L \coloneq L - \sum_{i} I_i L I_i
\end{equation*}
defines a projection of $\Hom(TQ,NQ)$ onto $\Hom_\Phi(TQ,NQ)$.
\begin{definition}
  The \defined{Fueter operator} $F_Q\co \Gamma(Q,NQ)\to \Gamma(Q,\Hom_\Phi(TQ,NQ))$ associated with $Q$ is defined by
  \begin{equation*}
    F_Q(n) \coloneq \gamma(\bar\nabla n).
  \end{equation*}
\end{definition}

\begin{remark}
  If $e_0$ is a vector in $TQ$, then one can compose $F_Q$ with evaluation on $e_0$ to obtain the operator
  \begin{equation*}
    \ev_{e_0} \circ F_Q (n) = \bar\nabla_{e_0} n
    - \sum_i I_i \bar\nabla_{e_i} n
  \end{equation*}
  where $e_i\coloneq I_i e_0$.
  It is therefore appropriate to think of $F$ as a Dirac-type operator.
\end{remark}

\begin{remark}
  \label{Rmk_CayleySpin}
  Suppose that $Q$ is spin and $\fs$ is a spin structure on $Q$.
  Then the normal bundle $NQ$ is also spin, since $X$ is;
  moreover, there is a spin structure $\fu$ on $NQ$ such that $\slS^+_Q=\slS^+_{NQ}$ because of \eqref{Eq_Lambda+}.
  If we set $U\coloneq\slS^-_{NQ}$, then it can be seen that $\Re(\slS^+_Q\otimes U) = NQ$, $\Re(\slS^-_Q\otimes U) = \Hom_\Phi(TQ,NQ)$ and that $F_Q$ agrees with the twisted Dirac operator $\slD\co \Gamma(\Re(\slS^+_Q\otimes U)) \to \Gamma(\Re(\slS^-_Q\otimes U))$.
  For more details we refer the reader to \cite[Section 6]{McLean1998} and \cite[Section 3.2]{Haydys2011}.
\end{remark}

\begin{theorem}[{\citet[Section 6]{McLean1998}}]
  \label{Thm_McLean}
  Let $(X,\Phi)$ be a compact $\Spin(7)$--manifold and let $Q\subset X$ be a compact Cayley submanifold.
  Then there is an open subset $\sO\subset \ker F_Q$ and a smooth map $\kappa\co \sO \to \coker F_Q$ such that the moduli space of Cayley submanifolds near $Q$ is homeomorphic to $\kappa^{-1}(0)$.
  The index of $F_Q$ is given by
  \begin{equation}
    \label{Eq_CayleyIndex}
    \ind F_Q = \frac{\sigma(Q) + \chi(Q)}{2} - [Q]\cdot[Q].
  \end{equation}
  Here $\sigma(Q) : =b^+(Q)-b^-(Q)$ denotes the signature of $Q$.
\end{theorem}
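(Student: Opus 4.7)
The plan is to treat the Kuranishi structure and the index formula separately. For the Kuranishi structure I would follow the standard deformation-theoretic approach in calibrated geometry. Use the exponential map along $NQ$ to parametrise nearby submanifolds as normal graphs $Q_n := \{\exp_q(n(q)) : q \in Q\}$, for $n \in \Gamma(NQ)$ small in $C^1$. The tangent-plane characterisation of \autoref{Thm_Cayley}, combined with \autoref{Prop_Lambda2SplittingHyperkahlerPair} applied pointwise to the hyperkähler splitting $T_qX \cong T_qQ \oplus N_qQ$, encodes the Cayley condition on $Q_n$ as the vanishing of a smooth non-linear map $\cF\co \Gamma(NQ) \to \Gamma(\Hom_\Phi(TQ,NQ))$ satisfying $\cF(0)=0$ and $\rd\cF|_0 = F_Q$. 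Since $F_Q$ is elliptic (as noted in the remark following its definition), a standard implicit-function-theorem argument on the $L^2$-orthogonal complements of $\ker F_Q$ and $\coker F_Q$ produces the smooth Kuranishi map $\kappa\co \sO \subset \ker F_Q \to \coker F_Q$ whose zero set locally parametrises the nearby Cayley submanifolds.

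For the index formula, use \autoref{Rmk_CayleySpin} to identify $F_Q$ with the twisted Dirac operator $\slD_U$ on $\Re(\slS^+_Q \otimes U)$ with $U := \slS^-_{NQ}$. Because the real structure is preserved, $\ind_\R F_Q = \ind_\C \slD_U$, and Atiyah--Singer gives
\begin{equation*}
  \ind F_Q = \int_Q \hat A(TQ) \, \ch(U).
\end{equation*}
On a $4$-manifold $\hat A(Q) = 1 - p_1(Q)/24$ in degrees $\leq 4$, and the standard identities $c_2(\slS^\pm_V) = -(p_1(V) \pm 2e(V))/4$ for any oriented rank-$4$ real spin bundle $V$ give $\ch(U) = 2 + (p_1(NQ) - 2e(NQ))/4$. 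Expanding the integrand in top degree reduces $\ind F_Q$ to a combination of $p_1(Q)[Q]$, $p_1(NQ)[Q]$, and $e(NQ)[Q]$. Plugging in the Hirzebruch signature formula $p_1(Q)[Q] = 3\sigma(Q)$, the self-intersection identity $e(NQ)[Q] = [Q]\cdot[Q]$, and --- crucially --- the Cayley relation $p_1(NQ) = p_1(Q) + 2\chi(Q) - 2[Q]\cdot[Q]$ obtained by applying the $c_2(\slS^+)$ formula to the isomorphism $\slS^+_{NQ} \cong \slS^+_{TQ}$ of \eqref{Eq_Lambda+} and \autoref{Rmk_CayleySpin}, the answer collapses to $(\sigma(Q) + \chi(Q))/2 - [Q]\cdot[Q]$.

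The main obstacle is the non-linear identification $\rd\cF|_0 = F_Q$: one must carefully expand the tilt of $T_{\exp n(q)} Q_n$ as a first-order perturbation of $T_qQ$ inside $T_{\exp n(q)} X$, transport back by parallel translation along the geodesic, and verify that the leading term after projection onto the $\Hom_\Phi$-summand is precisely $\gamma(\bar\nabla n)$. Everything else is either standard elliptic theory or characteristic-class bookkeeping, once the delicate spin-structure compatibility asserted in \autoref{Rmk_CayleySpin} is granted.
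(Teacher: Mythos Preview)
The proposal is correct and, for the index formula, takes essentially the same approach as the paper: identify $F_Q$ with the twisted Dirac operator $\slD_U$ via \autoref{Rmk_CayleySpin}, apply Atiyah--Singer, and use the Cayley identity $\slS^+_{NQ}\cong\slS^+_{TQ}$ to convert $p_1(NQ)$ into $\sigma(Q)$, $\chi(Q)$ and $[Q]\cdot[Q]$. The paper in fact omits the Kuranishi part entirely (deferring to McLean) and only proves the index formula, so your normal-graph/implicit-function-theorem sketch supplies more than the paper itself does.
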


\begin{remark}
  The index formula given by \citet[Equation (10.32)]{Joyce2000} is incorrect and likely a misprint as it also contradicts his remarks at the bottom of p.\,267.
\end{remark}

\begin{definition}
  \label{Def_CayleyUnobstructed}
  A Cayley submanifold $Q$ is called \defined{unobstructed} if $F_Q$ is surjective.
\end{definition}

\begin{proof}[Proof of the index formula]
  We can assume that $Q$ is spin.
  Then the index of $F_Q$ agrees with the index of the twisted Dirac operator $\slD_U$.
  By the Atiyah--Singer index theorem
  \begin{equation*}
    \ind \slD_U = \int_Q \hat A(Q) \ch_2(U) = -\frac{1}{4} \sigma(Q) -
    \int_Q c_2(U).
  \end{equation*}
  This is the formula given by McLean.
  In order to obtain a more useful expression, we make use of the fact that if $E$ and $F$ are a pair of $\SU(2)$--bundles over a $4$--manifold and $V=\Re(E\otimes F)$, then
  \begin{equation}
    \label{Eq_ep1c2}
    \begin{split}
      e(V) & =c_2(F)-c_2(E) \qand \\
      p_1(V) &= -2(c_2(E)+c_2(F)).
    \end{split}
  \end{equation}
  To see this, note that there must be universal formulas of the form $e(V)=\alpha(c_2(E)-c_2(F))$ and $p_1(V)=\beta(c_2(E)+c_2(F))$, because $e(V)$ changes sign when $E$ and $F$ are interchanged since this changes the orientation on $V$, and $p_1(V)$ is independent of the order of $E$ and $F$.
  The constants can be determined by a simple explicit computation for the spin bundles over $K3$.
  From these formulae it follows that
  \begin{equation*}
    c_2(U) = -\frac14\(p_1(NQ)-2e(NQ)\).
  \end{equation*}
  To compute $p_1(NQ)$, we combine $\slS^+_Q=\slS^+_{NQ}$ and \eqref{Eq_ep1c2} to obtain
  \begin{equation}
    \label{Eq_C2SpinorBundle}
    p_1(NQ)+2e(NQ) = -4c_2(\slS^+_{NQ}) = -4c_2(\slS^+_{Q}) = p_1(Q)+2e(Q);
  \end{equation}
  hence,
  \begin{equation}
    \label{Eq_p1nq}
    \int_Q p_1(NQ) = 3\sigma(Q)+2\chi(Q)-2[Q]\cdot[Q].
  \end{equation}
  Therefore,
  \begin{equation*}
    \int_Q c_2(U) = -\frac34\sigma(Q) - \frac12\chi(Q) + [Q]\cdot[Q],
  \end{equation*}
  which implies the claimed index formula.
\end{proof}

\begin{prop}
  \label{Prop_CayleyFibration}
  Let $X$ be a compact $\Spin(7)$--manifold.
  Suppose that $Q$ is a compact Cayley submanifold in $X$ which has self-intersection number zero, is diffeomorphic to a $K3$ surface whose induced metric is sufficiently close to a hyperkähler metric and suppose that the induced connection on $NQ$ is almost flat.
  Then $X$ is locally fibred by Cayley $K3$ surfaces near $Q$.
\end{prop}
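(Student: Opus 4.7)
The plan is to apply \autoref{Thm_McLean} to produce a smooth $4$--dimensional moduli space $\cM$ of Cayley deformations of $Q$, and then to show that the natural evaluation map from $\cM\times Q$ foliates a neighbourhood of $Q$ in $X$. As a quick sanity check: for $Q$ diffeomorphic to a $K3$ surface one has $\sigma(Q)=-16$ and $\chi(Q)=24$, so the hypothesis $[Q]\cdot[Q]=0$ combined with \eqref{Eq_CayleyIndex} yields $\ind F_Q = (-16+24)/2 = 4$, exactly the number of parameters needed to fibre a neighbourhood of a $4$--dimensional submanifold in the $8$--dimensional $X$.

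The heart of the argument is an analysis of $F_Q$ in the \emph{model case} where the induced metric on $Q$ is exactly hyperkähler and the induced connection on $NQ$ is flat. By \autoref{Rmk_CayleySpin}, $F_Q$ then identifies with the twisted Dirac operator $\slD_U$ on $\Re(\slS^+_Q\otimes U)=NQ$ with $U=\slS^-_{NQ}$. Hyperkählerness trivialises $\Lambda^+T^*Q$, hence $\su(\slS^+_Q)$, and so (since $K3$ is simply connected and spin) $\slS^+_Q$ itself admits a parallel global frame; flatness of $NQ$ yields the analogous frame for $U$. Tensoring and taking the real part produces a $4$--real-dimensional space of parallel sections of $NQ$, all lying in $\ker F_Q$ and spanning $N_xQ$ pointwise at every $x\in Q$. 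Comparing with $\ind F_Q = 4$ forces $\dim\ker F_Q = 4$ and $\coker F_Q = 0$.

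To pass from this model to the actual hypotheses I would run a standard perturbation argument: closeness of the induced metric to a hyperkähler one and near-flatness of the normal connection translate into $F_Q$ being close in operator norm, between suitable Sobolev completions, to its model counterpart. Upper-semicontinuity of the cokernel, together with stability of the Fredholm index, gives $\coker F_Q = 0$ and $\dim\ker F_Q = 4$; in other words $Q$ is unobstructed in the sense of \autoref{Def_CayleyUnobstructed}, and \autoref{Thm_McLean} provides a smooth $4$--dimensional local moduli space $\cM$ with $T_{[Q]}\cM\iso \ker F_Q$.

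Finally I would upgrade $\cM$ to a fibration by studying the evaluation map $\cE\co \cM\times Q\to X$ sending $([Q'],x)$ to the point of $Q'$ lying over $x$ under the graph identification with a small normal section. Its differential at $([Q],x)$ reads $(\dot n,v)\mapsto \dot n(x)+v\in N_xQ\oplus T_xQ = T_xX$, which is an isomorphism precisely when $\ev_x\co \ker F_Q\to N_xQ$ is surjective. In the model this is immediate from the parallel frame; being an open condition on a $4\times 4$ matrix-valued family, it persists after the perturbation, and the inverse function theorem turns $\cE$ into a local diffeomorphism whose $\cM$--level sets are the Cayley leaves. The main obstacle in this plan is the quantitative side of the perturbation: one needs elliptic estimates showing that the perturbed $\ker F_Q$ stays close to the parallel kernel in a norm strong enough to preserve pointwise surjectivity of $\ev_x$ — the index-theoretic stability on its own guarantees only that the dimensions match.
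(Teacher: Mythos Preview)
Your argument is correct and follows the same line as the paper's: identify $F_Q$ with $\slD_U$, show in the hyperkähler/flat model that the kernel is $4$--dimensional and consists of nowhere-vanishing (parallel) sections, perturb, and invoke McLean. The paper adds one preliminary step you leave implicit, namely the topological triviality of $NQ$ (deduced from $e(NQ)=[Q]\cdot[Q]=0$ together with \eqref{Eq_p1nq} and \eqref{Eq_ep1c2}), which is what guarantees your flat model case actually exists; conversely, your evaluation-map treatment of the fibration step is more explicit than the paper's bare reference to the proof of \autoref{Thm_McLean}.
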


\begin{proof}
  Using the fact that $Q$ and hence $NQ$ is spin as well as \eqref{Eq_ep1c2} one can show that $NQ$ is trivial.
  The Fueter operator $F_Q$ thus agrees with the Dirac operator $\slD_U\co \Gamma(\Re(\slS^+\otimes U))\to\Gamma(\Re(\slS^-\otimes U))$.
  On a hyperkähler $K3$ surface the untwisted Dirac operator $\slD$ is surjective, has a four-dimensional kernel, and every non-zero element of $\ker \slD$ is nowhere vanishing;
  hence, the same is true for $\slD_U$ because the  metric on $Q$ is
  sufficiently close to a hyperkähler metric and the connection on $U$ is almost flat.
  The existence of the local fibration now follows from (the proof of) \autoref{Thm_McLean}.
\end{proof}

%%% Local Variables: 
%%% mode: latex
%%% TeX-master: "icf"
%%% End: 

\section{Moduli spaces of ASD instantons over \texorpdfstring{$\R^4$}{R4}} 
\label{Sec_ModuliOfASDInstantons}

This section is intended to remind the reader of some basic facts about ASD instantons over $\R^4$, all of which are completely classical and most of which can be found in Donaldson--Segal~\cite[Section 6.1]{Donaldson2009}.

Fix a $G$--bundle $E$ over $S^4=\R^4\cup\{\infty\}$.
Denote by $M$ the moduli space of ASD instantons on $E$ framed over the point at infinity, i.e.,
\begin{equation*}
  M(E) \coloneq \set{ A \in \sA(E) : F_A^+ = 0 }/\sG_0.
\end{equation*}
Here $\sA(E)$ denotes the space of connections on $E$ and
\begin{equation*}
  \sG_0(E) \coloneq \set{ g \in \sG(E) : g|_{E_\infty} = \id }
\end{equation*}
denotes the based gauge group.
These moduli spaces are smooth manifolds, because ASD instantons over $S^4$ are always unobstructed as a consequence of the Weitzenböck formula, see, e.g., \cite[Proposition~2.2]{Taubes1982}.
By Uhlenbeck's removable singularities theorem \cite[Theorem~4.1]{Uhlenbeck1982} we can think of $M$ as a \defined{moduli space of framed finite energy ASD instantons} on $\R^4$.
In a suitable functional analytic setup incorporating decay conditions at infinity, see, e.g., \cite{Taubes1983} or \cite{Nakajima1990}, the infinitesimal deformation theory of a framed ASD instanton $I$ over $\R^4$ is governed by the linear operator $\delta_I \co \Omega^1(\R^4,\fg_E) \to \Omega^0(\R^4,\fg_E) \oplus \Omega^+(\R^4,\fg_E)$ defined by
\begin{align}
  \label{Eq_DeltaI}
  \delta_I a \coloneq (\rd_I^*a,\rd_I^+a).
\end{align}
From the work of Taubes \cite{Taubes1983} it is known that $\delta_I$ is always surjective and that its kernel lies in $L^2$.
More precisely, we have the following result whose proof can be found, e.g., in \cite[Proposition~5.10]{Walpuski2011}.

\begin{prop}
  \label{Prop_ASDDecay}
  Let $E$ be a $G$--bundle over $\R^4$ and let $I\in\sA(E)$ be a finite energy ASD instanton on $E$.
  Then the following holds.
  \begin{enumerate}
  \item
    If $a\in\ker\delta_I$ decays to zero at infinity, that is to say $\lim_{r\to\infty} \sup_{\del B_r(0)} |a|=0$, then $|\nabla^k a|=O(r^{-3-k})$ for $k \geq 0$.
    Here $r\co\R^4\to[0,\infty)$ denotes the radius function $r(x)\coloneq |x|$.
  \item
    If $(\xi,\omega)\in\ker\delta_I^*$ decays to zero at infinity, then $(\xi,\omega)=0$.
  \end{enumerate}
\end{prop}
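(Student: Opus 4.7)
The plan is to deduce both statements from a Weitzenb\"ock formula combined with Uhlenbeck's removable singularities theorem, the latter allowing us to convert the problem on $\R^4$ into an elliptic problem on $S^4$ via the conformal inversion $\phi(y)=y/|y|^2$.

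For part (ii), I would expand $\delta_I\delta_I^*(\xi,\omega) = \bigl(d_I^*d_I\xi + (d_I^*)^2\omega,\; d_I^+d_I\xi + d_I^+d_I^*\omega\bigr)$ and observe that both cross terms vanish: $d_I^+d_I\xi = \pi_+[F_I,\xi]=0$ because $F_I^+=0$, while $(d_I^*)^2\omega$, viewed as the adjoint of $\xi\mapsto[F_I,\xi]$, depends on the $\Lambda^2$-pairing between the self-dual form $\omega$ and the anti-self-dual curvature $F_I$, which is zero. The remaining operator is the square of a twisted Dirac operator; on the flat four-dimensional base with an ASD twist, its Weitzenb\"ock formula collapses to $\nabla_I^*\nabla_I$, since the scalar curvature of $\R^4$ and $F_I^+$ both vanish. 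Hence any $(\xi,\omega)\in\ker\delta_I^*$ is a covariantly harmonic section, and the Kato inequality then gives $\Delta|(\xi,\omega)|^2 = 2|\nabla_I(\xi,\omega)|^2\geq 0$. Subharmonicity combined with the decay hypothesis and the maximum principle on exhausting balls forces $(\xi,\omega)\equiv 0$.

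For part (i), I would exploit the conformal invariance of both the ASD equation and the gauge-fixed deformation operator $\delta_I$ in dimension four. Under the conformal inversion $\phi$, Uhlenbeck's removable singularities theorem \cite{Uhlenbeck1982} extends the finite-energy ASD connection $I$ to a smooth ASD connection $\hat I$ on a $G$--bundle $\hat E\to S^4$, and the pullback $\hat a:=\phi^*a$ (with the appropriate conformal weight for 1-forms) is a 1-form on $S^4\setminus\{p\}$ satisfying the elliptic equation $\delta_{\hat I}\hat a=0$. The decay hypothesis on $a$, combined with the known pointwise decay $|F_I|=O(r^{-4})$ for finite-energy ASD instantons in a good gauge and a Green's-function bootstrap on $\R^4$, gives sufficient integrability of $\hat a$ on a punctured neighbourhood of $p$ that the removable singularity theorem for linear elliptic systems extends $\hat a$ smoothly to $p$. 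Since $|a|\to 0$ at infinity, the extension vanishes at $p$. Undoing the conformal weight for 1-forms in four dimensions then translates the vanishing of $\hat a(p)$ into $|a|=O(r^{-3})$, and applying the same argument to the covariant derivatives of $\hat a$ (which are smooth as well) yields $|\nabla^k a|=O(r^{-3-k})$.

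The main obstacle will be organising the bootstrap argument in part (i). The hypothesis $|a|\to 0$ is not by itself sufficient to apply the removable singularity theorem to $\hat a$ directly, so one must first use the ellipticity of $\delta_I$ and the decay of the potential-type term $F_I\cdot a$ arising from the Weitzenb\"ock identity on $\ker\delta_I$, combined with the $|x-y|^{-2}$ fundamental solution of the Laplacian on $\R^4$, to iteratively improve the decay from $|a|=o(1)$ through $O(r^{-2})$; only then does the conformally inverted section sit in an appropriate Sobolev space near $p$, at which point the standard removable singularity theorem and Schauder elliptic regularity finish the argument.
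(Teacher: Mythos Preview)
The paper does not give its own proof of this proposition; it defers to \cite{Walpuski2011}*{Proposition~5.10}. Your outline is essentially correct and follows a standard route.

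Part~(ii) is clean: the Weitzenb\"ock formula for $\delta_I\delta_I^*$ on $\Omega^0\oplus\Omega^+$ reduces to the rough Laplacian because the only curvature contribution involves $F_I^+=0$, and Kato plus the maximum principle on exhausting balls finish the job.

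For part~(i) the conformal-compactification strategy is sound, but one point about the weights deserves sharpening, and the bootstrap you worry about is not actually needed. Regard $\delta_I$ as the twisted negative Dirac operator; its conformal covariance in dimension four says that if $g_E=\lambda^2 g_{S^4}$ with $\lambda\sim r^2$ near infinity and $\delta_I a=0$, then $\hat\psi:=\lambda^{3/2}a$ lies in the kernel of the $g_{S^4}$--Dirac operator on $S^4\setminus\{p\}$, whose coefficients are smooth across $p$ by Uhlenbeck's theorem. In coordinates $y$ centred at $p$ one has $|y|\sim r^{-1}$, and the bare hypothesis $|a|\to 0$ already yields $|\hat\psi|=o(|y|^{-3})$. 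Since the fundamental solution of a first-order elliptic operator in dimension four behaves like $|y|^{-3}$, this growth bound alone forces the distributional equation to hold across $p$, so $\hat\psi$ extends smoothly with no prior decay improvement required. The conclusion $|a|=O(r^{-3})$ then follows from mere \emph{boundedness} of $\hat\psi$ near $p$ rather than vanishing, and boundedness of its covariant derivatives on $S^4$ yields $|\nabla^k a|=O(r^{-3-k})$. Your proposed Green's-function iteration to reach $O(r^{-2})$ first would also work, but it is an unnecessary detour once the conformal weight is chosen correctly.
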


In particular, this implies $M$ can be equipped with the $L^2$--metric arising from the standard metric on $\R^4$.
Any self-dual $2$--form $\omega \in S(\Lambda^+)$ of unit length, determines a complex structure $J_\omega$ on $\R^4$ via $\Lambda^2(\R^4)^* \iso \so(4)$.
This makes $\R\oplus \Lambda^+$ into an algebra, which is abstractly isomorphic to the quaternions $\H$.
A key fact is that $\delta_I$ commutes with the action of this algebra \cite[Proof of Theorem 3.2]{Taubes1983};
hence, $T_{[I]}M = \ker \delta_I \subset \Omega^1(\R^4,\fg_E)$ is preserved.

\begin{prop}
  The $L^2$--metric and the complex structures $\{ J_\omega : \omega \in S(\Lambda^+) \}$ define a hyperkähler structure on $M$.
\end{prop}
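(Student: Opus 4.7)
The plan is to realise $M$ as an (infinite-dimensional) hyperkähler quotient and invoke the general construction of Hitchin--Karlhede--Lindström--Roček, which simultaneously produces the metric, the three complex structures, closedness of the associated Kähler forms and their integrability.

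First I would fix an oriented orthonormal basis $\omega_1,\omega_2,\omega_3$ of $\Lambda^+(\R^4)^*$ and observe that the corresponding complex structures $J_1,J_2,J_3$ on $\R^4$ satisfy the quaternion relations, so that $\R\oplus\Lambda^+\iso\H$ acts on $\Omega^1(\R^4,\fg_E)$ pointwise. This equips the affine space $\sA(E)$, modelled on $\Omega^1(\R^4,\fg_E)$, with three parallel compatible almost complex structures $\bJ_1,\bJ_2,\bJ_3$, which together with the $L^2$ inner product make $\sA(E)$ into a flat hyperkähler manifold (all integrability and closedness statements are trivial in the flat case, modulo the usual Sobolev/decay setup one needs, e.g., from \cite{Taubes1983}).

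Next I would check the moment map picture for the action of the based gauge group $\sG_0(E)$. The infinitesimal action at $A$ sends $\xi\in\Omega^0(\R^4,\fg_E)$ to $\rd_A\xi$. Using the identification $\Omega^2_+(\R^4,\fg_E)\iso\Lambda^+\otimes\Omega^0(\R^4,\fg_E)$, the curvature self-dual part $F_A^+$ splits as $\sum_i\mu_i(A)\,\omega_i$, and a short calculation shows that $\mu_i\co\sA(E)\to\Omega^0(\R^4,\fg_E)$ is precisely a moment map for $\sG_0$ with respect to $\bJ_i$ (the three $\bJ_i$-linearity/equivariance identities following from the definition of $\bJ_i$ via Clifford multiplication by $\omega_i$). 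Hence the hyperkähler moment map equation $(\mu_1,\mu_2,\mu_3)=0$ is the ASD equation $F_A^+=0$, and
\[
  M(E) \;=\; \mu_1^{-1}(0)\cap\mu_2^{-1}(0)\cap\mu_3^{-1}(0)\big/\sG_0(E)
\]
is a hyperkähler quotient. The horizontal subspace at $A$ (the orthogonal complement of the gauge orbit inside $\ker \rd\mu_1\cap\ker \rd\mu_2\cap\ker \rd\mu_3$) is precisely $\ker\delta_I$ by \eqref{Eq_DeltaI}, and the induced metric on it is the restriction of the $L^2$ inner product, while the three induced complex structures are the $J_{\omega_i}$ of the statement. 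Since $S(\Lambda^+)$ is exactly the unit sphere in $\Im\H$, this determines $J_\omega$ for every $\omega\in S(\Lambda^+)$.

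The main technical obstacle is that $\sA(E)$ is infinite-dimensional and the quotient is by an infinite-dimensional group acting on non-compact $\R^4$, so the classical hyperkähler quotient theorem does not apply off the shelf; one has to justify that the analytic setup is sound. The essential inputs are provided by \autoref{Prop_ASDDecay}: the decay of $\ker\delta_I$ guarantees that the $L^2$ metric is finite and non-degenerate on $T_{[I]}M$, while surjectivity of $\delta_I$ identifies $T_{[I]}M$ with the quotient $\ker\rd\mu/\text{orbit}$ and ensures that the Kuranishi-type arguments go through transversally, so that the finite-dimensional moment map computations carry over verbatim. Once this is in place, compatibility of the $L^2$ metric with each $J_\omega$ is immediate (pointwise $J_\omega$ is an isometry), closedness of the Kähler forms follows from the quotient construction together with the fact that the pre-quotient Kähler forms are constant on $\sA(E)$, and integrability of each $J_\omega$ follows from the standard fact that a metric compatible with an $S^2$-family of almost complex structures satisfying the quaternion relations, all arising from a hyperkähler quotient, is hyperkähler.
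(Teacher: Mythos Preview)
The paper does not actually supply a proof of this proposition: it is stated immediately after the observation that $\delta_I$ commutes with the $\R\oplus\Lambda^+\iso\H$--action (citing Taubes), and the text then moves on without further argument. So there is no ``paper's proof'' to compare against; the result is treated as classical.

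Your hyperkähler quotient argument is the standard and correct way to establish this, going back essentially to Atiyah--Hitchin and Maciocia. The key algebraic input---that the $\H$--action preserves $\ker\delta_I$---is exactly what the paper records just before the proposition, and your sketch correctly organises the rest: the flat hyperkähler structure on $\sA(E)$, the identification of $F_A^+$ as the moment map for $\sG_0$, and the appeal to \autoref{Prop_ASDDecay} to make the $L^2$ metric finite and the slice argument work on the non-compact base $\R^4$. One small caveat worth flagging explicitly in your write-up is that the based gauge group $\sG_0$ acts freely on the ASD locus (this is where framing at infinity enters), so no orbifold issues arise; otherwise your sketch is sound.
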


This structure is $\SO(4)$--equivariant.
$M$ carries an action of $\R^4\rtimes \R^+$ where $\R^4$ acts by translation and $\R^+$ acts by dilation, i.e., by pullback via $s_\lambda$ where
\begin{align*}
  s_\lambda(x)\coloneq \lambda x
\end{align*}
for $\lambda\in\R^+$.
Since the centre of mass of the measure $|F_I|^2\vol$ is equivariant with respect to the $\R^4$--action, we can write
\begin{equation*}
  M = M^\circ \times \R^4
\end{equation*}
where $M^\circ$ is the space of instantons centred at zero.
The action of $\Lambda^+ \subset \Lambda^2 \iso \so(4)$ preserves this product structure and $\Lambda^+$ acts on the factor $\R^4$ in the usual way.

\begin{example}
  \label{Ex_ChargeOne}
  If $E$ is the unique $\SU(2)$--bundle over $S^4$ with $c_2(E)=1$, then $E$ carries a single ASD instanton $I$, commonly called ``the one-instanton'', unique up to scaling, translation and changing the framing at infinity.
  We can naturally write the corresponding moduli space as
  \begin{equation*}
    M = M^\circ\times\R^4
      = (\Re(\Hom(\C^2,\slS^+))\setminus\set{0})/\Z_2 \times \R^4.
  \end{equation*}
  Here $\slS^+$ is the positive spin representation associated with $\R^4$ and $\C^2$ has to be thought of as a $\SU(2)$ representation.
  In this situation both $\C^2$ and $\slS^+$ have canonical quaternionic structures and thus $\Hom(\C^2,\slS^+)$ inherits a real structure.
  The real part are simply the quaternionic-linear homomorphisms.
  The reader can consult \cite[Section 3.1]{Donaldson1990} for a more extensive discussion.
\end{example}

\begin{example}
  In general, if $E$ is an $\SU(2)$--bundle over $S^4$, then $M$ can be understood rather explicitly in terms of the ADHM construction \cite[Section~3.3]{Donaldson1990}.
\end{example}

\begin{prop}
  \label{Prop_UniversalConnection}
  There exists a $G$--bundle $\bE$ over $M \times S^4$ together with a framing $\bE|_{M\times\set\infty} \to G$ and a tautological connection $\bA \in \sA(\bE)$ on $\bE$ such that:
  \begin{itemize}
  \item
    $\bE|_{\set{[I]}\times S^4} \iso E$ and
  \item
    $\bA$ restricted to ${\set{[I]}\times \R^4}$ is equivalent to $[I]$ via $\sG_0(E)$.
  \end{itemize}
  If we decompose the curvature of the tautological connection $\bA$ over $M\times \R^4$ according to the bi-grading on $\Lambda^* T^*(M\times\R^4)$ induced by $T(M\times\R^4) = \pi_1^*TM \oplus \pi_2^*T\R^4$, then its components satisfy the following:
  \begin{itemize}
  \item
    $F_{\bA}^{2,0} = -2\Delta_I^{-1}\<[a,b]\>$.
  \item
    $F_{\bA}^{1,1} \in \Gamma(\Hom(\pi_1^*TM,\pi_2^*T\R^4\otimes \fg_\bE))$ at $([I],x)$ is the evaluation of $a \in T_{[I]}M = \ker \delta_I$ at $x$;
    in particular, it is $(\R\oplus\Lambda^+)$--linear.
  \item
    $F_{\bA}^{0,2} \in \Gamma(\pi_2^*\Lambda^-(\R^4)^* \otimes \fg_\bE)$.
  \end{itemize}
\end{prop}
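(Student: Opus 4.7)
The plan is to realise $\bE$ and $\bA$ via the universal family construction on the principal $\sG_0(E)$--bundle $\sA^* \to M$, where $\sA^*$ denotes the space of irreducible finite-energy ASD connections on $E$ (with appropriate decay at infinity). The pullback $\pi_2^*E \to \sA^* \times S^4$ carries a diagonal action of $\sG_0(E)$, and I would define $\bE \to M \times S^4$ as the quotient. The framing at $\infty$ descends because elements of $\sG_0(E)$ fix $E_\infty$ by definition, and $\bE|_{\{[I]\} \times S^4} \iso E$ is tautological.

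To construct $\bA$ I would first equip $\pi_2^*E$ with the pulled-back connection that at $(A,x)$ agrees with $A$ on $S^4$ and is flat along $\sA^*$; this is $\sG_0(E)$--equivariant. To descend it to a connection on $\bE$ requires a horizontal distribution transverse to the $\sG_0(E)$--orbits, and the natural choice is the Coulomb-gauge connection $\theta$ on $\sA^* \to M$ with connection one-form
\begin{equation*}
  \theta_A(\alpha) := \Delta_A^{-1} \rd_A^* \alpha,
\end{equation*}
where $\Delta_A := \rd_A^* \rd_A$ is invertible on suitable weighted Sobolev spaces by irreducibility together with the decay estimates of \autoref{Prop_ASDDecay}. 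Setting $\bA$ to be the combination $\pi_2^*A + \theta$ on the equivariant bundle and descending yields the desired connection; its restriction to $\{[I]\} \times \R^4$ is equivalent to $[I]$ via an element of $\sG_0(E)$ by construction.

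The three curvature components are then computed one by one. The $(0,2)$--part restricts on each slice $\{[I]\} \times S^4$ to $F_I$, and lies in $\pi_2^* \Lambda^-(\R^4)^* \otimes \fg_\bE$ by the ASD condition. The $(1,1)$--part at $([I], x)$ reflects how the pulled-back connection varies over $M$: a tangent vector $a \in T_{[I]} M = \ker \delta_I$ is precisely an infinitesimal variation of $I$, so pairing with $v \in T_x \R^4$ yields $a_x(v) \in \fg_{E,x}$. The $(\R \oplus \Lambda^+)$--linearity then follows because the hyperk\"ahler structure on $M$ is by definition induced by the $(\R \oplus \Lambda^+)$--action on $\Omega^1(\R^4, \fg_E)$ which commutes with $\delta_I$, and the evaluation map intertwines this with the standard action on $T_x \R^4$. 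For the $(2,0)$--part one computes the curvature of $\theta$ on Coulomb-horizontal vectors $a, b \in \ker \delta_I$: the term $[\theta, \theta]$ vanishes since $\theta(a) = \theta(b) = 0$, and differentiating $\rd_A^*$ in the direction $a$ produces the pointwise pairing $\<[a, b]\> \in \Omega^0(\R^4, \fg_E)$, so that antisymmetrisation gives $\rd\theta(a, b) = -2 \Delta_I^{-1} \<[a, b]\>$ as required.

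The principal analytical obstacle is the non-compactness of $\R^4$: one must verify that $\Delta_I$ is genuinely invertible between appropriate weighted Sobolev spaces with sufficient decay at infinity, so that $\theta$ defines an honest connection and each curvature component decays fast enough to be of subsequent use in the gluing analysis. This is handled by the classical framework of \cite{Taubes1983} and \cite{Nakajima1990} together with \autoref{Prop_ASDDecay}; once in place the remaining computations reduce to formal manipulations in a Coulomb slice through $I$.
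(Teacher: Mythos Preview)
Your proposal is correct and follows essentially the same approach as the paper's proof sketch: construct the tautological connection on $\sA^*\times S^4$, descend via the Coulomb-gauge horizontal distribution, and read off the curvature components. You supply considerably more detail than the paper does---in particular the explicit computation of the $(2,0)$--component as the curvature of the Coulomb connection $\theta$, and the remark on the analytic issues stemming from the non-compactness of $\R^4$---whereas the paper simply asserts that the $(2,0)$--part ``arises from the curvature of this connection'' and that the remaining two bullets are ``tautological''.
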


\begin{proof}[Proof sketch]
  There is a tautological connection on the pullback of $E$ to $\sA(E) \times S^4$.
  It is flat in the $\sA(E)$--direction.
  It is $\sG_0$--equivariant, but not basic;
  hence, induces a connection on $M\times S^4$ after choosing a connection on $\sA(E) \to \sA(E)/\sG_0(E)$.
  We chose the connection given whose horizontal distribution is given by the Coulomb gauge with respect to the metric on $\R^4$;
  that is, the connection with connection $1$--form $\theta(a) = \Delta_I^{-1}\rd_I^* a$ for $a \in T_I\sA = \Omega^1(\R^n,\fg_E)$.
  The $(2,0)$--component of the curvature of $\bA$ arises from the curvature of this connection.
  The second two bullets are tautological.
\end{proof}

%%% Local Variables: 
%%% mode: latex
%%% TeX-master: "icf"
%%% End: 

\section{Fueter sections of instanton moduli bundles over Cayley submanifolds}
\label{Sec_Fueter}
 
We now discuss models of $\Spin(7)$--instantons which are highly concentrated near a Cayley submanifold $Q$ in a $\Spin(7)$--manifold $(X,\Phi)$.

\subsection{The flat model}

We begin with studying the situation on $\R^8 = \R^4\times \R^4$.
Fix a basis $(\omega_1,\omega_2,\omega_3)$ of $\Lambda^+ \coloneq \Lambda^+ (\R^4)^*$ satisfying
\begin{equation*}
  \omega_i \wedge\omega_j = 2\delta_{ij} \vol
\end{equation*}
with $\vol$ denoting the standard volume form on $\R^4$.
Set $J_i \coloneq J_{\omega_i}$.
The standard $\Spin(7)$--structure $\Phi$ on $\R^8 = \R^4 \times \R^4$ can be written as
\begin{equation*}
  \Phi \coloneq \pi_1^*\vol + \pi_2^* \vol - \sum_{i=1}^3 \pi_1^*\omega_i \wedge \pi_2^*\omega_i.
\end{equation*}

It is a straight-forward computation, using \autoref{Prop_Lambda2SplittingHyperkahlerPair}, to check that:
\begin{prop}
  \label{Prop_InstantonComponentwise}
  A connection $A$ on a $G$--bundle $\pi_2^*E$ is a $\Spin(7)$--instanton if and only if:
  \begin{itemize}
  \item
    $\(F_A^{2,0}\)^+ = \(F_A^{0,2}\)^+$ and
  \item
    $F_A^{1,1}$ thought of as map $L \co T\R^4 \to \Omega^1(\R^4,\fg_E)$ satisfies
    \begin{equation}
      \label{Eq_Fueter}
      L - \sum\nolimits_{i=1}^3 J_i \circ L \circ J_i = 0.
    \end{equation}
  \end{itemize}
\end{prop}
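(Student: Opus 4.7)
The plan is to derive both bullets directly from \autoref{Prop_Lambda2SplittingHyperkahlerPair} by exploiting the fact that the two summands $\Lambda^2_3$ and $\Lambda^2_4$ of $\Lambda^2_7$ live in complementary pieces of the bi-grading on $\Lambda^2 T^*\R^8$. Specifically, I would begin by decomposing
\begin{equation*}
  \Lambda^2 T^*\R^8 = \Lambda^{2,0} \oplus \Lambda^{1,1} \oplus \Lambda^{0,2}
\end{equation*}
induced from $T\R^8 = \pi_1^*T\R^4 \oplus \pi_2^*T\R^4$, and then apply \autoref{Prop_Lambda2SplittingHyperkahlerPair} with $S=T=\R^4$ and $(\mu_1,\mu_2,\mu_3)=(\omega_1,\omega_2,\omega_3)$. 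The key observation is that $\Lambda^2_3 = \bigoplus_i \langle\pi_1^*\omega_i - \pi_2^*\omega_i\rangle$ lies entirely in $\Lambda^{2,0} \oplus \Lambda^{0,2}$, whereas $\Lambda^2_4$ lies entirely in $\Lambda^{1,1}$. Consequently, $\pi_7(F_A) = 0$ decouples into the independent vanishing of the $\Lambda^2_3$-projection of $F_A^{2,0} + F_A^{0,2}$ and of the $\Lambda^2_4$-projection of $F_A^{1,1}$.

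For the first bullet, I would write $(F_A^{2,0})^+ = \sum_i a_i \pi_1^*\omega_i$ and $(F_A^{0,2})^+ = \sum_i b_i \pi_2^*\omega_i$, observe that the anti-self-dual parts of $F_A^{2,0}$ and $F_A^{0,2}$ are orthogonal to $\Lambda^2_3$, and compute the orthogonal projection onto span$\{\pi_1^*\omega_i - \pi_2^*\omega_i\}$ to conclude it vanishes precisely when $a_i = b_i$ for all $i$. Under the natural identification $\Lambda^+(\R^4_1)^* \iso \Lambda^+(\R^4_2)^*$ given by $\omega_i \leftrightarrow \omega_i$, this is exactly $(F_A^{2,0})^+ = (F_A^{0,2})^+$.

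For the second bullet, I would identify $F_A^{1,1}$ at each point with a linear map $L\co T\R^4 \to \Omega^1(\R^4,\fg_E)$ via the metric pairing $\langle L\cdot,\cdot\rangle$. By \autoref{Prop_Lambda2SplittingHyperkahlerPair}, $\Lambda^2_4$ corresponds precisely to $\Hom_\Phi(T\R^4,T^*\R^4 \otimes \fg_E)$, and the orthogonal projection onto this subspace is (up to a scalar) the operator $\gamma L = L - \sum_i I_i L I_i$ from \autoref{Sec_Cayley}. Since both source and target $\R^4$ carry the same complex structures $J_i = J_{\omega_i}$, the projection vanishes iff $L - \sum_i J_i L J_i = 0$, which is the stated equation.

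The proof is essentially a bookkeeping exercise rather than one with a hard obstacle; the only thing requiring mild care is tracking the minus sign in $\omega_i - \mu_i$ from \autoref{Prop_Lambda2SplittingHyperkahlerPair} (so that the first bullet comes out as $(F_A^{2,0})^+ = (F_A^{0,2})^+$ and not a sign-flipped version) and verifying that the diagonal piece $\pi_1^*\omega_i + \pi_2^*\omega_i$ indeed lands in $\Lambda^2_{21}$ so that it imposes no condition.
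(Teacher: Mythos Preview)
Your proposal is correct and is precisely the straightforward computation the paper has in mind: the paper's own ``proof'' is merely the sentence that this follows from \autoref{Prop_Lambda2SplittingHyperkahlerPair}, and your argument spells out exactly that, using the bigrading to separate the $\Lambda^2_3$-- and $\Lambda^2_4$--conditions and identifying the latter with the $\gamma$--projection introduced in \autoref{Sec_Cayley}. Your remarks about tracking the sign in $\omega_i-\mu_i$ and checking that $\pi_1^*\omega_i+\pi_2^*\omega_i \in \Lambda^2_{21}$ are the only non-automatic points, and both are handled correctly.
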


Let $U$ be an open subset of $\R^4$.
Suppose $A_i$ is a sequence of $\Spin(7)$--instantons on $U \times \R^4$ on $\pi_2^*E$ concentrating along $U \times \set{0}$ and $(\lambda_i)$ is a null-sequence such that $[(x,y) \mapsto (x,\lambda_i y)]^*A_i$ converges to $A$.
Then it follows from \eqref{Prop_InstantonComponentwise} that
\begin{itemize}
\item
  $\(F_A^{0,2}\)^+ = 0$ and
\item
  $F_A^{1,1}$ satisfies \eqref{Eq_Fueter}.
\end{itemize}
By the first bullet, such an $A$ determines a map $\fI \co U \to M$ and by the second bullet this map satisfies the Fueter equation
\begin{equation*}
  \nabla \fI - \sum_{i=1}^3 J_i \circ \nabla\fI \circ J_i = 0.
\end{equation*}
Up to gauge equivalence, $A$ can be reconstructed from $\fI$ by pulling back the tautological connection on $M\times \R^4$ via $\fI \times \id_{\R^4}$.
Thus, Fueter maps into $M$ can serve as models for highly concentrated $\Spin(7)$--instantons on $U\times \R^4$.

\subsection{The model on \texorpdfstring{$NQ$}{NQ}}

We now globalise the above discussion.
Fix a moduli space $M$ of framed ASD instantons on a $G$--bundle $E$ over $\R^4$, as in \autoref{Sec_ModuliOfASDInstantons} and denote by $E_\infty$ a $G$--bundle over $Q$ together with a connection $A_\infty$.

\begin{definition}
  The \defined{instanton moduli bundle} $\fM\to Q$ associated with $Q$, $E_\infty$ and $M$ is defined by
  \begin{equation*}
    \fM \coloneq (\Fr(NQ)\times E_\infty)\times_{\SO(4)\times G} M.
  \end{equation*}
\end{definition}

\begin{example}\label{Ex_MChargeOne}
  If $M=(\Re(\Hom(\C^2,\slS^+))\setminus\{0\})/\Z_2\times \R^4$, as in \autoref{Ex_ChargeOne}, and we pick spin structures $\fs$ and $\fu$ as in \autoref{Rmk_CayleySpin}, then
  \begin{equation*}
    \fM
    = (\fs\times\fu\times E_\infty)\times_{\Spin(4)\times G} M
    = (\Re(\Hom(\C^2,\slS^+))\setminus\{0\})/\Z_2 \times NQ.
  \end{equation*}
\end{example}

Denote by $N_\infty Q \coloneq \Fr(NQ)\times_{\SO(4)} S^4$ the sphere-bundle obtained from $NQ$ by adjoining a section at infinity.

\begin{theorem}[Donaldson--Segal \cite{Donaldson2009} and Haydys
  \cite{Haydys2011}]
  \label{Thm_Haydys}
  To each section $\fI\in\Gamma(\fM)$ we can assign a $G$--bundle $E=E(\fI)$ over $N_\infty Q$ together with a connection $I=I(\fI)$ and a framing $f \co E|_{\infty}\to E_\infty$ such that:
  \begin{itemize}
  \item For each $x\in Q$ the restriction of $I$ to $N_xQ$ represents $\fI(x)$.
  \item The framing $f$ identifies the restriction of $I$ to the section at infinity with $A_\infty$.
  \end{itemize}
\end{theorem}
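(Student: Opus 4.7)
The construction is the standard associated bundle mechanism applied to the universal data $(\bE,\bA)$ over $M \times S^4$ from \autoref{Prop_UniversalConnection}. Let $H := \SO(4) \times G$ act on $M \times S^4$ as follows: $\SO(4)$ rotates $\R^4 \subset S^4$ (fixing $\infty$) and pulls back framed instantons on $M$; $G$ acts on $M$ and on $\bE$ by changing the framing at $\infty$. Both actions lift to $\bE$ and preserve $\bA$. Under the definition of $\fM$, a section $\fI \in \Gamma(\fM)$ is the same datum as an $H$-equivariant map
\begin{equation*}
  \widehat\fI \co P := \Fr(NQ) \times_Q E_\infty \to M.
\end{equation*}

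I would then push this through the construction in two steps. First, pull back $(\bE,\bA)$ along the $H$-equivariant map $F := \widehat\fI \times \id_{S^4} \co P \times S^4 \to M \times S^4$. Second, quotient by $H$: since $H$ acts freely on $P$, the pullback $F^*\bE$ descends to a principal $G$-bundle $E(\fI)$ over $P \times_H S^4 = N_\infty Q$, compatibly with the residual principal $G$-action. For the connection descent, equip the principal $H$-bundle $P \to Q$ with the product connection coming from the Levi-Civita connection on $\Fr(NQ)$ (induced from $Q \subset X$) and from $A_\infty$ on $E_\infty$; this reference splitting provides the horizontal subspaces needed to descend the $H$-invariant $\fg$-valued $1$-form $F^*\bA$ to a connection $I(\fI)$ on $E(\fI)$.

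The two compatibility properties then follow by restriction. Over a point $x \in Q$, choose $p \in P$ above $x$; the map $y \mapsto [p,y]$ identifies $S^4 \supset \R^4 = N_xQ$ with the fibre of $N_\infty Q \to Q$, and along this identification $(E(\fI), I(\fI))$ pulls back to a framed ASD instanton in the equivalence class $\widehat\fI(p)$, i.e.\ $\fI(x)$. Over the section at infinity, the framing $\bE|_{M \times \{\infty\}} \cong M \times G$ from \autoref{Prop_UniversalConnection} trivialises $\bE$ there with $\bA|_{M \times \{\infty\}}$ the flat product connection; the descent therefore yields a canonical isomorphism $f \co E(\fI)|_\infty \xrightarrow{\sim} P \times_H G = E_\infty$, and under $f$ the induced connection is precisely the one on $E_\infty$ specified by the reference splitting, namely $A_\infty$.

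The one point requiring care is the connection descent: $F^*\bA$ is $H$-invariant but not $H$-horizontal on $P \times S^4$, which forces the explicit use of a reference connection on $P$. That choice is dictated by the extrinsic geometry, namely the normal connection on $NQ$ and the background connection $A_\infty$ on $E_\infty$, and it is exactly this choice which produces the asymptotic condition in the second bullet; everything else is formal from the universal family.
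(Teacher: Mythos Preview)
The paper does not supply its own proof of this theorem: it is stated with attribution to Donaldson--Segal and Haydys and then used as a black box, so there is nothing in the paper to compare against directly. Your construction is the standard one and is correct; it is essentially what one finds in the cited references. The only point worth flagging is that the $H$--equivariance of the universal pair $(\bE,\bA)$ is not explicitly stated in \autoref{Prop_UniversalConnection} as written in the paper, though it is implicit in the construction sketched there; you rely on it, so in a self-contained write-up you might want to spell out why the $\SO(4)$-- and $G$--actions on $M\times S^4$ lift to $\bE$ preserving $\bA$. Otherwise your identification $P\times_H S^4 = N_\infty Q$ (using that $G$ acts trivially on $S^4$), the descent of the connection via the reference connection on $P$ built from the normal connection and $A_\infty$, and the verification of the two bullet points are all sound.
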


We set $I_\lambda \coloneq I(s_{1/\lambda}^*\fI)$ and impose the condition that
\begin{equation}
  \label{Eq_Limit}
  \lim_{\lambda\to 0} s_\lambda^*\pi_7^0(F_{I_\lambda}) = 0
\end{equation}
where $\pi_7^0$ denotes the zeroth order Taylor expansion of $\pi_7$
off $Q$.
As before, this condition can be phrased in terms of a p.d.e.~on $\fI$.
Define the vertical tangent bundle $V\fM$ to $\fM$ by
\begin{equation*}
  V\fM \coloneq ({\rm Fr}(NQ)\times E_\infty)\times_{\SO(4)\times G} TM.
\end{equation*}
If $\fI$ is a section of $\fM$, then $\Phi$ selects a subbundle
\begin{equation*}
  \Hom_\Phi(TQ,\fI^*V\fM)\subset \Hom(TQ,\fI^*V\fM)
\end{equation*}
and there is a ``Clifford multiplication'' map
\begin{equation*}
  \gamma\co \Hom(TQ,\fI^*V\fM) \to \Hom_\Phi(TQ,\fI^*V\fM)
\end{equation*}
as discussed before.
Moreover, the connections on $NQ$ and $E_\infty$ induce a connection on $\fM$ assigning to each section $\fI$ its covariant derivative $\nabla\fI\in\Omega^1(\fI^*V\fM)$.

\begin{definition}
  The \defined{Fueter operator} $\fF=\fF_\Phi$ associated with $\fM$ is defined by
  \begin{equation*}
    \fI\in\Gamma(\fM) \mapsto \fF_\Phi\fI\coloneq\gamma(\nabla \fI) \in\Gamma(\Hom_\Phi(TQ,\fI^*V\fM)).
  \end{equation*}
  A section $\fI\in\Gamma(\fM)$ is called a \defined{Fueter section} if it satisfies
  \begin{equation*}
    \fF\fI = 0.
  \end{equation*}
\end{definition}

\begin{example}
  \label{Ex_ChargeOneFueter}
  If $M$ is as in \autoref{Ex_ChargeOne}, then the Fueter operator $\fF$ lifts to the twisted Dirac operator
  \begin{equation*}
    \slD\co
    \Gamma(\Re(\Hom(E_\infty,\slS^+)\oplus \slS^+\otimes U))
    \to
    \Gamma(\Re(\Hom(E_\infty,\slS^-)\oplus \slS^-\otimes U)).
  \end{equation*}
\end{example}

The Fueter operator $\fF$ is compatible with the product structure on 
\begin{equation*}
  \fM=\mathring\fM \times NQ
\end{equation*}
corresponding to $M=M^\circ \times\R^4$ with $M^\circ$ denoting the space of instantons centred at zero.
Its restriction to the second factor is given by the Fueter operator $F_Q$ associated with $Q$.

\begin{theorem}[Donaldson--Segal~\cite{Donaldson2009} and Haydys
  \cite{Haydys2011}]
  \label{Thm_Haydys2}
  If $\fI\in\Gamma(\fM)$, then we can identify $\Gamma(\Hom_\Phi(TQ,\fI^*V\fM))$ with a subspace of $\Omega^2\(NQ, \fg_{E(\fI)}\)$.
  With respect to this identification we have the identity
  \begin{equation*}
    \fF\fI = \pi_7^0\(F_{I(\fI)}^{1,1}\).
  \end{equation*}
  In particular, $I(\fI)$ satisfies equation \eqref{Eq_Limit} if and only if $\fI$ is a Fueter section.
\end{theorem}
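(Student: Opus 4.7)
The strategy is a pointwise computation. Fix $x_0 \in Q$ and work in a neighbourhood of the fibre $N_{x_0}Q$ in the total space $NQ$; all four statements then follow from the flat-model identifications of \autoref{Prop_Lambda2SplittingHyperkahlerPair} combined with the universal curvature formulas of \autoref{Prop_UniversalConnection}. The embedding needed in the first assertion is tautological: by \autoref{Prop_ASDDecay} the vertical tangent space of $\fM$ at $\fI(x)$ is $\ker \delta_{\fI(x)} \subset \Omega^1(N_xQ, \fg_{E(\fI)}|_{N_xQ})$, so sections of $\Hom(TQ, \fI^*V\fM)$ inject into $\pi^*T^*Q \otimes T^*_{\mathrm{vert}}NQ \otimes \fg_{E(\fI)}$, which is exactly the $(1,1)$-piece of $\Omega^2(NQ, \fg_{E(\fI)})$ with respect to the bigrading coming from $TNQ = \pi^*TQ \oplus T_{\mathrm{vert}}NQ$. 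The subbundle $\Hom_\Phi(TQ, \fI^*V\fM)$ sits inside this as the pointwise $\Lambda^2_4$-summand of \autoref{Prop_Lambda2SplittingHyperkahlerPair}.

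Next, choose $g_\Phi$-normal coordinates along $Q$, an orthonormal frame of $NQ$ inducing the hyperkähler structures $(I_i, J_i)$ at $x_0$, and propagate the fibrewise framing of $E(\fI)|_{N_{x_0}Q}$ along $TQ$ using the Coulomb gauge of \autoref{Prop_UniversalConnection}. In this gauge $\Phi$ agrees to zeroth order along $\{x_0\}\times\R^4$ with the flat product form of \autoref{Ex_HyperkahlerPair}, so $\pi_7^0$ coincides with the projection of the flat model at $x_0$. Because $I(\fI)$ is built from \autoref{Thm_Haydys} as the pullback of the tautological connection $\bA$ by $\fI \times \id$, the chain rule and the $(1,1)$-clause of \autoref{Prop_UniversalConnection} give, at any $(x_0, y)$ with $v \in T_{x_0}Q$,
\begin{equation*}
  F_{I(\fI)}^{1,1}(v,\,\cdot\,)\bigr|_{(x_0, y)} = \ev_y\bigl((\nabla_v \fI)(x_0)\bigr) \in T^*_y N_{x_0}Q \otimes \fg_E,
\end{equation*}
so under the embedding of the first paragraph the $(1,1)$-component of $F_{I(\fI)}$ is exactly $\nabla \fI$.

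It remains to match $\pi_7^0$ on the $(1,1)$-part with the Clifford projection $\gamma$. By \autoref{Prop_Lambda2SplittingHyperkahlerPair} the $(1,1)$-part of $\Lambda^2_7$ is precisely $\Lambda^2_4 \cong \Hom_\Phi(T_{x_0}Q, N_{x_0}Q)$, and the projection of $\Hom(T_{x_0}Q, N_{x_0}Q)$ onto this summand is, up to the factor $\tfrac14$ recorded before \autoref{Def_CayleyUnobstructed}, the map $\gamma$. The $(\R\oplus\Lambda^+)$-linearity clause of \autoref{Prop_UniversalConnection} ensures that $\ker \delta_{\fI(x_0)}$ is preserved by the quaternionic action, so this identification extends to $\ker\delta_{\fI(x_0)}$-valued maps. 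Combined with the previous paragraph this yields
\begin{equation*}
  \pi_7^0\bigl(F_{I(\fI)}^{1,1}\bigr)\bigr|_{x_0} = \gamma(\nabla \fI)\bigr|_{x_0} = (\fF\fI)(x_0),
\end{equation*}
and the ``in particular'' statement is then immediate from the definition of $\fF$.

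The main obstacle is the gauge-theoretic content of the second paragraph: matching the connection on $E(\fI)$ transverse to the zero section of $NQ$ with the Coulomb-gauge universal connection of \autoref{Prop_UniversalConnection}. One must verify that the horizontal distribution induced on $\fM \to Q$ by the connection on $E_\infty$ together with the Levi-Civita connection on $NQ$ agrees with the Coulomb distribution pulled back via $\fI$, so that $\nabla \fI$ in the sense used to define $\fF$ literally equals $\fI_* \circ$ (horizontal derivative in the universal family). Once this compatibility is in hand the remaining steps are pointwise linear algebra on the flat model, but pinning down the correct gauge—and checking that no higher-order terms in the normal direction contribute to $\pi_7^0$—is where the technical work lies.
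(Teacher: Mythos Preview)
The paper gives no proof of this theorem: it is stated with attribution to Donaldson--Segal and Haydys, and the only argument offered in the paper is the flat-model heuristic surrounding \autoref{Prop_InstantonComponentwise} in \autoref{Sec_Fueter}. Your sketch is precisely a fleshing-out of that heuristic into a pointwise computation, and the structure is sound: the embedding via $\ker\delta_{\fI(x)}\subset\Omega^1(N_xQ,\fg_E)$, the use of the $(1,1)$-clause of \autoref{Prop_UniversalConnection} to identify $F_{I(\fI)}^{1,1}$ with $\nabla\fI$, and the matching of $\pi_7^0$ on the $(1,1)$-part with $\gamma$ via \autoref{Prop_Lambda2SplittingHyperkahlerPair} are exactly the right ingredients. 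Your flagging of the gauge-compatibility issue---that the horizontal distribution on $\fM$ used to define $\nabla\fI$ must agree with the one induced by the Coulomb gauge in \autoref{Prop_UniversalConnection}---is the genuine technical point, and you are right that this is where the work lies.

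Two minor remarks. First, the ``in particular'' clause is not literally immediate from the displayed identity: \eqref{Eq_Limit} concerns the full $\pi_7^0(F_{I_\lambda})$, not just its $(1,1)$-part, so one must also note that the $(0,2)$-component is fibrewise anti-self-dual (hence annihilated by $\pi_7^0$) and that the $(2,0)$-contribution scales away as $\lambda\to 0$. Both are easy but deserve a sentence. Second, watch the normalisation: the paper's $\gamma$ is four times the orthogonal projection onto $\Hom_\Phi$, so either your embedding or the identity itself absorbs this factor---harmless, but worth tracking.
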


\begin{definition}
  The \defined{linearised Fueter operator}
  \begin{equation*}
    F_\fI=F_{\fI,\Phi}\co\Gamma(\fI^*V\fM)\to\Gamma(\Hom_\Phi(TQ,\fI^*V\fM))
  \end{equation*}
  for $\fI\in\Gamma(\fM)$ is defined by
  \begin{equation*}
    F_{\fI,\Phi}(\hat \fI)\coloneq\gamma(\nabla \hat \fI)\in \Gamma(\Hom_\Phi(TQ,\fI^*V\fM)).
  \end{equation*}
\end{definition}

\begin{definition}
  \label{Def_FueterUnobstructed}
  A Fueter section $\fI$ is called \defined{unobstructed} if the linearised Fueter operator $F_\fI$ is surjective.
\end{definition}

\begin{example}
  \label{Ex_ChargeOneLinearisedFueter}
  If $M$ is as in \autoref{Ex_ChargeOne}, then the linearised Fueter operator $F_\fI$ lifts to the twisted Dirac operator
  $\slD\co
  \Gamma(\Re(\Hom(E_\infty,\slS^+)\oplus \slS^+\otimes U))
  \to
  \Gamma(\Re(\Hom(E_\infty,\slS^-)\oplus \slS^-\otimes U))$.
  In particular, it only depends on the spin structure $\fs$ and not on $\fI$.
  Using the Atiyah--Singer index theorem we can compute that in the current situation
  \begin{equation}
    \label{Eq_FueterIndex}
    \ind \mathring F_\fI = -\frac14\sigma(Q) - \int_Q c_2(E_\infty)
  \end{equation}
  where $\mathring F_\fI$ is the restriction of $F_\fI$ to $V\mathring \fM$.
\end{example}

%%% Local Variables: 
%%% mode: latex
%%% TeX-master: "icf"
%%% End: 

\section{Approximate \texorpdfstring{$\Spin(7)$}{Spin(7)}--instantons}
\label{Sec_Approx}

Throughout the next three sections we assume the hypotheses of \autoref{Thm_A}.
For each sufficiently small gluing parameter $\lambda > 0$ we first construct a connection $A_\lambda$ by grafting $I_\lambda = I(\fI_\lambda)$ into $A_0$ by hand.
$A_\lambda$ will not quite be a $\Spin(7)$--instanton;
however, $\pi_7(F_{A_\lambda})$, the failure of being a $\Spin(7)$--instanton, can be made very small.
We are then left with solving the mildly non-linear p.d.e.
\begin{equation}
  \label{Eq_A}
  \(\rd_{A_\lambda}^* a, \pi_7(F_{A_\lambda+a})\) = L_\lambda a + Q(a) + \pi_7(F_{A_\lambda}) = 0
\end{equation}
with
\begin{equation*}
  L_\lambda
  \coloneq L_{A_\lambda} 
  =
  \begin{pmatrix}
    \rd_{A_\lambda}^* \\
    \pi_7\rd_{A_\lambda}
  \end{pmatrix},
\end{equation*}
see \eqref{Eq_Spin7InstantonLinearisation}, and
\begin{equation*}
  Q(a)\coloneq \frac12\pi_7([a\wedge a])
\end{equation*}
for $a = a(\lambda) \in \Omega^1(X,\fg_{E_\lambda})$.
Given suitable control on $L_\lambda$ and $Q$, \eqref{Eq_A} can be solved by appealing to Banach's fixed-point theorem.

\begin{remark}
  \label{Rmk_Reducible}
  If $A_0$ is reducible, we might not be able to construct $a$ such that $\rd_{A_\lambda}^* a = 0$ on the nose, but only ``modulo $H^0_{A_0}$''.
  For the purpose of proving \autoref{Thm_A} it is not important to have $\rd_{A_\lambda}^* a = 0$.
  If $A_0$ is reducible then, in order to achieve surjectivity, one has to work with $\bar L_\lambda \co \Omega^1(X,\fg_{E_\lambda}) \oplus H^0_{A_0} \to \Omega^0(X,\fg_{E_\lambda}) \oplus \Omega^2_7(X,\fg_{E_\lambda})$ defined by
  \begin{equation*}
    \bar L_\lambda (a,o) = L_\lambda (a) + \iota_\lambda(o)
  \end{equation*}
  where $\iota_\lambda \co H^0_{A_0} \to \Omega^0(X,\fg_{E_\lambda})$ is a inclusion map constructed by first cutting of $o$ to zero near $Q$ and then thinking of it as a section of $\fg_{E_\lambda}$.
  In order to not clutter the exposition any further, we assume in the following that $A_0$ is irreducible.
\end{remark}

\begin{convention}
  We fix a constant $\Lambda>0$ such that all of the statements of the
  kind ``if $\lambda\in(0,\Lambda]$, then \ldots'' appearing in the
  following are valid.  This is possible since there are only a finite
  number of these statements and each one of them is valid provided
  $\Lambda$ is sufficiently small.  By $c>0$ we will denote a generic
  constant whose value does not depend on $\lambda\in(0,\Lambda]$ but
  may change from one occurrence to the next.
\end{convention}

\subsection{Pregluing construction}

\begin{construction}\label{prop:s7:pregluing}
  For each $\lambda\in(0,\Lambda]$ we construct a $G$--bundle $E_{\lambda}$ together with a connection $A_{\lambda}=A\#_\lambda\fI$ from $E_0$, $A_0 \in \sA(E_0)$ and $\fI$.
  The bundles $E_{\lambda}$ are pairwise isomorphic.
\end{construction}

Let us set up some notation.
Fix a constant $\zeta>0$ such that the exponential map identifies a tubular neighbourhood of width $10\zeta$ of $Q$ in $X$ with a neighbourhood of the zero section in $NQ$.
For $I\subset\R$ we set
\begin{equation*}
  U_{I} \coloneq \{ v\in NQ : |v|\in I \}
  \qandq
  V_{I} \coloneq \{ x \in X : r(x)\in I \}.
\end{equation*}
Here
\begin{equation*}
  r\coloneq d(\cdot,Q)\co X\to [0,\infty)
\end{equation*}
 denotes the distance from $Q$.
Fix a smooth-cut off function $\chi\co[0,\infty)\to[0,1]$ which vanishes on $[0,1]$ and is equal to one on $[2,\infty)$.
For $\lambda\in(0,\Lambda]$ we define $\chi^-_{\lambda}\co X\to[0,1]$ and $\chi^+\co X\to[0,1]$ by
\begin{equation*}
  \chi^-_{\lambda}(x)\coloneq \chi(r(x)/\lambda)
  \qandq
  \chi^+(x)\coloneq 1-\chi(r(x)/2\zeta),
\end{equation*}
respectively.

Using radial parallel transport we can identify $E(\fI)$ over $U_{(R,\infty)}$ for some $R>0$ with the pullback of $E(\fI)|_{\infty}$ to said region and similarly we can identify $E_0$ over $V_{[0,\zeta)}$ with the pullback of $E_0|_{Q}$.
Hence, via the framing $\Phi$ we can identify $s_{1/\lambda}^*E(\fI)$ with $E_0$ on the overlap $V_{(\lambda,\sigma)}$ for $\lambda \in (0,\Lambda]$.
Patching both bundles via this identification yields $E_{\lambda}$.

To construct a connection on $E_{\lambda}$ note that on the overlap $I_{\lambda}\coloneq s_{1/\lambda}^*I(\fI)$ and $A_0$ can
be written as
\begin{equation*}
  I_\lambda = A_0|_{Q}+i_{\lambda}
  \qandq
  A_0 = A_0|_{Q}+a.
\end{equation*}
Here and in the following, by a slight abuse of notation, we denote by $A_0|_{Q}$ the pullback of $A_0|_{Q}$ to the overlap.
We define $A_{\lambda}$ by interpolating between $I_\lambda$ and
$A$ on the overlap as follows
\begin{equation}\label{eq:s7:atl}
  A_\lambda \coloneq A_0|_{Q}+\chi^-_{\lambda}a +\chi^+ i_{\lambda}.
\end{equation}
This completes the construction.
\qed

\subsection{Weighted Hölder spaces}

In order to quantify to what extent $\pi_7(A_\lambda)$ is small, we introduce certain norms, which are especially adapted to the geometric situation at hand.

\begin{definition}
  \label{Def_GlobalWeightedNorms}
  For $\lambda\in(0,\Lambda]$ we define a family of weight functions $w_{\ell,\delta;\lambda}$ on $X$ depending on two additional parameters $\ell\in\R$ and $\delta\in\R$ as follows
  \begin{align*}
    w_{\ell,\delta;\lambda}(x) \coloneq 
    \begin{cases}
      \lambda^{\delta}(\lambda+r(x))^{-\ell-\delta}
      &\text{if}~r(x)\leq \sqrt\lambda \\
      r(x)^{-\ell+\delta} &\text{if}~r(x) > \sqrt\lambda
    \end{cases}
  \end{align*}
  and set $w_{\ell,\delta;\lambda}(x,y) \coloneq \min\{w_{\ell,\delta;\lambda}(x), w_{\ell,\delta;\lambda}(y)\}$.
  For a Hölder exponent $\alpha\in(0,1)$ and $\ell,\delta\in\R$ we define (semi-)norms
  \begin{align*}
    \|f\|_{L^\infty_{\ell,\delta;\lambda}(U)}
    &\coloneq \|w_{\ell,\delta;\lambda}f\|_{L^\infty(U)}, \\
    [f]_{C^{0,\alpha}_{\ell,\delta;\lambda}(U)}
    &\coloneq \sup_{\stackrel{x\neq y\in U:}{d(x,y)\leq \lambda + \min\{r(x),r(y)\}}}
    w_{\ell-\alpha,\delta;\lambda}(x,y) \frac{|f(x)-f(y)|}{d(x,y)^\alpha} \qand \\
    \|f\|_{C^{k,\alpha}_{\ell,\delta;\lambda}(U)} 
    &\coloneq \sum_{j=0}^k
    \|\nabla^{k} f\|_{L^\infty_{\ell-j,\delta;\lambda}(U)}
    + [\nabla^{k}f]_{C^{0,\alpha}_{\ell-j,\delta;\lambda}}.
  \end{align*}
  Here $f$ is a section of a vector bundle over $U\subset X$ equipped with an inner product and a compatible connection.
  We use parallel transport to compare the values of $f$ at different points.
  If $U$ is not specified, then we take $U=X$.
\end{definition}

We will primarily use these norms for $\fg_{E_\lambda}$--valued tensor fields.

\begin{remark}
  The reader may find the following heuristic useful.
  Let $f$ be a $k$--form on $X$.
  Fix a small ball centred at a point $x\in Q$, identify it with a small ball in $T_xX=T_xQ\oplus N_xQ$ and rescale this ball by a factor $1/\lambda$. 
  Upon pulling everything back to this rescaled ball the weight function  $w_{-k,\delta;\lambda}$ becomes essentially $\lambda^k(1+|y|)^{k-\delta}$, where $y$ denotes the $N_xQ$--coordinate.
  Thus as $\lambda$ goes to zero a uniform bound $\|f_\lambda\|_{L^\infty_{-k,\delta;\lambda}}$ on a family $(f_\lambda)$ of $k$--forms ensures that the pullbacks of $f_\lambda$ decay like $|y|^{-k+\delta}$ in the direction of $N_xQ$.
  At the same time it forces $f_\lambda$ not to blowup at a rate faster than $r^{-k-\delta}$ along $Q$.
  The ``discrepancy'' in the exponents can be seen to be rather natural by considering the action of the inversion $y \mapsto \lambda y/|y|^2$.
\end{remark}

\begin{prop}
  \label{Prop_Multiplication}
  If $(f,g)\mapsto f\cdot g$ is a bilinear form satisfying $|f\cdot
  g|\leq |f||g|$, then
  \begin{equation*}
    \|f\cdot
    g\|_{C^{k,\alpha}_{\ell_1+\ell_2,\delta_1+\delta_2;\lambda}}
    \leq \|f\|_{C^{k,\alpha}_{\ell_1,\delta_1;\lambda}}
    \|g\|_{C^{k,\alpha}_{\ell_2,\delta_2;\lambda}}.
  \end{equation*}
\end{prop}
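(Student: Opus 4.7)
The proof is essentially bookkeeping, turning on two elementary properties of the weight functions. Let me lay out the plan.

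\textbf{Step 1: Factorisation of the weights.} The key identity is
\begin{equation*}
  w_{\ell_1+\ell_2,\delta_1+\delta_2;\lambda}(x) = w_{\ell_1,\delta_1;\lambda}(x)\cdot w_{\ell_2,\delta_2;\lambda}(x),
\end{equation*}
which one checks separately in the two regimes of \autoref{Def_GlobalWeightedNorms}: for $r(x)\leq\sqrt\lambda$ the product is $\lambda^{\delta_1+\delta_2}(\lambda+r(x))^{-(\ell_1+\ell_2)-(\delta_1+\delta_2)}$, and for $r(x)>\sqrt\lambda$ it is $r(x)^{-(\ell_1+\ell_2)+(\delta_1+\delta_2)}$. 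The pairwise weight $w_{\ell,\delta;\lambda}(x,y)$ then factorises as well after minimising each factor, up to a universal constant; see Step 2.

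\textbf{Step 2: Doubling/comparability.} Whenever $d(x,y)\leq \lambda+\min\{r(x),r(y)\}$, the quantities $\lambda+r(x)$ and $\lambda+r(y)$ are comparable (within a factor of $2$), and similarly $r(x)\sim r(y)$ on the regime $r>\sqrt\lambda$. Consequently $w_{\ell,\delta;\lambda}(x)\sim w_{\ell,\delta;\lambda}(y)\sim w_{\ell,\delta;\lambda}(x,y)$, with constants independent of $\lambda\in(0,\Lambda]$. This will let me replace the joint weight on a difference quotient by a weight evaluated at a single point at will.

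\textbf{Step 3: $L^\infty$ bound.} From $|f\cdot g|\leq |f||g|$ together with Step 1, pointwise
\begin{equation*}
  |w_{\ell_1+\ell_2,\delta_1+\delta_2;\lambda}(x)(f\cdot g)(x)|
  \leq |w_{\ell_1,\delta_1;\lambda}(x)f(x)|\cdot|w_{\ell_2,\delta_2;\lambda}(x)g(x)|,
\end{equation*}
which gives the $L^\infty_{\ell_1+\ell_2,\delta_1+\delta_2;\lambda}$ estimate.

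\textbf{Step 4: Hölder seminorm.} Write $(f\cdot g)(x)-(f\cdot g)(y) = (f(x)-f(y))\cdot g(x) + f(y)\cdot(g(x)-g(y))$ (using parallel transport to identify fibres as in \autoref{Def_GlobalWeightedNorms}). For pairs $x,y$ on the diagonal scale $d(x,y)\leq \lambda+\min\{r(x),r(y)\}$, I factor
\begin{equation*}
  w_{\ell_1+\ell_2-\alpha,\delta_1+\delta_2;\lambda}(x,y)
  \leq c\,w_{\ell_1-\alpha,\delta_1;\lambda}(x,y)\cdot w_{\ell_2,\delta_2;\lambda}(x),
\end{equation*}
which follows from Steps 1 and 2. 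Combining this with Step 3 applied to $g$ (or $f$) at a single point and the $C^{0,\alpha}_{\ell_1-\alpha,\delta_1;\lambda}$ control of $f$ handles the first summand; the second is symmetric. Together they yield the $C^{0,\alpha}_{\ell_1+\ell_2,\delta_1+\delta_2;\lambda}$ bound.

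\textbf{Step 5: Higher derivatives and assembly.} For $k\geq 1$ the Leibniz rule gives $\nabla^k(f\cdot g) = \sum_{j=0}^k\binom{k}{j}(\nabla^j f)\cdot(\nabla^{k-j} g)$, and Steps 3--4 applied to each summand with weight exponents $(\ell_1-j,\delta_1)$ and $(\ell_2-(k-j),\delta_2)$ give the required term of weight $(\ell_1+\ell_2-k,\delta_1+\delta_2)$. Summing over $j$ and $k$ and taking the minimum of the two factorisations in Step 4 gives the stated inequality.

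No single step is hard; the only place where one must be careful is Step 2, where the two-regime definition of $w_{\ell,\delta;\lambda}$ means one must check comparability across the transition $r(x)\sim\sqrt\lambda$, but this is handled by the fact that the two branches of $w$ match up to a universal constant at $r=\sqrt\lambda$ and the constraint $d(x,y)\leq \lambda+\min\{r(x),r(y)\}$ prevents crossing it by more than a bounded factor.
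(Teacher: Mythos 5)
Your argument is correct and is exactly the route the paper intends: the paper's entire proof is ``This follows immediately from the above definition,'' and your Steps 1--5 are a faithful unwinding of that (multiplicativity of the weights, comparability of weights at points with $d(x,y)\leq\lambda+\min\{r(x),r(y)\}$, and the Leibniz/product rule for the H\"older seminorms). The only caveat is that your Steps 2 and 4 (and the binomial coefficients for $k\geq 2$) introduce a harmless universal constant that the paper's statement formally omits, but this is immaterial for every subsequent application.
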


\begin{proof}
  This follows immediately from the above definition.
\end{proof}

\begin{cor}
  \label{Cor_DeltaNorm}
  If $\delta<0$, then there is a constant $c>0$ which is independent of $\lambda\in(0,\Lambda]$ such that
  \begin{equation*}
    \|f\|_{C^{k,\alpha}_{\ell,\delta;\lambda}}
    \leq c\lambda^{\delta/2} \|f\|_{C^{k,\alpha}_{\ell,0;\lambda}}
    \qandq
    \|f\|_{C^{k,\alpha}_{\ell,0;\lambda}}
    \leq c \|f\|_{C^{k,\alpha}_{\ell,\delta;\lambda}}
  \end{equation*}
\end{cor}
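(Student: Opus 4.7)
The plan is to reduce everything to a pointwise comparison of the weight functions $w_{\ell,\delta;\lambda}$ and $w_{\ell,0;\lambda}$ and then transport these bounds to the full $C^{k,\alpha}$ norms. More precisely, I claim one can show the following pointwise estimate: there exist constants $c,c'>0$ independent of $\lambda\in(0,\Lambda]$ with
\begin{equation*}
c'\, w_{\ell,0;\lambda}(x) \;\leq\; w_{\ell,\delta;\lambda}(x) \;\leq\; c\,\lambda^{\delta/2}\, w_{\ell,0;\lambda}(x)
\qquad\text{for all } x\in X.
\end{equation*}

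To prove this estimate I would split into the two cases appearing in the definition. In the inner region $r(x)\leq\sqrt\lambda$ the ratio is
$w_{\ell,\delta;\lambda}(x)/w_{\ell,0;\lambda}(x) = \lambda^{\delta}(\lambda+r(x))^{-\delta}$.
Since $\delta<0$ the function $s\mapsto s^{-\delta}$ is increasing, so on the interval $r(x)\in[0,\sqrt\lambda]$ this ratio takes values in $[1,\,2^{-\delta}\lambda^{\delta/2}]$ (using $\lambda+\sqrt\lambda\leq 2\sqrt\lambda$ when $\lambda\leq 1$). In the outer region $r(x)>\sqrt\lambda$ the ratio simplifies to $r(x)^{\delta}$, which is decreasing in $r(x)$ and therefore takes values in $[(\diam X)^{\delta},\,\lambda^{\delta/2}]$. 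Combining the two cases gives the displayed pointwise estimate.

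The $L^\infty_{\ell,\delta;\lambda}$ part of the corollary now follows by multiplying the estimate by $|f(x)|$ and taking the supremum. For the Hölder part, I would observe that the same inequalities apply to the two-point weight $w_{\ell-\alpha,\delta;\lambda}(x,y)=\min\{w_{\ell-\alpha,\delta;\lambda}(x),w_{\ell-\alpha,\delta;\lambda}(y)\}$, since the bounds hold pointwise and the minimum of two bounded ratios is a bounded ratio; this immediately yields the analogous comparison for $[\cdot]_{C^{0,\alpha}_{\ell,\delta;\lambda}}$. Summing the resulting estimates over $j=0,\ldots,k$ in the definition of $\|\cdot\|_{C^{k,\alpha}_{\ell,\delta;\lambda}}$ then gives both stated inequalities.

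I don't foresee a serious obstacle here; the only point requiring a small amount of care is checking that the comparison of weights survives intact when taking the minimum of the two point values (so that the Hölder seminorm really scales the same way as the $L^\infty$ norm), and that the interior constant from the region $r\leq\sqrt\lambda$ is indeed bounded below independently of $\lambda$. Both are immediate from the monotonicity observations above.
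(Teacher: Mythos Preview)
Your proof is correct and is essentially the same argument as the paper's: the paper invokes \autoref{Prop_Multiplication} with $g=1$ and the bounds $\|1\|_{C^{k,\alpha}_{0,\delta;\lambda}}\leq c\lambda^{\delta/2}$ and $\|1\|_{C^{k,\alpha}_{0,-\delta;\lambda}}\leq c$, which amounts precisely to your pointwise ratio computation $w_{\ell,\delta;\lambda}/w_{\ell,0;\lambda}=w_{0,\delta;\lambda}$. Your version is a bit more explicit in that it handles the H\"older seminorm and the min in the two-point weight by hand rather than absorbing them into the multiplication lemma, but the content is identical.
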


\begin{proof}
  Use $\|1\|_{C^{k,\alpha}_{0,\delta;\lambda}} \leq c\lambda^{\delta/2}$ and $\|1\|_{C^{k,\alpha}_{0,-\delta;\lambda}} \leq c$ for $\delta<0$.
\end{proof}

There are certain components of $\Omega^1(X,\fg_{E_\lambda})$ and $\Omega^2_7(X,\fg_{E_\lambda})$, which need to be treated separately.
The following definition identifies these components.

\begin{definition}
  Define $\mu_\lambda\co\Gamma(\fI^*V\fM)\to\Omega^1(X,\fg_{E_\lambda})$ by
  \begin{equation*}
    \mu_\lambda\hat\fI\coloneq \chi^+s_{1/\lambda}^*\hat\fI
  \end{equation*}
  and
  $\nu_\lambda\co\Gamma\(\Hom_\Phi(TQ,\fI^*V\fM)\)\to\Omega^2_7(X,\fg_{E_\lambda})$ by
  \begin{equation*}
    \nu_\lambda\hat\fT\coloneq \pi_7(\chi^+s_{1/\lambda}^*\hat\fT).
  \end{equation*}
  Here we first identify $\hat\fI\in\Gamma(\fI^*V\fM)$ with an element of $\Omega^1\(NQ,E(\fI)\)$, then view the restriction of its pullback via $s_{1/\lambda}$ to $U_{[0,\sigma)}$ as lying in $\Omega^1(V_{[0,\sigma)},\fg_{E_\lambda})$ and finally extended it to all of $X$ by multiplication with $\chi^+$; similarly we proceed with $\hat\fT$.
  
  Define $\pi_\lambda\co\Omega^1(X,\fg_{E_\lambda}) \to \Gamma(\fI^*V\fM)$ by
  \begin{equation*}
    (\pi_\lambda a)(x)\coloneq \sum_\kappa \int_{N_xQ} \<a,\mu_\lambda\kappa\> \kappa
  \end{equation*}
  and $\sigma_\lambda\co\Omega^2_7(X,\fg_{E_\lambda}) \to \Gamma\(\Hom_\Phi(TQ,\fI^*V\fM)\)$ by
  \begin{equation*}
    (\sigma_\lambda \alpha)(x)\coloneq \sum_\beta \int_{N_xQ}
    \<\alpha,\nu_\lambda\beta\> \beta,
  \end{equation*}
  Here $\kappa$ runs through an orthonormal basis of $V\fM_{\fI(x)}$ with respect to the inner product $\<\mu_\lambda\cdot,\mu_\lambda\cdot\>$ and $\beta$ runs through an orthonormal basis of $\Hom_\Phi\(T_xQ, V\fM_{\fI(x)}\)$ with respect to the inner product $\<\nu_\lambda\cdot,\nu_\lambda\cdot\>$.

  Clearly, $\pi_\lambda \mu_\lambda=\id$ and $\sigma_\lambda \nu_\lambda=\id$;
  hence,
  \begin{equation*}
    \bar\pi_\lambda\coloneq \mu_\lambda\pi_\lambda \qandq
     \bar\sigma_\lambda\coloneq \nu_\lambda\sigma_\lambda
  \end{equation*}
  are projections.
  We denote the complementary projections by
  \begin{equation*}
    \rho_\lambda \coloneq \id-\bar\pi_\lambda \qandq
    \tau_\lambda \coloneq \id-\bar\sigma_\lambda.
  \end{equation*}
\end{definition}

\begin{prop}
  \label{Prop_ProjectionBounds}
  For $\ell\leq-1$ and $\delta\in\R$ such that $\ell+\delta
  \in (-3,-1)$ there is a constant $c>0$ such that for all
  and $\lambda\in(0,\Lambda]$
  we have
  \begin{align*}
    \|\mu_\lambda\hat\fI\|_{C^{0,\alpha}_{\ell,\delta;\lambda}}
      &\leq c\lambda^{-1-\ell} \|\hat\fI\|_{C^{0,\alpha}} \qandq
    \|\pi_\lambda a\|_{C^{0,\alpha}}
      \leq c
      \lambda^{1+\ell-\alpha}\|a\|_{C^{0,\alpha}_{\ell,\delta;\lambda}(V_{[0,\sigma)})}
  \end{align*}
  as well as
  \begin{align*}
    \|\nu_\lambda\hat\fT\|_{C^{0,\alpha}_{\ell,\delta;\lambda}}
      &\leq c\lambda^{-1-\ell} \|\hat\fT\|_{C^{k,\alpha}} \qandq
    \|\sigma_\lambda \alpha\|_{C^{0,\alpha}}
      \leq c
      \lambda^{1+\ell-\alpha}\|\alpha\|_{C^{0,\alpha}_{\ell,\delta;\lambda}(V_{[0,\sigma)})}.
  \end{align*}
  In particular, $\bar\pi_\lambda$, $\rho_\lambda$,
  $\bar\sigma_\lambda$ and $\tau_\lambda$ are bounded by
  $c\lambda^{-\alpha}$ with respect to the
  $C^{0,\alpha}_{\ell,\delta;\lambda}$--norms.
\end{prop}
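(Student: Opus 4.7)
The plan is to reduce everything to pointwise estimates on $V_{[0,2\zeta)}$, since $\chi^+$ kills both $\mu_\lambda\hat\fI$ and $\nu_\lambda\hat\fT$ outside this tubular neighbourhood. Inside, use the exponential map to identify $V_{[0,\zeta)}$ with a neighbourhood of the zero section of $NQ$ and exploit the rescaling $s_{1/\lambda}$. The central input is \autoref{Prop_ASDDecay}: every tangent vector to the framed moduli space $M$ at a point $[I]$ decays pointwise as $|\nabla^k\cdot|=O(|y|^{-3-k})$. Applied fibrewise over $Q$, this yields, for $x$ near $Q$ with normal-distance $r=r(x)$,
\begin{equation*}
  |\nabla^k s_{1/\lambda}^*\hat\fI|(x) \leq c\,\lambda^{-1-k}\bigl(1+r/\lambda\bigr)^{-3-k}\|\hat\fI\|_{C^{k,\alpha}},
\end{equation*}
together with a similar estimate for $s_{1/\lambda}^*\hat\fT$ (with the $\lambda^{-1-k}$ replaced by $\lambda^{-2-k}$ because $\hat\fT$ is a $2$--form).

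First I would prove the bounds on $\mu_\lambda$ and $\nu_\lambda$. One simply checks the pointwise inequality $w_{\ell,\delta;\lambda}(x)|\mu_\lambda\hat\fI|(x)\leq c\,\lambda^{-1-\ell}$ in the two regions $r\leq\sqrt\lambda$ and $\sqrt\lambda<r\leq 2\zeta$. In the inner region $w_{\ell,\delta;\lambda}(x)=\lambda^\delta(\lambda+r)^{-\ell-\delta}$, so combining with the pointwise bound above gives $c\,\lambda^{\delta+2}(\lambda+r)^{-\ell-\delta-3}$, which is maximised at $r=0$ under $\ell+\delta>-3$ and equals $c\,\lambda^{-1-\ell}$. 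In the outer region $w=r^{-\ell+\delta}$ and the pointwise bound becomes $c\,\lambda^2 r^{-3}$, so the product $c\,\lambda^2 r^{-\ell+\delta-3}$ is maximised at the boundary $r=\sqrt\lambda$ (using $\ell+\delta<-1$) and again yields $c\,\lambda^{-1-\ell}$. The Hölder part is handled by the mean-value theorem applied on pairs $x,y$ satisfying $d(x,y)\leq\lambda+\min(r(x),r(y))$, using the gradient bound above and the weight $w_{\ell-\alpha,\delta;\lambda}$; the two points lie in the ``same scale'', so the difference quotient is controlled by a single pointwise gradient estimate. The argument for $\nu_\lambda$ is literally the same with one extra factor of $\lambda^{-1}$ from the form degree absorbed into the identical weight exponent $\lambda^{-1-\ell}$ (since $\hat\fT$ is counted as a $2$--form).

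Next I would turn to $\pi_\lambda$ and $\sigma_\lambda$. By construction $\pi_\lambda$ is, fibrewise in $x\in Q$, the composition of $L^2$--pairing against $\mu_\lambda\kappa$ (for an orthonormal basis $\kappa$ of $V\fM_{\fI(x)}$ with respect to $\langle\mu_\lambda\cdot,\mu_\lambda\cdot\rangle$) with expansion in that basis. The pointwise estimate above shows that each $|\mu_\lambda\kappa|(y)\leq c\,\lambda^{-1}(1+|y|/\lambda)^{-3}$, so $\|\mu_\lambda\kappa\|_{L^2(N_xQ)}^2\sim c\,\lambda^2\cdot\lambda^{-2}=c$ and the basis has uniformly bounded $L^2$--normalisation. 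By Cauchy--Schwarz,
\begin{equation*}
  |(\pi_\lambda a)(x)| \leq c\sum_\kappa \int_{N_xQ} |a|\,|\mu_\lambda\kappa| \leq c\,\|a\|_{C^{0,\alpha}_{\ell,\delta;\lambda}}\int_{N_xQ} w_{\ell,\delta;\lambda}^{-1}\,|\mu_\lambda\kappa|,
\end{equation*}
and the remaining weighted integral is computed explicitly in the two regions using the pointwise bound on $\mu_\lambda\kappa$; the hypothesis $\ell+\delta\in(-3,-1)$ is precisely what makes this integral converge, and a short calculation yields the factor $c\,\lambda^{1+\ell}$. An additional $\lambda^{-\alpha}$ emerges when controlling the Hölder seminorm of $\pi_\lambda a$ by differencing the integrand in the $x$--variable. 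The argument for $\sigma_\lambda$ is identical.

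Finally, the bounds on $\bar\pi_\lambda$, $\bar\sigma_\lambda$ and their complementary projections follow by composition: e.g.
\begin{equation*}
  \|\bar\pi_\lambda a\|_{C^{0,\alpha}_{\ell,\delta;\lambda}} \leq c\,\lambda^{-1-\ell}\|\pi_\lambda a\|_{C^{0,\alpha}} \leq c\,\lambda^{-\alpha}\|a\|_{C^{0,\alpha}_{\ell,\delta;\lambda}},
\end{equation*}
and $\rho_\lambda,\tau_\lambda$ inherit the same bound via the triangle inequality. The main obstacle is bookkeeping at the transition $r=\sqrt\lambda$ between the ``core'' and the ``outer'' region of the weight $w_{\ell,\delta;\lambda}$, and making sure the conditions $\ell\leq -1$ and $\ell+\delta\in(-3,-1)$ really are sharp: the lower bound $\ell+\delta>-3$ is forced by integrability of $|\mu_\lambda\kappa|^2$ against the reciprocal weight in the estimate for $\pi_\lambda$, while the upper bound $\ell+\delta<-1$ is what guarantees that the outer-region pointwise estimate for $\mu_\lambda$ is maximised at $r=\sqrt\lambda$ rather than at $r=2\zeta$.
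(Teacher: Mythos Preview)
Your proposal follows the same strategy as the paper's proof: use the pointwise decay from \autoref{Prop_ASDDecay} applied fibrewise over $Q$, split the estimate into the regions $r\leq\sqrt\lambda$ and $r>\sqrt\lambda$, and integrate. The paper streamlines the $\mu_\lambda$ estimate by first establishing $\|s_{1/\lambda}^*\hat\fI\|_{C^{0,\alpha}_{-3,0;\lambda}}\leq c\lambda^2$ and then invoking \autoref{Prop_Multiplication} with $\|\chi^+\|_{C^{0,\alpha}_{3+\ell,\delta;\lambda}}\leq c\lambda^{-3-\ell}$, rather than checking the two regions by hand; but your direct pointwise check is equivalent.

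There is one genuine bookkeeping error in your $\pi_\lambda$ paragraph. Your pointwise bound $|\mu_\lambda\kappa|\leq c\lambda^{-1}(1+|y|/\lambda)^{-3}$ holds for $\kappa$ with \emph{honest} $L^2$--norm equal to one; integrating then gives $\|\mu_\lambda\kappa\|_{L^2}^2\sim c\lambda^2$, not $c$ as you write. The basis elements $\kappa$ in the definition of $\pi_\lambda$ are orthonormal with respect to $\langle\mu_\lambda\cdot,\mu_\lambda\cdot\rangle$, hence satisfy $\|\kappa\|_{L^2}\sim\lambda^{-1}$ in the honest norm. This extra $\lambda^{-1}$ enters twice: once in the pointwise bound on $|\mu_\lambda\kappa|$ (so the pairing $\int\langle a,\mu_\lambda\kappa\rangle$ comes out as $c\lambda^{2+\ell}$, not $c\lambda^{3+\ell}$), and once more when you convert $|\sum_\kappa c_\kappa\kappa|$ back to the fibre norm on $V\fM$ (which is the honest $L^2$--norm). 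The two factors combine to give the claimed $\lambda^{1+\ell}$, but your sketch as written loses track of this. The paper handles it by explicitly noting $\|\kappa\|_{L^2}\leq c/\lambda$ from $\lambda^2\langle\kappa_1,\kappa_2\rangle_{L^2}\sim\langle\mu_\lambda\kappa_1,\mu_\lambda\kappa_2\rangle_{L^2}$.

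A minor point: your attribution of the conditions is reversed. The lower bound $\ell+\delta>-3$ is used in the $\mu_\lambda$ estimate (to make $\lambda^{\delta+2}(\lambda+r)^{-\ell-\delta-3}$ maximal at $r=0$), while the paper uses $\ell\leq-1$ together with $\ell+\delta<-1$ for the convergence of the integral in the $\pi_\lambda$ estimate.
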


\begin{proof}
  We only prove the first two estimates; the last two are identical up to a change in notation.
  From \autoref{Prop_ASDDecay} it follows at once that
  \begin{equation*}
    \|s_{1/\lambda}^*\hat\fI\|_{C^{0,\alpha}_{-3,0;\lambda}\(V_{[0,\sigma)}\)}
    \leq c \lambda^2 \|\hat\fI\|_{C^{0,\alpha}}.
  \end{equation*}
  The first inequality thus is a consequence of
  \autoref{Prop_Multiplication} since
  $\|\chi^+_t\|_{C^{0,\alpha}_{3+\ell,\delta;\lambda}} \leq c
  \lambda^{-3-\ell}$ for $\ell+\delta>-3$.  

  To prove the second inequality, note that by
  \autoref{Prop_ASDDecay} for $\kappa\in(V\fM_t)_{\fI_t(x)}$ we
  have $|s_{1/\lambda}^*\kappa|(y) \leq
  c\lambda^2/(\lambda+|y|)^3\|\kappa\|_{L^2}$ and thus
  \begin{align*}
    \int_{N_xQ} \<a,\chi^+s_{1/\lambda}^*\kappa\>
    &\leq c \int_0^{\sqrt\lambda}
    \lambda^{2-\delta}(\lambda+r)^{\ell+\delta-3}r^3 \rd r 
        \cdot \|a\|_{L^\infty_{\ell,\delta;\lambda}}\|\kappa\|_{L^2}\\
    &\qquad +c \int_{\sqrt\lambda}^\sigma \lambda^2
    r^{\ell-\delta}(\lambda+r)^{-3} r^3 \rd r
     \cdot \|a\|_{L^\infty_{\ell,\delta;\lambda}}\|\kappa\|_{L^2} \\
    &\leq c\lambda^{3+\ell} \|a\|_{L^\infty_{\ell,\delta;\lambda}}\|\kappa\|_{L^2}
  \end{align*}
  since $\ell\leq-1$ and $\ell+\delta<-1$.  If $\kappa$ is an element
  of an orthonormal basis of $(V\fM)_{\fI(x)}$ with respect to
  $\<\mu_{\lambda}\cdot,\mu_{\lambda}\cdot\>$, then
  $\|\kappa\|_{L^2}\leq c/\lambda$ since for $\kappa_1,\kappa_2 \in
  (V\fM)_{\fI(x)}$
  \begin{equation*}
    \lambda^2 \<\kappa_1,\kappa_2\>_{L^2} \sim
    \<\chi^+s_{1/\lambda}^*\kappa_1,\chi^+s_{1/\lambda}^*\kappa_2\>_{L^2}
  \end{equation*}
  where $\sim$ means comparable uniformly in $\lambda$.  Therefore,
  \begin{equation*}
    \|\pi_{\lambda} a\|_{L^\infty} \leq c \lambda^{1+\ell}
    \|a\|_{L^\infty_{\ell,\delta;\lambda}}.
  \end{equation*}
  The estimates on the Hölder norms follow by the same kind of
  argument.
\end{proof}

Ultimately, we will be working with the following function spaces.

\begin{definition}
  Denote by $\fX_\lambda$ and $\fY_\lambda$ the Banach spaces
  $C^{1,\alpha}\Omega^1(X,\fg_{E_\lambda})$ and
  $C^{0,\alpha}\Omega^0(X,\fg_{E_\lambda})\oplus
  C^{0,\alpha}\Omega^2_7(X,\fg_{E_\lambda})$
  equipped with the norms
  \begin{align*}
    \|a\|_{\fX_\lambda}
    &\coloneq \lambda^{-\delta/2}\|\rho_\lambda a\|_{C^{1,\alpha}_{-1,\delta;\lambda}}
    +\lambda\|\pi_\lambda a\|_{C^{1,\alpha}}
    \qand \\
    \|(\xi,\alpha)\|_{\fY_\lambda}
    &\coloneq \lambda^{-\delta/2}\|\xi\|_{C^{0,\alpha}_{-2,\delta;\lambda}}
    +\lambda^{-\delta/2}\|\tau_\lambda \alpha\|_{C^{0,\alpha}_{-2,\delta;\lambda}}
    +\lambda\|\sigma_\lambda \alpha\|_{C^{0,\alpha}},
  \end{align*}
  respectively.
  Here we fix $\delta\in(-1,0)$ and $0<\alpha\ll|\delta|$;
  for concreteness, let us take $\delta=-\frac12$ and $\alpha=\frac{1}{256}$.  
\end{definition}

\begin{remark}
  We choose the factor $\lambda^{-\delta/2}$ in view of \autoref{Cor_DeltaNorm}.
\end{remark}

\subsection{Error estimate}

\begin{prop}\label{Prop_ErrorEstimate}
  There exists a constant $c>0$ such that for all $\lambda\in(0,\Lambda]$
  \begin{equation*}
    \|\pi_7(F_{A_\lambda})\|_{C^{0,\alpha}_{-2,0;\lambda}} \leq c \lambda^2;
  \end{equation*}
  in particular,
  \begin{equation*}
    \|\pi_7(F_{A_\lambda})\|_{\fY_\lambda} \leq c \lambda^{2-\alpha}.
  \end{equation*}
\end{prop}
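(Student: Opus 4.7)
The plan is to partition $X$ according to the support of the two cut-off functions $\chi^-_\lambda$ and $\chi^+$ and to estimate $\pi_7(F_{A_\lambda})$ region by region. There are essentially four regions to consider: the far exterior $V_{(2\zeta,\infty)}$ where $A_\lambda = A_0$, the deep interior $V_{[0,\lambda]}$ where $A_\lambda = I_\lambda$, the bulk overlap $V_{[2\lambda,\zeta]}$ where $A_\lambda = A_0 + i_\lambda$, and two narrow transition rings $V_{[\lambda,2\lambda]}$ and $V_{[\zeta,2\zeta]}$ where the cut-offs interpolate. The far exterior contributes nothing since $A_0$ is a $\Spin(7)$--instanton.

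The heart of the argument lies in the deep interior. I would work in normal coordinates at a point $x \in Q$, identifying a neighbourhood with $T_xX = T_xQ \oplus N_xQ$ and comparing the ambient $\Spin(7)$--structure $\Phi$ with the flat model $\Phi_0$ of \autoref{Ex_HyperkahlerPair}. Decomposing $F_{I_\lambda}$ into its $(2,0)$, $(1,1)$ and $(0,2)$ components and invoking \autoref{Prop_InstantonComponentwise}, the flat-model projection $\pi_7^0(F_{I_\lambda})$ breaks into: the $\Lambda^2_3$ part, which vanishes on $F^{0,2}$ because $I(\fI)$ is fibrewise ASD and which is of size $\lambda^4(\lambda+r)^{-4}$ on $F^{2,0}$ via the explicit expression from \autoref{Prop_UniversalConnection} combined with the decay of ASD zero modes from \autoref{Prop_ASDDecay}; and the $\Lambda^2_4$ part on $F^{1,1}$, which by \autoref{Thm_Haydys2} is exactly $\fF\fI = 0$. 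The genuine error $\pi_7(F_{I_\lambda})$ differs from this flat model by the $O(r^2)$ perturbation of $\Phi$ multiplied by $|F_{I_\lambda}| = O(\lambda^2(\lambda+r)^{-4})$, and by an $O(r)$ contribution from the higher-order Taylor terms of the Fueter equation off $Q$; in either case the error is dominated by $\lambda^2(\lambda+r)^{-2}$, matching the weight $w_{-2,0;\lambda}$.

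On the bulk overlap $V_{[2\lambda,\zeta]}$ one has $F_{A_\lambda} = F_{A_0} + d_{A_0}i_\lambda + \frac12[i_\lambda\wedge i_\lambda]$; since $\pi_7(F_{A_0}) = 0$, the principal term to estimate is $\pi_7(d_{A_0}i_\lambda)$. The connection $i_\lambda$ is the radial extension of $\fI$ from its framing at infinity, and \autoref{Prop_ASDDecay} gives $|i_\lambda| \leq c\lambda^2 r^{-3}$ and $|di_\lambda|\leq c\lambda^2 r^{-4}$. The Fueter condition $\fF\fI = 0$ is precisely what forces the leading order of $\pi_7(d_{A_0}i_\lambda)$ to cancel, leaving $|\pi_7(d_{A_0}i_\lambda)|\leq c\lambda^2 r^{-2}$. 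The bracket term $[i_\lambda\wedge i_\lambda]$ is of order $\lambda^4 r^{-6}$, well inside the required bound. On the transition rings the cut-off derivatives contribute: on the inner ring $|a| = O(r)$ (since $A_0 - A_0|_Q = O(r)$ in radial gauge), combined with $|d\chi^-_\lambda| = O(\lambda^{-1})$ and $r\sim\lambda$, gives an $O(1)$ contribution absorbed by $r^2\sim\lambda^2$; on the outer ring both $i_\lambda$ and $di_\lambda$ are already of order $\lambda^2$, so the contribution is trivially controlled.

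Assembling the regional bounds yields $\|\pi_7(F_{A_\lambda})\|_{L^\infty_{-2,0;\lambda}}\leq c\lambda^2$, and the corresponding Hölder-seminorm estimate is obtained by the same case analysis applied to difference quotients, exploiting the rescaling heuristic discussed after \autoref{Def_GlobalWeightedNorms}. The translation to the $\fY_\lambda$-norm then combines \autoref{Cor_DeltaNorm} with the projection-operator bounds from \autoref{Prop_ProjectionBounds}; the small loss of $\lambda^{-\alpha}$ originates in the factors $\tau_\lambda$ and $\sigma_\lambda$ appearing in the definition of $\fY_\lambda$. The main obstacle I anticipate is the careful Taylor-expansion bookkeeping in the deep interior: one must expand $\Phi$ to sufficient order and verify that the Fueter condition genuinely cancels the top-order term rather than merely its principal symbol, while simultaneously ensuring that the cut-off derivatives on the inner transition ring produce no loss in the $\lambda$-exponent.
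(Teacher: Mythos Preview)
Your regional decomposition and the overall strategy---use the Fueter condition for the $(1,1)$ component, ASD for the $(0,2)$ component, and decay for the rest---matches the paper's approach. The paper organises things slightly differently by first isolating a single ``main term'' $\tilde e_\lambda := \pi_7(F_{I_\lambda}-F_{A_0|_Q})$ and estimating it uniformly on $V_{[0,\sigma)}$ (\autoref{Prop_Pi7TaylorExpansion} and \autoref{Prop_CurvatureEstimate}), and only afterwards handling $e_\lambda-\tilde e_\lambda$ region by region; but this is a matter of bookkeeping.

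There are, however, two genuine gaps in your reasoning. First, your claim that the $\Lambda^2_3$--part of $F_{I_\lambda}^{2,0}$ is of size $\lambda^4(\lambda+r)^{-4}$ via \autoref{Prop_UniversalConnection} is wrong: the $(2,0)$--curvature of $I_\lambda$ is not just the pullback of $F_{\bA}^{2,0}$, it also contains the curvature $F_{A_0|_Q}$ of the framing connection, which is $O(1)$ and whose self-dual part on $Q$ need not vanish. The correct statement (\autoref{Prop_CurvatureEstimate}) is that $F_{I_\lambda}^{2,0}-F_{A_0|_Q}$ decays like $\lambda^2(\lambda+r)^{-2}$; this is why the paper subtracts $F_{A_0|_Q}$ in the definition of $\tilde e_\lambda$.

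Second, and more seriously, the perturbation $\pi_7-\pi_7^0$ is \emph{not} $O(r^2)$ as you assert: the first-order Taylor term $\pi_7^1$, coming from the second fundamental form of $Q$, is $O(r)$. If you only had $\pi_7-\pi_7^0=O(r)$ acting on $|F_{I_\lambda}^{0,2}|=O(\lambda^2(\lambda+r)^{-4})$, the weighted estimate would lose a full power of $\lambda$. What saves the day is the observation in \autoref{Prop_Pi7TaylorExpansion} that $\pi_7^1$ \emph{vanishes on} $\Lambda^-N^*Q$; since $F_{I_\lambda}^{0,2}$ is anti-self-dual, only $\pi_7^{\geq 2}=O(r^2)$ contributes to the $(0,2)$ part, while the $O(r)$ term $\pi_7^1$ is harmless on the $(1,1)$ part because $|F_{I_\lambda}^{1,1}|=O(\lambda^2(\lambda+r)^{-3})$ already has one fewer power of $(\lambda+r)^{-1}$. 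You need this structural fact explicitly; the ``$O(r^2)$ perturbation of $\Phi$'' shortcut does not hold.
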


\begin{remark}
  With more work the exponent can be improved from $2-\alpha$ to $2$.
\end{remark}

The proof of this result requires some preparation.

\begin{prop}
  \label{Prop_Pi7TaylorExpansion}
  In the tubular neighbourhood $V_{[0,\zeta)}$ of $Q$ we can write the Taylor expansion of $\pi_7$ in the direction transverse to $Q$ as
  \begin{equation*}
    \pi_7=\pi_7^0+\pi_7^1+\pi_7^{\geq 2}
  \end{equation*}
  where $\pi_7^0$ denotes the zeroth order term,
  $\pi_7^1$ denotes the first order term and vanishes on $\Lambda^- N^*Q$ and 
  $\pi_7^{\geq 2}$ denotes the remainder term;
  moreover, there is a constant $c>0$ which is independent of $\lambda\in(0,\Lambda]$ such that
  \begin{equation*}
    \|\pi_7^0\|_{C^{0,\alpha}_{0,0;\lambda}(V_{[0,\zeta)})}
    +\|\pi_7^1\|_{C^{0,\alpha}_{1,0;\lambda}(V_{[0,\zeta)})}
    + \|\pi_7^{\geq 2}\|_{C^{0,\alpha}_{2,0;\lambda}(V_{[0,\zeta)})}
    \leq c.
  \end{equation*}
\end{prop}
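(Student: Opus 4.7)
The strategy is to use the exponential identification of $V_{[0,\zeta)}$ with a neighbourhood of the zero section in $NQ$ and the fibrewise linear structure to Taylor expand $\Phi$ in the normal coordinate, and then to transport this expansion through the algebraic formula ``$\pi_7 = $ projection onto the $+3$--eigenspace of $*(\cdot\wedge\Phi)$''. I would first fix local orthonormal frames of $TQ$ and $NQ$ over an open patch of $Q$ and extend them to $V_{[0,\zeta)}$ by the fibrewise linear structure of $NQ$; in this trivialisation the pullback $\pi^*\Lambda^2T^*X|_Q$ (with $\pi\co V_{[0,\zeta)}\to Q$ the tubular projection) makes sense as a constant-in-the-fibre bundle, and $\exp^*\Phi$ has a Taylor expansion $\Phi^{(0)}+\Phi^{(1)}+\Phi^{(\geq 2)}$ where $\Phi^{(i)}$ is homogeneous of degree $i$ in the normal coordinate and $\Phi^{(0)}$ is pointwise the hyperk\"ahler pair form of \autoref{Ex_HyperkahlerPair}. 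Smoothness of the eigenspace projection as a function of the defining $4$--form then propagates this to the desired decomposition $\pi_7 = \pi_7^0 + \pi_7^1 + \pi_7^{\geq 2}$.

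The key content is the vanishing of $\pi_7^1$ on $\Lambda^- N^*Q$. By \autoref{Prop_Lambda2SplittingHyperkahlerPair}, $\Lambda^- N^*Q\subset\ker\pi_7^0$, so for $\beta\in\Lambda^- N^*Q$ the value $\pi_7^1(\beta)$ is the linearisation of the eigenspace projection applied to $\Phi^{(1)}$. Since $\Phi$ is $\nabla$--parallel, $\Phi^{(1)}$ is determined purely by the geometric data of the embedding $Q\hookrightarrow X$ at first order---concretely, by the second fundamental form of $Q$ paired with the normal coordinate, together with the discrepancy between the fibrewise linear trivialisation and radial parallel transport. I would then verify, by an explicit linear-algebra computation in the standard hyperk\"ahler pair model on $\R^4\times\R^4$, that the induced first-order perturbation of the $+3$--eigenspace of $*(\cdot\wedge\Phi^{(0)})$ annihilates $\Lambda^- N^*Q$. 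This reflects an infinitesimal Cayley property: deforming a Cayley $4$--plane at $q$ to first order by any second fundamental form along any normal translation keeps it Cayley modulo pieces paired trivially with $\Lambda^- N^*Q$ under the hyperk\"ahler pair structure.

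The weighted H\"older estimates are then essentially a bookkeeping exercise. $\pi_7^0$ is pulled back from a smooth tensor on $Q$, while $\pi_7^1 = O(r)$ and $\pi_7^{\geq 2} = O(r^2)$ with all covariant derivatives bounded uniformly on the compact tubular neighbourhood; by construction the weight $w_{i,0;\lambda}$ satisfies $w_{i,0;\lambda}\le c\max((\lambda+r)^{-i},r^{-i})$, so each product $w_{i,0;\lambda}\pi_7^i$ is bounded independently of $\lambda$, and the $C^{0,\alpha}$ part is inherited from the usual H\"older smoothness of $\Phi$ via a standard scaling argument against the definition of the weighted seminorm. The main obstacle is therefore the algebraic vanishing of $\pi_7^1$ on $\Lambda^- N^*Q$: this is the content of the proposition that does not reduce to generic tubular-neighbourhood calculus and requires unwinding the Cayley condition at first order against the $\Spin(7)$--equivariant structure of the $+3$--eigenspace projection.
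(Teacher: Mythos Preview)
Your approach is correct and would yield a proof, but it takes a more circuitous route than the paper and somewhat overstates where the difficulty lies. You propose to Taylor expand $\exp^*\Phi$ and then run first-order eigenspace perturbation theory for the $+3$--eigenprojection, leaving the vanishing of $\pi_7^1$ on $\Lambda^-N^*Q$ to an explicit model computation tied to an ``infinitesimal Cayley property''. The paper instead Taylor expands the \emph{derivative} of the exponential identification directly: to first order in the normal coordinate $v$ this is the identity plus a correction $A$ which is $\rII_v\in\End(TQ)$ on the $TQ$--block and \emph{zero} on the $NQ$--block. Since $\Phi$ is parallel, $\pi_7$ in the product frame is the conjugate of the constant model projection $\pi_7^0$ by $I+A+O(|v|^2)$, so $\pi_7^1=[\pi_7^0,\tilde A]$ with $\tilde A$ the induced derivation on $\Lambda^2$. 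Because $A|_{NQ}=0$ and $A(TQ)\subset TQ$, one has $\tilde A\beta=0$ for every $\beta\in\Lambda^2N^*Q$; combined with $\pi_7^0\beta=0$ for $\beta\in\Lambda^-N^*Q$ this gives $\pi_7^1\beta=0$ immediately, with no eigenspace perturbation or model computation needed.

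In particular, the vanishing does \emph{not} require ``unwinding the Cayley condition at first order'': the only $\Spin(7)$/Cayley input is the zeroth-order fact $\Lambda^-N^*Q\subset\ker\pi_7^0$, while the block structure of the first-order correction $A$ is generic Fermi-coordinate geometry (your parenthetical worry about an extra ``discrepancy between the fibrewise linear trivialisation and radial parallel transport'' is unfounded---the $NQ$--directions have no first-order correction, as the relevant Jacobi fields have $J(0)=0$ and the next term is curvature, hence $O(|v|^2)$). Your weighted H\"older bookkeeping is fine.
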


\begin{proof}
  If we pull the identity map of a tubular neighbourhood of $Q$ back to a tubular neighbourhood of the zero section of $NQ$ via the exponential map, then the Taylor expansion of its derivative around $Q$ can be expressed in the splitting $TNQ=\pi_1^*TQ\oplus\pi_2^*NQ$ as
  \begin{equation*}
    (x,y) \mapsto (x, y) + \(\rII_y(x),y\) + O\(|y|^2\)
  \end{equation*}
  where $\rII$ is the second fundamental form of $Q$ in $X$, which we
  think of as a map from $NQ$ to $\End(TQ)$.
  This immediately yields the desired expansion of $\pi_7$ near $Q$ with $\pi_7^1$ vanishing on $\Lambda^- N^*Q$.
\end{proof}

\begin{prop}\label{Prop_CurvatureEstimate}
  There is a constant $c>0$ such that for all $t\in(-T',T')$ and $\lambda\in(0,\Lambda]$ we have
  \begin{align*}
    \Abs*{F_{I_\lambda}^{2,0}-F_{A_0|Q}}_{C^{0,\alpha}_{-2,0;\lambda}(V_{[0,\sigma)})}
    &\leq c\lambda^2,  \\
    \Abs*{F_{I_\lambda}^{1,1}}_{C^{0,\alpha}_{-3,0;\lambda}(V_{[0,\sigma)})} 
    &\leq c \lambda^2 \qand \\
    \Abs*{F_{I_{t,\lambda}}^{0,2}}_{C^{0,\alpha}_{-4,0;\lambda}(V_{[0,\sigma)})}
    &\leq c \lambda^2.
  \end{align*}
\end{prop}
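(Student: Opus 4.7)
The proof is a scaling argument: I would convert known pointwise decay estimates for the curvature of the unrescaled reference connection $I(\fI)$ on $N_\infty Q$ into the claimed weighted $C^{0,\alpha}$ estimates for $I_\lambda$, using that, up to canonical bundle identification, $I_\lambda$ is the pullback of $I(\fI)$ by the fibrewise dilation $s_{1/\lambda}\co NQ\to NQ$, $v\mapsto v/\lambda$. Throughout I identify $V_{[0,\sigma)}$ with a neighbourhood of the zero section of $NQ$ via the exponential map, so that the bi-grading $T V_{[0,\sigma)}= \pi_1^*TQ\oplus\pi_2^*NQ$ agrees fibrewise with the bi-grading of \autoref{Prop_UniversalConnection} after choosing a normal frame.

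The first step is to establish pointwise decay for $F_{I(\fI)}$ at a fibre point $v\in N_xQ$ with $|v|$ large. The three components are all controlled by \autoref{Prop_UniversalConnection} and \autoref{Prop_ASDDecay}: $F^{0,2}_{I(\fI)}$ is the curvature of the fibrewise ASD instanton of unit scale and therefore satisfies $|F^{0,2}_{I(\fI)}|(v)\le C(1+|v|)^{-4}$; $F^{1,1}_{I(\fI)}$ is pointwise evaluation of elements of $\ker\delta_I$, which decay like $|v|^{-3}$ by \autoref{Prop_ASDDecay}; and $F^{2,0}_{I(\fI)}-F_{A_\infty}$ equals $-2\Delta_I^{-1}\langle[a,b]\rangle$ with $a,b\in\ker\delta_I$ decaying like $|v|^{-3}$, so that the source decays like $|v|^{-6}$ and convolution with the four-dimensional Green's function $|v|^{-2}$ yields $|F^{2,0}_{I(\fI)}-F_{A_\infty}|(v)\le C(1+|v|)^{-2}$. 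Because $\fI$ is smooth and $Q$ is compact, the analogous bounds on covariant derivatives hold with the same decay rates, the base-direction derivative being controlled by differentiating through the construction of \autoref{Thm_Haydys}.

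Applying $s_{1/\lambda}$ multiplies the component of bi-degree $(p,q)$ pointwise by $\lambda^{-q}$ and pre-composes with $v\mapsto v/\lambda$. Since $A_\infty=A_0|_Q$ under the pregluing framing, this immediately yields, at $x\in V_{[0,\sigma)}$ with $r(x)=|v|$,
\begin{align*}
  |F_{I_\lambda}^{0,2}|(x)
  &= \lambda^{-2}|F^{0,2}_{I(\fI)}|(x/\lambda) \le C\lambda^2(\lambda+r(x))^{-4}, \\
  |F_{I_\lambda}^{1,1}|(x)
  &= \lambda^{-1}|F^{1,1}_{I(\fI)}|(x/\lambda) \le C\lambda^2(\lambda+r(x))^{-3}, \\
  |F_{I_\lambda}^{2,0}-F_{A_0|_Q}|(x)
  &= |F^{2,0}_{I(\fI)}-F_{A_\infty}|(x/\lambda) \le C\lambda^2(\lambda+r(x))^{-2}.
\end{align*}
These are precisely the $L^\infty_{-k,0;\lambda}$ bounds required, since $w_{-k,0;\lambda}(x)\asymp (\lambda+r(x))^k$ throughout $V_{[0,\sigma)}$.

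Finally, to upgrade to the weighted Hölder seminorm one repeats the same scaling argument for $\nabla F_{I(\fI)}$, whose decay at $|v|$ exceeds that of $F$ by one power of $(1+|v|)^{-1}$; the mean value theorem applied on scales $d(x,y)\le \lambda+\min\{r(x),r(y)\}$ (on which the pullback distance in the rescaled picture is $\lesssim 1+|v|$) then produces exactly the extra factor $(\lambda+r)^{-\alpha}$ encoded in the shift $\ell\mapsto\ell-\alpha$ of \autoref{Def_GlobalWeightedNorms}. The main obstacle, and the part requiring the most care, is the uniformity of the derivative bounds in the base direction: differentiation on $Q$ couples through the connection on $\fM$ and the Green's function in the $\Delta_I^{-1}\langle[a,b]\rangle$ expression depends on $x\in Q$ through $\fI(x)$, so one must check that the constants in the decay estimates can be taken independent of $x$. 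Compactness of $Q$ together with smoothness of $\fI$ and of the universal family of \autoref{Prop_UniversalConnection} over $M$ reduces this to a finite cover argument, after which the rest of the proof is bookkeeping of scaling exponents.
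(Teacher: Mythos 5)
Your proposal is correct and follows essentially the same route as the paper: establish pointwise decay of the curvature components of the unrescaled connection $I(\fI)$ (via \autoref{Prop_ASDDecay}, \autoref{Thm_Haydys} and the quartic decay of finite-energy ASD curvature), then convert these into the weighted bounds for $I_\lambda$ by the scaling behaviour of $(p,q)$--components under $s_{1/\lambda}$. The only cosmetic difference is that you derive the $(1+|y|)^{-2}$ decay of $F^{2,0}_{I(\fI)}-F_{A_0|_Q}$ from the Green's-function representation $-2\Delta_I^{-1}\<[a,b]\>$ of \autoref{Prop_UniversalConnection}, whereas the paper reads it off from first-order vanishing along the section at infinity; both give the same exponent.
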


\begin{proof}
  \autoref{Thm_Haydys} asserts that the restriction of $I = I(\fI)$ to the section at infinity agrees with $A_0|_Q$.
  For a local coordinate system $(z_1,\ldots,z_4,w_1,\ldots,w_4)$ based at a point on the section at infinity and with $z_i$ denoting the coordinates along $Q$ and $w_i$ denote transverse coordinates we can write
  \begin{equation*}
    I = A_0|_Q + \sum_{i,j=1}^4 w_i(\xi_{ij} \rd z_j + \eta_{ij} \rd w_j) + O(|w|^2)
  \end{equation*}
  for $\xi_{ij},\eta_{ij} \in \fg$.
  It follows that $F_I^{1,1} = -\sum_{i,j=1}^4 \xi_{ij} \rd z_i \wedge \rd w_j + O(|w|)$.
  However, by \autoref{Prop_UniversalConnection} and \autoref{Prop_ASDDecay}, when viewed from the zero section the curvature component $F_I^{1,1}$ decays like $r^{-3}$.
  This translates into $\xi_{ij} = 0$, and we can write
  \begin{equation}
    \label{Eq_IFromInfty}
    I = A_0|_Q + \sum_{i,j=1}^4 \eta_{ij} w_i\rd w_j + O(|w|^2).
  \end{equation}

  This means that, $F_I^{2,0}-F_{A_0|_Q}$ vanishes to first order along the section at infinity which when viewed from the zero section in $NQ$ means that
  \begin{equation*}
    \left|F_I^{2,0}-F_{A_0|_Q}\right| \leq \frac{c}{1+|w|^2}.
  \end{equation*}
  The first estimate now follows from a simple scaling consideration.

  The last two estimates follow from simple scaling considerations using \autoref{Prop_ASDDecay} and \autoref{Thm_Haydys} together with the fact that the curvature of a finite energy ASD instanton decays at least like $|y|^{-4}$.
\end{proof}

\begin{prop}\label{Prop_ConnectionEstimate}
  There is a constant $c>0$ such that for all $\lambda\in(0,\Lambda]$ we have
  \begin{align*}
    \|i_\lambda\|_{C^{0,\alpha}_{-3,0;\lambda}(V_{(\lambda,\sigma)})}
    +\|\rd_{I_\lambda}i_\lambda\|_{C^{0,\alpha}_{-4,0;\lambda}(V_{(\lambda,\sigma)})}
    &\leq c\lambda^2 \qand \\
    \|a\|_{C^{0,\alpha}_{1,0;\lambda}(V_{[0,\sigma)})}
    +\|\rd_{A_0|_Q} a\|_{C^{0,\alpha}_{0,0;\lambda}(V_{[0,\sigma)})}
    &\leq c.
  \end{align*}
\end{prop}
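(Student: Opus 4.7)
The plan is to treat the two groups of estimates separately: the bounds on $a$ come from Taylor-expanding $A_0$ transverse to $Q$, while the bounds on $i_\lambda$ come from the asymptotic decay of the finite-energy ASD instantons underlying $I(\fI)$, combined with the scaling behaviour of $s_{1/\lambda}$ on $NQ$.

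For the second pair of inequalities, observe that on $V_{[0,\zeta)}$ we write $a = A_0 - \pi^*(A_0|_Q)$, where $\pi\co V_{[0,\zeta)} \to Q$ is the nearest-point projection coming from the exponential identification with a tubular neighbourhood of the zero section in $NQ$. Since $A_0$ is a smooth connection and $a$ vanishes identically on $Q$, Taylor expansion in normal coordinates yields $|a|(x) \leq c\,r(x)$ and $|\nabla^{k} a|(x) \leq c$ for $k\ge 1$, uniformly on $V_{[0,\zeta)}$. This translates immediately into $\|a\|_{C^{0,\alpha}_{1,0;\lambda}(V_{[0,\sigma)})} \leq c$ and $\|\rd_{A_0|_Q} a\|_{C^{0,\alpha}_{0,0;\lambda}(V_{[0,\sigma)})} \leq c$, using that for $\delta = 0$ the weight functions degenerate to $w_{\ell,0;\lambda}(x) \sim (\lambda+r)^{-\ell}$ and that the H\"older quotients are taken over $d(x,y) \leq \lambda + \min\{r(x),r(y)\}$, on which scales the derivatives of $a$ are uniformly bounded.

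For the first pair, I would start from \autoref{Thm_Haydys}, which supplies a framing $f\co E(\fI)|_{\infty} \to E_\infty$ such that the connection $I = I(\fI)$ restricted to the section at infinity is identified with $A_0|_Q$. Using radial parallel transport out of infinity in each fibre $N_xQ$ to trivialise $E(\fI)$ in the asymptotic region, the connection one-form $i := I - A_\infty$ (where $A_\infty$ denotes the pullback of $A_0|_Q$ by the bundle projection $NQ\to Q$) lies in the kernel of $\delta_I$ on each fibre and decays at infinity. \autoref{Prop_ASDDecay}, applied fibrewise (together with smooth dependence on the base point $x\in Q$, which controls $Q$-derivatives), then gives $|\nabla^k i|(x,y) \leq c(1+|y|)^{-3-k}$ uniformly in $x$. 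Pulling back by $s_{1/\lambda}\co (x,y)\mapsto (x,y/\lambda)$, a one-form with $|y|^{-3}$ decay becomes a one-form of size $c\lambda^2(\lambda+|y|)^{-3}$ (the transverse $1/\lambda$ rescaling of the cotangent direction exactly produces the extra $\lambda$, and a factor $\lambda^{-3}$ from the rescaled decay combines with the $\lambda^3$ from substituting $y/\lambda$ to give $\lambda^2$), and likewise the covariant derivative $\rd_{I_\lambda} i_\lambda$ picks up decay $c\lambda^2(\lambda+|y|)^{-4}$ on the overlap $V_{(\lambda,\sigma)}$. Since $A_\infty$ is the pullback from $Q$, it is $s_{1/\lambda}$-invariant, so $i_\lambda = s_{1/\lambda}^*(I-A_\infty)$, and the above decay becomes precisely the weighted $L^\infty$ bounds with $\ell=-3$ and $\ell=-4$ respectively.

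The main technical obstacle is getting the H\"older seminorm estimates in the correct weighted form. Here I would use a rescaling argument: on a ball of radius $\sim \lambda + r(x)$ around any $x$ with $r(x) > \lambda$, pull everything back by $s_\lambda$ (centred at the base-point projection in $Q$), reducing the problem to a uniform $C^{0,\alpha}$ estimate for $i$ and its derivative on an annulus of inner and outer radii comparable to $|y|$ in the asymptotic region of $N_\infty Q$. Standard elliptic regularity for the ASD equation, combined with the $C^0$ decay of $|\nabla^k i|$ noted above, then supplies the H\"older bound, which undoes the rescaling to produce the weighted H\"older norm claimed. The same rescaling machinery applies to the Taylor-expansion bounds for $a$, though there the estimates are immediate because $a$ is smooth up to $Q$.
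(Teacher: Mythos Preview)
Your overall strategy---radial gauge plus decay plus scaling---is exactly what the paper does, and your treatment of $a$ is fine: in radial gauge from the zero section, $a$ vanishes along $Q$, so Taylor expansion gives the claimed weighted bounds.

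There is, however, a genuine error in your argument for $i_\lambda$. You assert that the fibrewise restriction of $i = I - A_\infty$ lies in $\ker\delta_I$ and then invoke \autoref{Prop_ASDDecay}. This is not correct: $\ker\delta_I$ consists of infinitesimal \emph{deformations} of the ASD instanton (tangent vectors to the moduli space), not the connection one-form itself. There is no reason for $i|_{N_xQ}$ to satisfy $\rd_I^* i = 0$ or $\rd_I^+ i = 0$. You may be conflating $i$ with the sections $\hat\fI \in \Gamma(\fI^*V\fM)$, which \emph{do} lie in $\ker\delta_I$ fibrewise by construction.

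The decay rate $|i|\leq c(1+|y|)^{-3}$ you want is nevertheless true, and for the reason the paper names: radial gauge from the section at infinity. In that gauge the radial component of $i$ vanishes, so the tangential components can be recovered by integrating the curvature in from infinity, $i_j(r) = -\int_r^\infty (F_I)_{rj}\,\rd s$. Since the curvature of a finite-energy ASD instanton on $\R^4$ decays like $|y|^{-4}$ (Uhlenbeck), this integral gives $|i|\leq c|y|^{-3}$, and differentiating gives the higher-order bounds. Once you have this, your scaling computation and your H\"older argument by rescaling to unit scale go through as written.
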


\begin{proof}
  The first estimate follows from \eqref{Eq_IFromInfty} and a simple scaling consideration, while the last follows from the fact that we put $A_0$ into radial gauge from zero section in $NQ$.
\end{proof}

\begin{proof}[Proof of \autoref{Prop_ErrorEstimate}]
  We proceed in four steps.
  First we estimate an approximation $\tilde e_\lambda$ of
  \begin{equation*}
    e_\lambda \coloneq \pi_7(F_{A_\lambda}).
  \end{equation*}
  Then we estimate the difference $e_\lambda-\tilde e_\lambda$ separately in the three subsets $V_{[0,\lambda)}$, $V_{[\lambda,\sigma/2)}$ and $V_{[\sigma/2,\sigma)}$ constituting $V_{[0,\sigma)}$ which contains the support of $e_\lambda$.

  It will be convenient to use the following shorthand notation
  \begin{equation*}
    \|f\|_{\ell,U} \coloneq \|f\|_{C^{0,\alpha}_{\ell,0;\lambda}(U)}.
  \end{equation*}
  Note that if $(f,g)\mapsto f\cdot g$ is a bilinear map satisfying $|f\cdot g|\leq|f||g|$, then it follows from \autoref{Prop_Multiplication} that $\|f\cdot g\|_{\ell_1+\ell_2,U}\leq \|f\|_{\ell_1,U}\cdot \|g\|_{\ell_2,U}$.

  \setcounter{step}{0}
  \begin{step}
    The term
    \begin{equation*}
      \tilde e_\lambda \coloneq \pi_7\(F_{I_\lambda} - F_{A_0|_Q}\)
    \end{equation*}
    satisfies $\|\tilde e_\lambda\|_{-2,V_{[0,\sigma)}}\leq
    c\lambda^2$.
  \end{step}

  Because of \autoref{Thm_Haydys2}, the fact that $F^{0,2}_{I_\lambda}$ is anti-self-dual and \autoref{Prop_Pi7TaylorExpansion} we can write $\tilde e_\lambda$ on $V_{[0,\sigma)}$ as
  \begin{equation*}
    \pi_7\(F_{I_\lambda}^{2,0} - F_{A|_Q}\)
    + (\pi_7^1+\pi_7^{\geq 2})\(F_{I_\lambda}^{1,1}\)
    + \pi_7^{\geq 2}\(F_{I_\lambda}^{0,2}\).
  \end{equation*}
  Using \autoref{Prop_Pi7TaylorExpansion} and \autoref{Prop_CurvatureEstimate} as well as $\|1\|_{-1,V_{[0,\sigma)}}\leq c$ we estimate $\|\tilde e_\lambda\|_{-2,V_{[0,\sigma)}}$ by
  \begin{align*}
    &\Abs*{F_{I_{\lambda}}^{(2,0)}-F_{A_0|_Q}}_{-2,V_{[0,\sigma)}} 
    \cdot \Abs*{\pi_7}_{0,V_{[0,\sigma)}} \\
    &\qquad
    +\Abs*{F_{I_\lambda}^{1,1}}_{-3,V_{[0,\sigma)}}
    \cdot\(\Abs*{\pi_7^1}_{1,V_{[0,\sigma)}}+\|1\|_{-1,V_{[0,\sigma)}}
    \cdot \Abs*{\pi_7^{\geq 2}}_{2,V_{[0,\sigma)}}\) \\
    &\qquad
    + \Abs*{F_{I_\lambda}^{0,2}}_{-4,V_{[0,\sigma)}}
    \cdot\|\pi_7^{\geq 2}\|_{2,V_{[0,\sigma)}}
    \leq c \lambda^2.
  \end{align*}

   \begin{step}
     We prove that $\Abs*{e_\lambda-\tilde e_\lambda}_{V_{[0,2\lambda)}} \leq c\lambda^2$.
  \end{step}

  Since
  \begin{equation*}
    \Abs*{\pi_7(F_{A_0|_Q})}_{-2,V_{[0,2\lambda)}}
      \leq \|1\|_{-2,V_{[0,2\lambda)}} \cdot \Abs*{\pi_7(F_{A_0|_Q})}_{0,V_{[0,2\lambda)}} 
      \leq c\lambda^2,
  \end{equation*}
  it suffices to estimate $F_{A_\lambda} - F_{I_\lambda}$ in $V_{[0,2\lambda)}$.
  Now, in $V_{[0,2\lambda)}$ the curvature of $A_{\lambda}$ is given by
  \begin{equation*}
    F_{A_\lambda}
    = F_{I_\lambda}
    + \chi^-_\lambda \rd_{I_\lambda} a
    + \frac12 (\chi^-_\lambda)^2 [a\wedge a]
    + \rd \chi^-_\lambda \wedge a.
  \end{equation*}
  Using \autoref{Prop_ConnectionEstimate} and the fact that the cut-off functions $\chi_\lambda^-$ where constructed so that $\|\chi^-_\lambda\|_{0,V_{[0,\sigma)}} + \|\rd\chi^-_\lambda\|_{-1,V_{[0,\sigma)}} \leq c$ we obtain
  \begin{align*}
    &\|F_{A_\lambda}-F_{I_\lambda}\|_{-2,V_{[0,2\lambda)}} \\
    &\qquad
    \leq 
       \|1\|_{-2,V_{[0,2\lambda)}} \cdot
       \|\chi^-_\lambda\|_{0,V_{[0,2\lambda)}} \cdot
       \|\rd_{A|_Q} a\|_{0,V_{[0,2\lambda)}} \\
    &\qquad\quad 
    +\|\chi^-_\lambda\|_{0,V_{[0,2\lambda)}} \cdot
     \|i_\lambda\|_{-3,V_{[\lambda,\sigma)}} \cdot
     \|a\|_{1,V_{[0,2\lambda)}} \\
    &\qquad\quad
    +\frac12
     \|1\|_{-4,V_{[0,2\lambda)}} \cdot 
     \|\chi^-_{\lambda}\|_{0,V_{[0,2\lambda)}}^2 \cdot
     \|a\|_{1,V_{[0,2\lambda)}}^2     \\
    &\qquad\quad
    +\|1\|_{-2,V_{[0,2\lambda)}} \cdot
    \|\rd\chi^-_\lambda\|_{-1,V_{[0,2\lambda)}} \cdot
     \|a\|_{1,V_{[0,2\lambda)}} 
    \leq c \lambda^2.
  \end{align*}
 
  \begin{step}
    We prove that $\Abs*{e_\lambda -\tilde e_\lambda}_{V_{(2\lambda,\sigma/2)}} \leq c\lambda^2$.
  \end{step}

  This is an immediate consequence of $\pi_7(F_{A_0}) = 0$ and \autoref{Prop_ConnectionEstimate} since in $V_{[2\lambda,\sigma/2)}$ the curvature of $A_\lambda$ is given by $F_{A_\lambda} = F_{A_0} + [i_\lambda \wedge a] + F_{I_\lambda} - F_{A_0|_Q}$.

  \begin{step}
    We prove that $\Abs*{e_\lambda -\tilde e_\lambda}_{V_{[\sigma/2,\sigma)}} \leq c\lambda^2$.
  \end{step}

  In $V_{[\sigma/2,\sigma)}$ the curvature of $A_\lambda$ is given by
  \begin{equation*}
    F_{A_\lambda}
    = F_{A_0}
    + \chi^+ \rd_{A_0} i_\lambda
    + \frac12(\chi^+)^2 [i_\lambda \wedge i_\lambda]
    + \rd \chi^+ \wedge i_\lambda.
  \end{equation*}
  Since $\|\chi^+\|_{\ell,V_{[\sigma/2,\sigma)}} + \|\rd\chi^+\|_{\ell,V_{[\sigma/2,\sigma)}} \leq c$, it follows that
  \begin{align*}
    &\|F_{A_\lambda}-F_{A_0}\|_{-2,V_{[\sigma/2,\sigma)}} \\
    &\qquad
    \leq 
      \|\chi^+\|_{2,V_{[\sigma/2,\sigma)}} \cdot
      \|\rd_{I_\lambda}i_\lambda\|_{-4,V_{[\sigma/2,\sigma)}}\\
    &\qquad\quad
    +\|\chi^+\|_{0,V_{[\sigma/2,\sigma)}} \cdot
     \|a\|_{1,V_{[\sigma/2,\sigma)}} \cdot
     \|i_\lambda\|_{-3,V_{[\sigma/2,\sigma)}}\\
    &\qquad\quad
    +\frac12\|\chi^+\|_{2,V_{[\sigma/2,\sigma)}}^2 \cdot
     \|i_\lambda\|_{-3,V_{[\sigma/2,\sigma)}}^2\\
    &\qquad\quad
    +\|\rd\chi^+\|_{1,V_{[\sigma/2,\sigma)}} \cdot
    \|i_\lambda\|_{-3,V_{[\sigma/2,\sigma)}} \leq c\lambda^2.
  \end{align*}
  This completes the estimate.
\end{proof}

%%% Local Variables: 
%%% mode: latex
%%% TeX-master: "icf"
%%% End: 

\section{Linear analysis}
\label{Sec_LinearAnalysis}

\begin{prop}
  \label{Prop_RightInverse}
  For $\lambda\in(0,\Lambda]$ the linear operator $L_\lambda\co \fX_\lambda \to \fY_\lambda$ has a right inverse $R_\lambda\co \fY_\lambda \to \fX_\lambda$ and there exists a constant $c>0$ which is independent of $\lambda\in(0,\Lambda]$ such that
  \begin{equation*}
    \|R_\lambda (\xi,\alpha)\|_{\fX_\lambda} \leq c \|(\xi,\alpha)\|_{\fY_\lambda}.
  \end{equation*}
\end{prop}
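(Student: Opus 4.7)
The plan is to build a uniformly bounded approximate right inverse $P_\lambda$ satisfying $L_\lambda P_\lambda = \mathrm{id} + E_\lambda$ with $\|E_\lambda\|_{\fY_\lambda \to \fY_\lambda} \leq \tfrac12$ for $\lambda \in (0,\Lambda]$ (after shrinking $\Lambda$ if necessary), and then take $R_\lambda := P_\lambda(\mathrm{id}+E_\lambda)^{-1}$, inverting by Neumann series. This makes the constant $c$ transparent: it arises from the operator norms of the inverses of the three ``model'' operators that appear in the $\lambda \downarrow 0$ limit. An alternative is a pure blow-up contradiction, but its payoff (uniformity) is the same and its bookkeeping is similar.

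Given $(\xi,\alpha) \in \fY_\lambda$, I would assemble $P_\lambda(\xi,\alpha)$ from three pieces, one per natural scale. The Fueter piece uses unobstructedness of $\fI$: the right inverse $G$ of $F_\fI$ applied to $\sigma_\lambda\alpha$ yields $\hat\fI \in \Gamma(\fI^*V\fM)$, and $P_\lambda^{\mathrm{F}}(\xi,\alpha) := \mu_\lambda \hat\fI$ matches $\bar\sigma_\lambda\alpha$ thanks to the identification $\fF\fI = \pi_7^0(F_{I(\fI)}^{1,1})$ from \autoref{Thm_Haydys2}. The macroscopic piece uses irreducibility and unobstructedness of $A_0$: $L_{A_0}$ is invertible, and its inverse applied to the part of $(\xi,\tau_\lambda\alpha)$ cut off to $r\geq \sqrt\lambda$ gives the macroscopic term. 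The microscopic piece uses the unconditional surjectivity of $\delta_{I(\fI(x))}$ on $N_xQ$ from \autoref{Prop_ASDDecay}: after rescaling by $s_{1/\lambda}$ near $Q$, applying $\delta^{-1}$ fibrewise to the inner part of $(\xi, \tau_\lambda\alpha)$, and rescaling back, one obtains the $\rho_\lambda$-contribution concentrated near $Q$. The three contributions are patched together by cutoffs at $r \sim \sqrt\lambda$.

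The resulting error $E_\lambda$ splits into three types of terms: commutators $[L_\lambda, \chi]$ with the patching cutoffs; differences between $L_\lambda$ and its frozen model operator at each scale (estimated via \autoref{Prop_CurvatureEstimate} and \autoref{Prop_ConnectionEstimate}); and the failure of the decompositions $\bar\pi_\lambda + \rho_\lambda = \mathrm{id}$ and $\bar\sigma_\lambda + \tau_\lambda = \mathrm{id}$ to be exactly preserved by $L_\lambda$. Each contribution will come with a positive power $\lambda^\varepsilon$, provided the break point $r = \sqrt\lambda$ in $w_{\ell,\delta;\lambda}$ and the relationship between $\ell$, $\delta$ and $\alpha$ are chosen as in \autoref{Def_GlobalWeightedNorms}. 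Then $\mathrm{id} + E_\lambda$ is invertible by Neumann series for $\lambda \leq \Lambda$, and $R_\lambda := P_\lambda(\mathrm{id}+E_\lambda)^{-1}$ has the stated uniform bound.

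The main obstacle will be the bookkeeping on the transition annulus $r \sim \sqrt\lambda$, where the three pieces must be glued smoothly and the weight function changes regime. Here the $L^\infty$-based weighted Hölder norm has to be played off against the $L^2$-based inner product built into $\pi_\lambda$ and $\sigma_\lambda$: the factor $\lambda^{-\alpha}$ appearing in \autoref{Prop_ProjectionBounds} must be absorbed by the gain $\lambda^{-\delta/2}$ with $\delta < 0$ coming from \autoref{Cor_DeltaNorm}, which is exactly what the design of $\fX_\lambda$ and $\fY_\lambda$ allows provided $0 < \alpha \ll |\delta|$. Once this delicate matching is in hand, every error term is of the form $\lambda^\varepsilon$ times a scale-invariant operator norm, and the proposition follows.
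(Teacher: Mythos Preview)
Your overall architecture---three local parametrices (Fueter, macroscopic, microscopic) glued together and then corrected by a Neumann series---matches the paper's proof. The Fueter piece and the use of a right inverse for $L_{A_0}$ on $X\setminus Q$ are essentially as in the paper. But there are two genuine problems.

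\textbf{The microscopic model is wrong.} Applying $\delta_{I(\fI(x))}^{-1}$ fibrewise on $N_xQ$ does not give an approximate inverse for $L_\lambda$ near $Q$. After rescaling, the model operator is the full $8$--dimensional operator $\bL_I$ on $\R^4\times\R^4$, which involves derivatives in the $Q$--directions as well (cf.~\autoref{Sec_R8Model} and \autoref{Rmk_Spin7InstantonG2xLine}). A fibrewise inverse of $\delta_I$ leaves the tangential part of $\bL_I a$ completely uncontrolled, so the resulting error is $O(1)$, not $O(\lambda^\varepsilon)$. The paper instead proves that $\bL_I\co\fA^{1,\alpha}_\beta\to\fA^{0,\alpha}_{\beta-1}$ is invertible for $\beta\in(-2,-1)$ (\autoref{Prop_R8ModelInvertible}) and uses this $8$--dimensional inverse, after localising to a ball $B_\epsilon(x_\gamma)\subset Q$ via a partition of unity with an auxiliary parameter $\epsilon$.

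\textbf{The error is not a power of $\lambda$.} The commutator of $L_\lambda$ with a cutoff at $r\sim\sqrt\lambda$ is scale-invariant in the weighted norms: a cutoff with derivative supported in a shell of width comparable to $\sqrt\lambda$ gives $\|\rd\chi\|_{C^{0,\alpha}_{-1,0;\lambda}}\sim 1$, not small. The paper handles this by using logarithmic cutoffs $\beta^{\pm}_{\lambda,N}$ stretched over $r\in[\sqrt\lambda/N,\sqrt\lambda]$ and $[\sqrt\lambda,N\sqrt\lambda]$, which satisfy $\|\rd\beta^\pm_{\lambda,N}\|_{C^{0,\alpha}_{-1,0;\lambda}}\leq c/\log N$. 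The total error is then of the form $c\epsilon^{-\alpha}\bigl(\epsilon+1/\log N+N\sqrt\lambda/\epsilon\bigr)$, made small by first choosing $\epsilon$ small, then $N$ large, and only then $\lambda$ small. So your claim that ``each contribution will come with a positive power $\lambda^\varepsilon$'' is false as stated, and the bookkeeping you allude to in the last paragraph will not close without these two extra parameters.
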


This is the key to proving \autoref{Thm_A}.
We produce $R_\lambda$ by gluing various local right inverses ``by hand''.
We decompose $L_\lambda$ as
\begin{align*}
  L_\lambda
  =
  \begin{pmatrix}
    \fK_\lambda & \fp_\lambda \\
    \fq_\lambda & \fL_\lambda
  \end{pmatrix}
\end{align*}
where
\begin{gather*}
  \fK_\lambda \coloneq \bar\sigma_\lambda L_\lambda \bar\pi_\lambda,  \quad
  \fL_\lambda \coloneq \tau_\lambda L_\lambda \rho_\lambda,  \\
  \fp_\lambda \coloneq \bar\sigma_\lambda L_\lambda \rho_\lambda, \qandq
  \fq_\lambda \coloneq \tau_\lambda L_\lambda \bar\pi_\lambda.
\end{gather*}
In the course of this section we will show that $\fK_\lambda$ is essentially the linearised Fueter operator $F_\fI$, which has a right inverse by assumption, and that local right inverses for $\fL_\lambda$ can be seen to exist by considerations of model operators on $\R^8$ and on the complement of $Q$, while $\fp_\lambda$ and $\fq_\lambda$ are negligibly small terms.
An approximate right inverse $\tilde R_\lambda$ can then be constructed by carefully patching together the local right inverses.
Finally, a simple deformation argument will yield $R_\lambda$.

\subsection{The model operator on \texorpdfstring{$\R^8$}{R8}}
\label{Sec_R8Model}

Let $I$ be a finite energy ASD instanton on a $G$--bundle $E$ over $\R^4$.
By a slight abuse of notation we denote the pullbacks of $I$ and $E$ to $\R^8=\R^4\times \R^4$ by $I$ and $E$ as well.
We define $\bL_I\co \Omega^0(\R^8,\fg_E) \to \Omega^0(\R^8,\fg_E)\oplus\Omega^2_7(\R^8,\fg_E)$ by
\begin{equation*}
  \bL_I(a)\coloneq(\rd_A^*a,\pi_7\rd_A a).
\end{equation*}
Here $\pi_7$ is taken with respect to the standard $\Spin(7)$--structure $\Phi_0$ on $\R^8$, see \eqref{Eq_Phi0}.

By \autoref{Rmk_Spin7InstantonG2xLine} we can, with the appropriate identifications being
made, write
\begin{equation*}
  \bL_I=\del_t-L_I
\end{equation*}
where we think of $I$ as a $\rG_2$--instanton on
$\{0\}\times \R^3\times \R^4$ and $L_I$ is as in 
\begin{equation*}
    L_{A,\phi} \coloneq
    \begin{pmatrix}
      0 & \rd_A^* \\
      \rd_A & *\left(\psi\wedge\rd_A\right)
  \end{pmatrix}.
\end{equation*}
In particular, using \cite[Proposition 7.1]{Walpuski2011} we see that
\begin{align}
  \bL_I\bL_I^*=\bL_I^*\bL_I = \Delta_{\R^4}  +
  \begin{pmatrix}
    \delta_I\delta_I^* & \\
    & \delta_I^*\delta_I
  \end{pmatrix}
\end{align}
and, hence, we can argue as in \cite[Section 7]{Walpuski2011}.

\begin{remark}
  In the above situation thinking of $\R^8$ as $\R^4\times \R^4$ as in \autoref{Ex_HyperkahlerPair} and at the same time as $\R\times (\R^3\times \R^4)$ as in \autoref{Ex_G2xLine}, the summands $\Lambda^2_3$ and $\Lambda^2_4$ in \autoref{Prop_Lambda2SplittingHyperkahlerPair} are identified, via \autoref{Prop_Lambda2SplittingG2xLine}, with $\R^3$ and $\R^4$ respectively.
\end{remark}

\begin{definition}
  \label{Def_R8WeightedNorms}
  Define weight functions $w \co \R^8 \to [0,\infty)$ and, by slight abuse of notation, $w \co (\R^8)^2 \to [0,\infty)$ by
  \begin{equation*}
    w(x) \coloneq 1 + |\pi_2(x)| \quad\text{and}\quad
    w(x,y) \coloneq \min\{w(x),w(y)\}.
  \end{equation*}
  Here $\pi_2 \co \R^8 = \R^4\times \R^4 \to \R^4$ is the projection to the second factor.
  For a Hölder exponent $\alpha\in(0,1)$ and a weight parameter $\beta\in\R$ we define
  \begin{align*}
    [f]_{C^{0,\alpha}_{\beta}(U)}
    &\coloneq \sup_{d(x,y) \leq w(x,y)}
    w(x,y)^{\alpha-\beta} \frac{|f(x)-f(y)|}{d(x,y)^\alpha}, \\[1ex]
    \|f\|_{L^{\infty}_{\beta}(U)}
    &\coloneq\big\|w^{-\beta}f\big\|_{L^\infty(U)} \qand \\
    \|f\|_{C^{k,\alpha}_{\beta}(U)}
    &\coloneq \sum_{j=0}^k \big\|\nabla^j f\big\|_{L^{\infty}_{\beta-j}(U)}
    + \big[\nabla^j f\big]_{C^{0,\alpha}_{\beta-j}(U)}.
  \end{align*}
  Here $f$ is a section of a vector bundle over $U\subset\R^8$ equipped with an inner product and a compatible connection.
  We use parallel transport to compare the values of $f$ at different points.
  If $U$ is not specified, then we take $U=\R^8$.
  We denote by $C^{k,\alpha}_\beta$ the subspace of elements $f$ of the Banach space $C^{k,\alpha}$ with $\smash{\|f\|_{C^{k,\alpha}_\beta}}<\infty$ and equip it with the norm $\smash{\|\cdot\|_{C^{k,\alpha}_\beta}}$.
\end{definition}

The linear operator $\bL_I$ can serve as a model for $L_\lambda$ in the following sense:
Fix $x\in Q$.
Set $I\coloneq I(\fI)|_{N_xQ}$ and $E\coloneq E(\fI)|_{N_xQ}$.
Identify $T_xX=T_xQ\times N_xQ$ with $\R^8=\R^4\times\R^4$ in such a way that the summands are preserved and $\Phi|_{T_xX}$ is identified with $\Phi_0$.
For $\epsilon_1,\epsilon_2>0$ we define
\begin{equation*}
  V_{\epsilon_1,\epsilon_2}\coloneq B_{\epsilon_1}(x)\cap V_{[0,\epsilon_2)}.
\end{equation*}
Using the exponential map we can identify $V_{\epsilon_1,\epsilon_2}$ with a small neighbourhood $\tilde U_{\epsilon_1,\epsilon_2}$ of the origin in $\R^8$.
With respect to this identification a $\fg_{E_\lambda}$--valued tensor field $f$ on $V_{\epsilon_1,\epsilon_2}$ is identified with a $s_{1/\lambda}^*\fg_E$--valued tensor field $\tilde f$ on $\tilde U_{\epsilon_1,\epsilon_2;\lambda}$, and if $k\in\N$ is a scaling parameter, then with $f$ we can associate a $\fg_E$--valued tensor field $s_{d,\lambda}f$ on
\begin{equation*}
  U_{\epsilon_1,\epsilon_2;\lambda}
  \coloneq
  \lambda^{-1}\tilde U_{\epsilon_1,\epsilon_2}
  =
  \lambda^{-1}\exp_x^{-1}(V_{\epsilon_1,\epsilon_2})
\end{equation*}
defined by
\begin{equation*}
  (s_{d,\lambda} f)(x,y)
  \coloneq
  \lambda^d \tilde f(\lambda x, \lambda y)
  =
  \lambda^d f\circ\exp(\lambda (x, y)).
\end{equation*}

\begin{prop}
  \label{Prop_LModelComparison}
  There are constants $c,\epsilon_0>0$ such that for
  $\epsilon\in(0,\epsilon_0]$ and $\lambda\in(0,\Lambda]$ we have
  \begin{multline*}
    \frac{1}{c}\lambda^{d+\ell} \|f\|_{C^{k,\alpha}_{\ell,\delta;\lambda}\(V_{\epsilon,N\sqrt\lambda}\)}
    \leq
    \|s_{d,\lambda}f\|_{C^{k,\alpha}_{\ell+\delta}\(U_{\epsilon,N\sqrt\lambda;\lambda}\)} \\
    \leq cN^{-2\delta} 
    \lambda^{d+\ell}\|f\|_{C^{k,\alpha}_{\ell,\delta;\lambda}\(V_{\epsilon,N\sqrt\lambda}\)}    
  \end{multline*}
  and
  \begin{equation*}
    \Abs*{L_\lambda a - s_{2,\lambda}^{-1} \bL_I s_{1,\lambda} a}_{C^{0,\alpha}_{-2,\delta;\lambda}\(V_{\epsilon,N\sqrt\lambda}\)}
    \leq c(\epsilon+\sqrt\lambda)
    \Abs*{a}_{C^{1,\alpha}_{-1,\delta;\lambda}\(V_{\epsilon,N\sqrt\lambda}\)}.
  \end{equation*}
  Here, in the first estimate, we also allow $k = \alpha = 0$, thus making a statement about weighted $L^\infty$--norms.
\end{prop}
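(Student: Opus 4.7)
The proof is essentially a detailed bookkeeping exercise comparing weight functions and operator coefficients after the rescaling $s_{1/\lambda}$. I organize it around three observations: (i) in the ``inner region'' $r \leq \sqrt\lambda$ the two families of weighted norms are related by a clean power of $\lambda$; (ii) the discrepancy in the ``outer region'' $\sqrt\lambda < r \leq N\sqrt\lambda$ is exactly the source of the $N^{-2\delta}$ factor; and (iii) the geometric data ($g_\Phi$, $\Phi$, $A_\lambda$) agrees with its flat/pulled-back counterpart up to errors of size $O(\epsilon + \sqrt\lambda)$ on $V_{\epsilon,N\sqrt\lambda}$.

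\emph{First estimate.} Pull everything back via $\exp_x$ to $T_xX \cong \R^8$ and rescale via $s_{1/\lambda}$; in the rescaled coordinate $y$, the distance to $Q$ is $r(\lambda y) = \lambda\,|\pi_2(y)|$ up to a multiplicative error $1 + O(\epsilon)$ from the exponential map. In the inner region $r \leq \sqrt\lambda$ one checks directly that
\begin{equation*}
  w_{\ell,\delta;\lambda}(\lambda y) \;=\; \lambda^{\delta}\bigl(\lambda+r(\lambda y)\bigr)^{-\ell-\delta}
  \;=\; \lambda^{-\ell}(1+|\pi_2(y)|)^{-\ell-\delta} \;=\; \lambda^{-\ell}\,w(y)^{-(\ell+\delta)},
\end{equation*}
so the $L^\infty_{\ell,\delta;\lambda}$-norm on $X$ and the $L^\infty_{\ell+\delta}$-norm on $\R^8$ coincide up to the factor $\lambda^{d+\ell}$ picked up from $s_{d,\lambda}f(y)=\lambda^d \tilde f(\lambda y)$. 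In the outer region $\sqrt\lambda < r \leq N\sqrt\lambda$ the two weights instead differ by $(r/\sqrt\lambda)^{-2\delta}$, which is bounded by $N^{-2\delta}$; since $\delta<0$ in our application, this is the (large) factor appearing in the upper bound. The Hölder part of the norm is handled identically once one notices that the scale condition $d(x,y)\leq w(x,y)$ on $\R^8$ translates back to $d(x,y)\leq \lambda+\min\{r(x),r(y)\}$, which is precisely the scale condition built into $[\cdot]_{C^{0,\alpha}_{\ell-\alpha,\delta;\lambda}}$; the distortion of parallel transport across such a ball is $1+O(\epsilon)$.

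\emph{Second estimate.} Writing $A_\lambda = A_0|_Q + \chi_\lambda^- a + i_\lambda$ on $V_{\epsilon,N\sqrt\lambda}$ (since $\chi^+\equiv 1$ there), the difference $L_\lambda - s_{2,\lambda}^{-1}\bL_I s_{1,\lambda}$ splits into three contributions:
\begin{equation*}
  \mathcal{E}_{\mathrm{metric}} + \mathcal{E}_{\mathrm{str}} + \mathcal{E}_{\mathrm{conn}},
\end{equation*}
coming respectively from $g_\Phi - g_{\mathrm{flat}}$ (inside $\rd^*$ and the Hodge star), from $\Phi-\Phi_0$ (inside $\pi_7$), and from the connection discrepancy $[(\chi_\lambda^- a)\wedge\,\cdot\,]$. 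Taylor expanding in Fermi coordinates around $x\in Q$ gives $g_\Phi = g_{\mathrm{flat}} + O(r^2)$ and $\Phi = \Phi_0 + O(r)$, with derivatives obeying the expected power of $r$; this gives coefficient bounds $|\mathcal{E}_{\mathrm{metric}}|,|\mathcal{E}_{\mathrm{str}}| = O(r)$. By \autoref{Prop_ConnectionEstimate}, $a$ is in radial gauge from $Q$, hence $|a|\leq c\, r$ and $|\nabla a|\leq c$, giving $|\mathcal{E}_{\mathrm{conn}}| = O(r)$ as well. Combining with the weight comparisons from the first estimate, each error term is bounded as a map $C^{1,\alpha}_{-1,\delta;\lambda} \to C^{0,\alpha}_{-2,\delta;\lambda}$ by a constant times $\sup_{V_{\epsilon,N\sqrt\lambda}} r \leq c(\epsilon + \sqrt\lambda)$, which is the claim.

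\emph{Main obstacle.} The only genuinely delicate point is the transition region $r \sim \sqrt\lambda$, where the piecewise definition of $w_{\ell,\delta;\lambda}$ changes form; in the Hölder seminorm one must compare values at pairs of points that can straddle this transition, and verify that the two expressions for the weight are continuous (up to a bounded multiplicative constant) across $r=\sqrt\lambda$. Once this is checked, both estimates follow by combining the point-wise weight comparison of Paragraph~2 with Taylor expansion of the geometric data; no further analytic input is required.
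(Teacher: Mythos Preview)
The paper states this proposition without proof, treating both estimates as routine scaling computations; your sketch supplies exactly the argument the author has in mind, and is essentially correct.

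Two small points deserve tightening. First, your decomposition of the operator error into $\mathcal{E}_{\mathrm{metric}} + \mathcal{E}_{\mathrm{str}} + \mathcal{E}_{\mathrm{conn}}$ misses one contribution: the model operator $s_{2,\lambda}^{-1}\bL_I s_{1,\lambda}$ is built from the instanton $I = I(\fI)|_{N_xQ}$ on the \emph{fixed} fibre over $x$, extended constantly in the $T_xQ$--direction, whereas $A_\lambda$ involves $I_\lambda = s_{1/\lambda}^*I(\fI)$, which varies along $Q$. The discrepancy $I_\lambda - s_{1/\lambda}^*\bigl(I(\fI)|_{N_xQ}\bigr)$ is of size $O(\epsilon)$ in the relevant norm (Lipschitz dependence of $\fI$ on the base point) and accounts for the $\epsilon$ in the final bound; it should be added to $\mathcal{E}_{\mathrm{conn}}$. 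Second, and relatedly, your closing sentence attributes the whole error to $\sup_{V_{\epsilon,N\sqrt\lambda}} r$, but on this region $r \leq N\sqrt\lambda$ only; the $\epsilon$ genuinely comes from the tangential extent of the chart (variation of $g$, $\Phi$, and $I(\fI)$ as one moves along $Q$ inside $B_\epsilon(x)$), not from the normal distance. Since the identification is via $\exp_x$ (geodesic normal coordinates at the point $x$, not Fermi coordinates along $Q$), the metric and $\Phi$ errors are controlled by distance to $x$, which is bounded by $c(\epsilon + N\sqrt\lambda)$. With these clarifications the argument is complete.
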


For $\beta<-1$ we define $\pi_I\co C^{k,\alpha}_{\beta} \to C^{k,\alpha}(\R^4,\ker\delta_I)$ by
\begin{equation*}
  \pi_I(a)(x)\coloneq\sum_\kappa\<a(x,\cdot),\kappa\>_{L^2(\R^4)}\kappa
\end{equation*}
where $\kappa$ runs through an $L^2$ orthonormal basis of $\ker \delta_I$ and set
\begin{equation*}
  \fA^{k,\alpha}_{\beta}\coloneq\ker \pi_I \cap C^{k,\alpha}_{\beta}.
\end{equation*}
The projection operators $\pi_\lambda$ and $\sigma_\lambda$ can be viewed as ``global versions'' of $\pi_I$.  It follows from the
discussion following \autoref{Prop_ASDDecay} that $\bL_I$ defines a linear $\bL_I\co\fA^{1,\alpha}_{\beta}\to\fA^{0,\alpha}_{\beta-1}$.

The key result of this section is the following.
\begin{prop}\label{Prop_R8ModelInvertible}
  For $\beta\in(-2,-1)$ the linear operator $\bL_I \co \fA^{1,\alpha}_{\beta} \to \fA^{0,\alpha}_{\beta-1}$ is invertible.
\end{prop}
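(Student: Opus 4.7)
The plan is to invert $\bL_I$ using the Weitzenböck identity $\bL_I^*\bL_I = \bL_I\bL_I^* = \Delta_{\R^4_x} + D$, where $D := \diag(\delta_I\delta_I^*, \delta_I^*\delta_I)$, together with Fourier analysis in the translation-invariant base direction $x\in\R^4$ and the scattering structure of $\delta_I^*\delta_I$ on $\R^4_y$. The strategy is to produce an inverse of the form $\bL_I^*(\bL_I\bL_I^*)^{-1}$, for which one needs invertibility of the second-order operator $\Delta_{\R^4_x} + D$ restricted to $\fA$ in the appropriate weighted spaces, together with the fact that $\bL_I^*$ drops the weight index by one.

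For injectivity, suppose $\phi\in\fA^{1,\alpha}_\beta$ satisfies $\bL_I\phi = 0$, so that $(\Delta_x+\delta_I^*\delta_I)\phi = 0$. The weight $w$ depends only on $|y|$, so $\phi$ is uniformly bounded in $x$ and admits a tempered Fourier transform in that direction. On the Fourier side the equation becomes $(|\xi|^2+\delta_I^*\delta_I)\hat\phi(\xi,\cdot) = 0$, and since $\delta_I^*\delta_I\geq 0$ the fibre operator is strictly positive for $\xi\neq 0$, so $\hat\phi$ is supported at the origin. Boundedness of $\phi$ in $x$ then forces $\phi$ to be independent of $x$, reducing to $\delta_I^*\delta_I\phi = 0$ on $\R^4_y$; by \autoref{Prop_ASDDecay} this implies $\phi\in\ker\delta_I$, which is excluded by the defining condition $\pi_I\phi = 0$ of $\fA$.

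For surjectivity, given $f\in\fA^{0,\alpha}_{\beta-1}$ I solve $(\Delta_x+D)\eta = f$ by Fourier transform in $x$. For $\xi\neq 0$ the fibre operator $|\xi|^2+D$ on $\R^4_y$ is bounded below by $|\xi|^2$, and on the $\fA$-subspace it remains invertible at $\xi=0$ as well, since the $L^2$ kernel of $D$ on this subspace is trivial (using $\ker\delta_I^* = 0$ from \autoref{Prop_ASDDecay}, while $\ker\delta_I$ has been quotiented out). The family of fibre resolvents assembles, via the inverse Fourier transform in $x$, into a convolution operator whose kernel is a Bessel-type potential; combined with the fact that $\delta_I^*\delta_I$ is a zeroth-order perturbation of $\Delta_{\R^4_y}$ decaying like $|y|^{-4}$, standard scattering-theoretic resolvent estimates produce $\eta$ in weighted spaces with two derivatives gained, whence $\phi := \bL_I^*\eta$ solves $\bL_I\phi = f$ with one derivative gained as required.

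The hard part will be the uniform weighted Hölder estimate on $(\bL_I\bL_I^*)^{-1}$ for $\beta\in(-2,-1)$. I would establish this by a scaling/blow-up argument: assuming uniform control fails, one normalises and extracts a limit sitting on one of the model geometries (a fixed-radius chart on $\R^4_y$, or all of $\R^8$ with limiting operator $\Delta_x+\Delta_y=\Delta_{\R^8}$). The chosen weight range $\beta\in(-2,-1)$ lies strictly between the two indicial roots $0$ and $-2$ of $\Delta_{\R^4_y}$, which rules out any non-trivial polynomially-bounded limit in each case; together with the injectivity argument above, this yields the desired uniform estimate and hence the invertibility of $\bL_I\co\fA^{1,\alpha}_\beta\to\fA^{0,\alpha}_{\beta-1}$.
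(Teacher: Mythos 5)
Your plan has the right architecture --- reduce everything to rigidity statements for the translation-invariant second-order operator $\Delta_{\R^4}+D$ and a blow-up argument keyed to the weight window --- but two of the steps, as written, do not go through. First, the spectral positivity arguments are carried out as if $\phi(x,\cdot)$ and $\hat\phi(\xi,\cdot)$ lived in $L^2(\R^4_y)$: for $\beta\in(-2,-1)$ a section bounded by $(1+|y|)^\beta$ is \emph{not} square-integrable on $\R^4_y$ (one needs $\beta<-2$), so ``$|\xi|^2+\delta_I^*\delta_I$ is strictly positive for $\xi\neq 0$, hence $\hat\phi$ is supported at the origin'' is not justified in the space where $\phi$ actually lives; the correct substitute is precisely the Liouville-type statement (\autoref{Lem_Liouville}) for bounded solutions of $(\Delta_{\R^n}+D)a=0$, which requires a separate argument and is what the paper invokes. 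Second, the claim that the fibre operator ``remains invertible at $\xi=0$ on the $\fA$-subspace since the $L^2$ kernel of $D$ is trivial'' conflates trivial kernel with invertibility: $D=\mathrm{diag}(\delta_I\delta_I^*,\delta_I^*\delta_I)$ is a decaying perturbation of $\Delta_{\R^4_y}$ with essential spectrum $[0,\infty)$, so it is never invertible on $L^2$, and the uniform behaviour of $(|\xi|^2+D)^{-1}$ as $\xi\to 0$ on weighted spaces is exactly the content of the proposition, not an input to it. You do flag this and defer it to a blow-up argument, but at that point the Fourier machinery is doing no work: the entire burden falls on the final paragraph, which is only a sketch.

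For comparison, the paper works directly with the first-order operator and never solves the second-order problem. It proves the coercive estimates $\|a\|_{C^{1,\alpha}_\beta}\leq c\|\bL_I a\|_{C^{0,\alpha}_{\beta-1}}$ and $\|a\|_{C^{1,\alpha}_\beta}\leq c\|\bL_I^*a\|_{C^{0,\alpha}_{\beta-1}}$ on $\fA^{1,\alpha}_\beta$ for all $\beta\in(-3,-1)$ by a Schauder estimate plus a contradiction/compactness argument with two regimes (concentration at bounded $|y|$, handled by Arzel\`a--Ascoli, \autoref{Lem_Liouville} and $\pi_I a=0$; and escape $|y_i|\to\infty$, handled by rescaling to a harmonic limit on $\R^4\setminus\{0\}$ that is removable since $\beta>-3$ and decays since $\beta<0$) --- this is essentially the blow-up you gesture at in your last paragraph. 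Injectivity and closed range are then immediate, and surjectivity follows by \emph{duality} rather than by constructing $\phi=\bL_I^*\eta$: a cokernel element $b$ annihilates $\fA^{0,\alpha}_{\beta-1}$, extends to the whole weighted space via the splitting $C^{0,\alpha}_{\beta-1}=\fA^{0,\alpha}_{\beta-1}\oplus\im\pi_I$ (this is the only place $\beta>-2$ is used, ensuring $\im\pi_I\subset C^{0,\alpha}_{\beta-1}$), is translation-invariant by \autoref{Lem_Liouville}, lies in $C^{1,\alpha}_{-3-\beta}$ with $-3-\beta\in(-3,-1)$, and so vanishes by the $\bL_I^*$ estimate. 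If you want to keep your construction $\bL_I^*(\bL_I\bL_I^*)^{-1}$, you must actually prove the weighted invertibility of $\Delta_{\R^4}+D$ between $C^{2,\alpha}_{\beta+1}$ and $C^{0,\alpha}_{\beta-1}$, which is at least as hard as the paper's first-order estimates; the duality route avoids this entirely.
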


The proof rests on the following estimate.

\begin{prop}\label{Prop_R8ModelEstimates}
  For $\beta\in(-3,-1)$ there is a constant $c>0$ such that for all $a \in \fA^{1,\alpha}_\beta$ the following holds
  \begin{equation*}
    \|a\|_{C^{1,\alpha}_\beta} \leq c \|\bL_I a\|_{C^{0,\alpha}_{\beta-1}} \qandq
    \|a\|_{C^{1,\alpha}_\beta} \leq c \|\bL_I^* a\|_{C^{0,\alpha}_{\beta-1}}.
  \end{equation*}
\end{prop}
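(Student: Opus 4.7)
The strategy is a weighted blow-up argument built around the identity
\begin{equation*}
  \bL_I^*\bL_I = \bL_I\bL_I^* = \Delta_{\R^4_x} + D,
\end{equation*}
where $D$ denotes the fibre operator built from $\delta_I \delta_I^*$ and $\delta_I^* \delta_I$. The symmetric roles of $\bL_I$ and $\bL_I^*$ in this identity mean that both inequalities can be proved by the same argument; I describe the proof of the bound on $\bL_I$. First I would derive interior Schauder estimates on a unit ball in $\R^8$, rescale them on balls of radius $\tfrac12 w(y_0)$ using that $w=1+|y|$ is nearly constant on such balls, and cover $\R^8$ to obtain the weak estimate
\begin{equation*}
  \|a\|_{C^{1,\alpha}_\beta} \leq c\bigl(\|\bL_I a\|_{C^{0,\alpha}_{\beta-1}} + \|a\|_{L^\infty_\beta}\bigr).
\end{equation*}

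To absorb the lower-order term, I would argue by contradiction. Assume there is a sequence $a_i\in\fA^{1,\alpha}_\beta$ with $\|a_i\|_{C^{1,\alpha}_\beta} = 1$ and $\|\bL_I a_i\|_{C^{0,\alpha}_{\beta-1}} \to 0$, and pick points $(x_i,y_i)$ at which $w(y_i)^{-\beta}|a_i(x_i,y_i)|$ is bounded below. Two cases occur. If $|y_i|$ remains bounded, translate in $x$ and pass to a subsequential limit via Arzel\`a--Ascoli (using the weak estimate); the result is a nonzero $a_\infty\in C^{1,\alpha}_\beta(\R^8)$ satisfying $\bL_I a_\infty = 0$ and $\pi_I a_\infty = 0$. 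If $|y_i|\to\infty$, rescale by $|y_i|$; since ASD instantons on $\R^4$ have $|F_I|=O(|y|^{-4})$, the rescaled $I$ trivializes in the limit, producing a nonzero decaying solution of the flat model operator $\bL_0$ on $\R^8$.

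Both limits must be ruled out by Liouville arguments. In Case 1, $a_\infty$ also satisfies $(\Delta_x + D)a_\infty = 0$. Exploiting translation-invariance in the base, I would Fourier-transform in $x$ (treating $a_\infty$ as a tempered distribution) to obtain $(|\xi|^2 + D)\hat a_\infty(\xi,\cdot) = 0$. Non-negativity of $D$ makes $|\xi|^2 + D$ invertible for $\xi\neq 0$, so $\hat a_\infty$ is supported at the origin and $a_\infty$ depends only on $y$; the residual equation $Da_\infty = 0$ together with fibre-decay and \autoref{Prop_ASDDecay} (which rules out decaying elements of $\ker \delta_I^*$) then forces $a_\infty$ into $\ker \delta_I \subset \Omega^2_7$, and the constraint $\pi_I a_\infty = 0$ yields $a_\infty = 0$. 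The Case 2 Liouville statement for $\bL_0$ on flat $\R^8$ is classical and follows from the same Fourier reduction combined with standard integration by parts on $\R^4_y$ in the weight range $\beta\in(-3,-1)$.

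The principal obstacle is making the Fourier argument in Case 1 rigorous: $a_\infty$ has only polynomial decay in the fibre and is merely bounded in the base, so it is not a priori a tempered distribution in any natural sense. A cleaner substitute is a direct energy estimate: test $(\Delta_x + D)a_\infty = 0$ against $a_\infty\,\chi_R(x)^2$ for a family of cutoffs and integrate by parts; the resulting Caccioppoli-type bound, combined with non-negativity of $D$ and the constraint $\pi_I a_\infty = 0$ (which eliminates the component of $a_\infty$ in $\ker D$), forces $a_\infty$ to be constant in $x$ and to lie fibrewise in $\ker D$, hence zero. Once both Liouville statements are in place the blow-up contradiction closes, and the corresponding estimate for $\bL_I^*$ is obtained by the identical argument.
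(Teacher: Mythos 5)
Your overall architecture coincides with the paper's: a weighted Schauder estimate reduces the claim to the bound $\|a\|_{L^\infty_\beta}\leq c\|\bL_I a\|_{C^{0,\alpha}_{\beta-1}}$, which is proved by contradiction with the same dichotomy ($|y_i|$ bounded versus $|y_i|\to\infty$), the same rescaled flat limit in the second case, and the same use of $\bL_I^*\bL_I=\Delta_{\R^4}+D$ with $D\geq 0$ to kill both limits. The one place you diverge is that the paper does not prove the Liouville statement at all: it invokes it as a black box (\autoref{Lem_Liouville}, i.e.\ Lemma~A.1 of \cite{Walpuski2011}, for bounded solutions of $(\Delta_{\R^n}+D)a=0$ over a base of subexponential volume growth), and it is exactly your in-line substitute for that lemma that has a gap.

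Concretely, the Caccioppoli argument you propose in Case~1 does not close. Testing $(\Delta_x+D)a_\infty=0$ against $\chi_R(x)^2a_\infty$ gives, using $D\geq0$,
\begin{equation*}
  \int \chi_R^2\,|\nabla_x a_\infty|^2 \;\leq\; c\int |\nabla\chi_R|^2\,|a_\infty|^2 ,
\end{equation*}
and there are two problems. First, for $\beta\in(-2,-1)$ the fibrewise energy $\int_{\R^4_y}|a_\infty(x,y)|^2\,dy$ is already infinite ($|a_\infty|\sim(1+|y|)^{\beta}$ with $2\beta\geq -4$), so the right-hand side is not even finite without additional cutoffs in $y$, whose boundary terms do not obviously vanish. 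Second, even granting fibrewise integrability, $a_\infty$ is merely bounded in $x$, so the right-hand side is of order $R^{-2}\cdot R^4=R^{2}$ in base dimension $4$; it does not tend to zero, and no iteration rescues this because $D=\diag(\delta_I\delta_I^*,\delta_I^*\delta_I)$ has no spectral gap above $0$ on the non-compact fibre $\R^4$ (its essential spectrum starts at $0$), so the constraint $\pi_I a_\infty=0$ does not give you a coercive zeroth-order term. This is precisely the reason naive Caccioppoli fails to prove Liouville's theorem for bounded harmonic functions on $\R^n$, $n\geq 2$; one needs a genuine Liouville-type input (mean-value/gradient estimates, or the argument in \cite{Walpuski2011}*{Lemma A.1}) rather than an energy estimate. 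Everything else in your proposal — including the Case~1 endgame via \autoref{Prop_ASDDecay} and $\pi_I a_\infty=0$ (note $\ker\delta_I\subset\Omega^1$, not $\Omega^2_7$), and the Case~2 reduction to decaying harmonic functions on $\R^4$ after the removable-singularity step across $\{y=0\}$ — matches the paper once the Liouville lemma is available.
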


\begin{proof}[Proof of \autoref{Prop_R8ModelInvertible} assuming \autoref{Prop_R8ModelEstimates}]
  From \autoref{Prop_R8ModelEstimates} it follows that $\bL_I \co \fA^{1,\alpha}_\beta \to \fA^{0,\alpha}_{\beta-1}$ is injective and its image is closed.
  Thus we can identify its cokernel with the kernel of $\bL_I^* \co \bigl(\fA^{0,\alpha}_{\beta-1}\bigr)^* \to \bigl(\fA^{1,\alpha}_\beta\bigr)^*$.
  Since $\beta > -2$, the image of $\pi_I$ is contained in $C^{0,\alpha}_{\beta-1}$ and thus $C^{0,\alpha}_{\beta-1}=\fA^{0,\alpha}_{\beta-1} \oplus \im \pi_I$.
  Via this splitting we can extend any $b \in \ker \bL_I^*$ to an element of $\bigl(C^{0,\alpha}_{\beta-1}\bigr)^*$ which still satisfies $\bL_I^* b = 0$.
  By elliptic regularity $b$ is smooth and it follows from \autoref{Lem_Liouville} that $b$ is invariant under translations in the $\R^4$--direction.
  Now, $b$ must be contained in $C^{1,\alpha}_{-3-\beta}$.
  Since $-3-\beta\in(-3,-1)$, it follows that $b=0$ by \autoref{Prop_R8ModelEstimates}.
  Therefore $\bL_I$ is also surjective; hence, invertible.
\end{proof}

\begin{lemma}[{\cite[Lemma A.1]{Walpuski2011}}]
  \label{Lem_Liouville}
  Let $E$ be a vector bundle of bounded geometry over a Riemannian manifold $X$ of bounded geometry and with subexponential volume growth, and suppose that $D\co C^\infty(X,E)\to{}C^\infty(X,E)$ is a uniformly elliptic operator of second order whose coefficients and their first derivatives are uniformly bounded, that is non-negative, such that $\<Da,a\>\geq 0$ for all $a \in W^{2,2}(X,E)$, and formally self-adjoint.
  If $a \in C^\infty(\R^n\times X,E)$ satisfies
  \begin{equation*}
    (\Delta_{\R^n}+D)a = 0
  \end{equation*}
  and $\|a\|_{L^\infty}$ is finite, then $a$ is constant in the $\R^n$--direction, that is $a(x,y)=a(y)$.
  Here, by slight abuse of notation, we denote the pullback of $E$ to $\R^n\times X$ by $E$ as well.
\end{lemma}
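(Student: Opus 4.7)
The plan is to run a squared-cutoff energy estimate against $a$ using non-negativity of both $\Delta_{\R^n}$ and $D$, then exhaust $\R^n\times X$ using variable-scale cutoffs adapted to the subexponential volume growth of $X$.

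First I would fix a basepoint $y_0\in X$ and choose a product cutoff $\chi=\chi_{R,S}=\phi_R(x)\psi_S(y)$, where $\phi_R\co\R^n\to[0,1]$ is the standard bump equal to $1$ on $B_R(0)$ and supported in $B_{2R}(0)$, and $\psi_S\co X\to[0,1]$ is an analogous cutoff for $B_S(y_0)\subset X$ (obtained by smoothing $d(\cdot,y_0)$; the bounded geometry of $X$ ensures $|\nabla^k\psi_S|\leq C_kS^{-k}$). Testing the equation against the compactly-supported section $\chi^2 a$, integrating the $\Delta_{\R^n}$-term by parts in $x$, and using the identity
\begin{equation*}
  \chi^2\langle a,Da\rangle \;=\; \langle\chi a,D(\chi a)\rangle \;-\; \chi\,\langle a,[D,\chi]a\rangle
\end{equation*}
together with the hypothesis $\langle Du,u\rangle\geq 0$ for $u\in W^{2,2}$, I obtain after absorbing cross terms via Cauchy--Schwarz the Caccioppoli-type inequality
\begin{equation*}
  \int_{B_R\times B_S}|\nabla_{\R^n}a|^2 \;\leq\; C\|a\|_{L^\infty}^2 \int\bigl(|\nabla\chi|^2+\chi|\nabla^2\chi|\bigr).
\end{equation*}
The commutator $[D,\chi]$ is first order, with coefficients controlled by $\nabla\chi$ and $\nabla^2\chi$ (using that the coefficients of $D$ and their first derivatives are uniformly bounded), and it is this pointwise control together with non-negativity of $D$ that makes the absorption go through.

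Second, I would estimate the right-hand side and pass to the limit. The cutoff gradients are supported in the annular regions $(B_{2R}\setminus B_R)\times B_{2S}$ and $B_{2R}\times(B_{2S}\setminus B_S)$, giving
\begin{equation*}
  \int_{B_R\times B_S}|\nabla_{\R^n}a|^2 \;\leq\; C\|a\|_{L^\infty}^2\bigl(R^{n-2}\,V_X(2S)+R^n\,V_X(2S)\,S^{-2}\bigr),
\end{equation*}
where $V_X(r):=\vol_X(B_r(y_0))$. A geometric doubling $S\mapsto 2S$ is insufficient: subexponential growth still allows $V_X$ to grow faster than any polynomial. Instead, I would invoke a Karp-type pigeonhole on the sublinear function $\log V_X$ to produce sequences $S_k\to\infty$ and widths $L_k\to\infty$ with $V_X(S_k+L_k)/L_k^2\to 0$, and replace $\psi_{S_k}$ by a cutoff transitioning from $1$ to $0$ over the longer shell $[S_k,S_k+L_k]$, so that $|\nabla\psi_{S_k}|\leq C/L_k$ and $|\nabla^2\psi_{S_k}|\leq C/L_k^2$. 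With this choice the right-hand side tends to zero for any fixed $R$; monotone convergence then yields $\int_{B_R\times X}|\nabla_{\R^n}a|^2=0$, and letting $R\to\infty$ gives $\nabla_{\R^n}a\equiv 0$, i.e.\ $a(x,y)=a(y)$.

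The main obstacle is the treatment of $D$. Because $D$ is only assumed non-negative and formally self-adjoint, not of the form $\nabla^*\nabla+W$ with $W\geq 0$, there is no pointwise Bochner inequality making $|a|^2$ subharmonic, and the argument must be set up at the level of the global quadratic form $\langle\chi a,D(\chi a)\rangle$. Ellipticity and bounded coefficients of $D$ enter only through pointwise control of $[D,\chi]$, while the subexponential growth enters essentially through the Karp-type variable-scale choice of cutoffs—the naive doubling estimate would fail as soon as $V_X$ outgrows any polynomial.
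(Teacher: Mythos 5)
This lemma is only quoted in the paper (it is imported from \cite{Walpuski2011}*{Lemma~A.1}), so I assess your argument on its own terms: it does not close, and the decisive failure is the limiting step. Your Caccioppoli inequality bounds $\int_{B_R\times B_S}|\nabla_{\R^n}a|^2$ by $C\|a\|_{L^\infty}^2\bigl(R^{n-2}V_X(2S)+R^nV_X(2S)S^{-2}\bigr)$, and you claim that for fixed $R$ this can be sent to zero by a Karp-type choice of radii $S_k$ and widths $L_k$ with $V_X(S_k+L_k)/L_k^2\to0$. Two things go wrong. First, the contribution of the $x$--annulus, $R^{n-2}V_X(2S)$, tends to infinity as $S\to\infty$ whenever $X$ has infinite volume; widening the $y$--cutoff does nothing to it, and coupling $R$ to $S$ reintroduces the other term. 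Second, and more fundamentally, the asserted pigeonhole is false: since $V_X$ is non-decreasing, $V_X(S+L)/L^2\geq V_X(L)/L^2$, so your sequences exist only if $\liminf_{r\to\infty}V_X(r)/r^2=0$, i.e.\ sub-quadratic volume growth along a subsequence. Subexponential growth does not imply this --- $X=\R^m$ with $m\geq 2$ is a counterexample --- and $X=\R^4$ (volume growth $r^4$) is exactly the case in which the lemma is applied in \autoref{Sec_R8Model}, where $\bL_I\bL_I^*=\Delta_{\R^4}+\diag(\delta_I\delta_I^*,\delta_I^*\delta_I)$ on $\R^4\times\R^4$. What subexponential growth actually provides is a sequence of radii along which the volume of a shell is small \emph{relative to} the volume of the enclosed ball; a pure cutoff argument can therefore only show that the \emph{average} of $|\nabla_{\R^n}a|^2$ over large balls is small, and upgrading this to pointwise vanishing requires further input (for instance that the bounded sections $\partial_{x_i}a$ satisfy the same equation, together with local elliptic estimates), none of which appears in your write-up.

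There is also a gap in the derivation of the Caccioppoli inequality itself. The commutator $[D,\chi]$ contains the first-order term $-2g^{ij}(\partial_i\chi)\nabla_j a$, so $\int\chi\<a,[D,\chi]a\>$ produces $\int\chi|\nabla\chi|\,|\nabla_X a|\,|a|$; this cannot be absorbed into $\int|\nabla_{\R^n}(\chi a)|^2$, which controls only the $\R^n$--gradient, and $D\geq 0$ gives no coercivity in the $X$--directions. One can eliminate $\nabla_X a$ by a further integration by parts, or more cleanly via the symmetrised localisation identity $\int\chi^2\<Da,a\>=\<D(\chi a),\chi a\>_{L^2}+\tfrac12\int\<[[D,\chi],\chi]a,a\>$, whose error really is of size $|\nabla\chi|^2|a|^2$; but the single-commutator route you use leaves terms of size $\chi|\nabla\chi|\,|a|^2$ coming from the first-order part of $D$ and the first derivatives of its coefficients, which decay like $1/L$ rather than $1/L^2$ and make the volume problem strictly worse. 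Both the inequality as stated and the passage to the limit therefore need to be reworked.
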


\begin{proof}[Proof of \autoref{Prop_R8ModelEstimates}]
  We restrict to the case of $\bL_I$ as the case $\bL_I^*$ differs only by a slight change in notation.
  First, it is easy to see that there are Schauder estimates, cf.~\cite[Proposition 7.6]{Walpuski2011},
  \begin{equation*}
    \|a\|_{C^{1,\alpha}_\beta}
    \leq c\(\|\bL_I a\|_{C^{0,\alpha}_{\beta-1}}
        + \|a\|_{L^\infty_\beta}\)
  \end{equation*}
  with $c=c(\beta)>0$.
  The crucial step is then to show that if $\beta\in(-3,-1)$ there is a constant $c>0$ such that for all $a\in\fA^{1,\alpha}_\beta$ we have
  \begin{equation*}
    \|a\|_{L^\infty_\beta}
    \leq c\|\bL_I a\|_{C^{0,\alpha}}.
  \end{equation*}
  This is proved by contradiction:
  Suppose the estimate does not hold.
  Then there exists a sequence $a_i\in\fA^{1,\alpha}_\beta$ such that
  \begin{equation*}
    \|a_i\|_{L^{\infty}_\beta}=1
    \qandq
    \|\bL_I a_i\|_{C^{0,\alpha}_{\beta-1}} \leq \frac1i.
  \end{equation*}
  Hence, by the above Schauder estimate
  \begin{equation*}
    \|a_i\|_{C^{1,\alpha}_\beta} \leq 2c.
  \end{equation*}
  Pick $(x_i,y_i) \in \R^4\times\R^4$ such that
  \begin{equation*}
    w(x_i,y_i)^{-\beta}|a_i(x_i,y_i)|=1.
  \end{equation*}
  By translation we can assume that $x_i=0$.
  Without loss of generality one of the following two cases must occur.
  We rule out both of them thus proving the estimate.

  \setcounter{case}{0}
  \begin{case}
    The sequence $|y_i|$ stays bounded.
  \end{case}

  Let $K$ be a compact subset of $\R^8$.
  When restricted to $K$, the elements $a_i$ are uniformly bounded in $C^{1,\alpha}$.
  Thus, by Arzelà--Ascoli, we can assume (after passing to a subsequence) that $a_i$ converges to a limit $a$ in $C^{1,\alpha/2}$.
  Since $K$ was arbitrary, this yields $a \in \Omega^1(\R^8,\fg_E)$ satisfying
  \begin{equation*}
    |a|(x,y)< c (1+|y|)^{\beta}
  \end{equation*}
  as well as 
  \begin{equation*}
    \bL_I a=0 \quad\text{and}\quad
    \pi_I a=0.
  \end{equation*}
  It follows from \autoref{Lem_Liouville} that $a=0$.
  On the other hand we can assume that $y_i$ converges to some point $y \in \R^4$ for which we would have $|a|(0,y)=w(0,y)^{\beta} \neq 0$.
  This is a contradiction.

  \begin{case}
    The sequence $|y_i|$ goes to infinity.
  \end{case}

  Define a rescaled sequence $\tilde a_i$ by
  \begin{equation*}
     \tilde a_i(x,y)\coloneq|y_i|^{-\beta}(\xi_i,a_i)(|y_i|x,|y_i|y)
  \end{equation*}
  and set $\tilde y_i=y_i/|y_i|$.
  The rescaled
  sequence then satisfies
  \begin{gather*}
    \|\tilde a_i\|_{\tilde C^{1,\alpha}_\beta} \leq 2c, \quad
    \|\bL\tilde a_i\|_{\tilde C^{0,\alpha}_{\beta-1}} \leq
    2/i \quad\text{and}\quad
    \tilde w(0,\tilde y_i)^{-\beta}|\tilde a_i(0,\tilde y_i)| \geq 1/2 
  \end{gather*}
  where the norms $\|\cdot\|_{\tilde C^{k,\alpha}_\beta}$ are defined as those in \autoref{Def_R8WeightedNorms}, but with weight function $w(x)=|\pi_2(x)|$ instead of $w(x)=1+|\pi_2(x)|$, and where $L$ is defined by
  \begin{equation*}
    \bL\coloneq\del_t - L
  \end{equation*}
  with
  \begin{equation*}
    L(\xi,a)\coloneq\(\rd^*a, \rd \xi+*(\psi\wedge\rd a)\).
  \end{equation*}

  We can now pass to a limit using Arzelà--Ascoli as before to obtain $\tilde a$ defined over $\R^4\times\(\R^4\setminus\{0\}\)$ satisfying
  \begin{equation*}
    |\tilde a|(x,y) < c |y|^\beta \quad\text{and}\quad L \tilde a=0.
  \end{equation*}
  Since $\beta>-3$, $L\tilde a=0$ holds on all of $\R^8$ in the sense of distributions.
  Hence, by standard elliptic theory, $\tilde a$ extends to a bounded smooth solution of $L\tilde a=0$ on $\R^8$.
  Since $L^*L=\Delta_{\R^4}+\Delta_{\R^4}$, it follows from \autoref{Lem_Liouville} that $\tilde a$ is invariant in the $\R^4$--direction.
  Therefore, we can think of the components of  $\tilde a$ as harmonic functions on $\R^4$.
  These decay to zero at infinity as $\beta<0$ and, hence, must vanish identically.
  On the other hand we know that $|\tilde y_i|=1$ and thus without loss of generality $\tilde y_i$ converges to some point $\tilde y$ in the unit sphere for which $|\tilde a|(0,\tilde y)|\geq \frac12$, contradicting $\tilde a=0$.
\end{proof}

\subsection{The model away from \texorpdfstring{$Q$}{Q}}
\label{Sec_XminusQModel}

\begin{definition}
  Define weighted Hölder norms $\|\cdot\|_{C^{k,\alpha}_{\beta}}$ for tensor fields (with values in $\fg_E$) on $X\setminus Q$ by
  \begin{align*}
    [f]_{C^{0,\alpha}_{\beta}}
    &\coloneq \sup_{d(x,y) \leq w(x,y)}
    w(x,y)^{\alpha-\beta} \frac{|f(x)-f(y)|}{d(x,y)^\alpha}. \\
    \|f\|_{L^{\infty}_{\beta}}
    &\coloneq\|w^{-\beta}f\|_{L^\infty} \qand \\
    \|f\|_{C^{k,\alpha}_{\beta}}
    &\coloneq \sum_{j=0}^k \|\nabla^j f\|_{L^{\infty}_{\beta-j}}
    + [\nabla^j f]_{C^{0,\alpha}_{\beta-j}}.
  \end{align*}
  with weight functions given by
  \begin{equation*}
    w(x) \coloneq r(x) \quad\text{and}\quad w(x,y) \coloneq \min\{w(x),w(y)\}.
  \end{equation*}
  (Recall, that $r \co X \to [0,\infty)$ is defined by $r(x) = d(\cdot, Q)$.
\end{definition}

If we fix a constant $N>0$, then over $V_{[\sqrt\lambda/N,\infty)}$ we can view a tensor field $f$ with values in $\fg_{E_\lambda}$ as one which takes values in $\fg_{E}$ and vice versa.

\begin{prop}\label{Prop_XminusQModelComparison}
  There is a constant $c>0$ such that for
  $\lambda\in(0,\Lambda]$ with respect to the above identification we have
  \begin{equation*}
    \frac{1}{c}\|a\|_{C^{k,\alpha}_{-\ell+\delta}\(V_{[\sqrt\lambda/N,\infty)}\)}
     \leq \|a\|_{C^{k,\alpha}_{\ell,\delta,\lambda}\(V_{[\sqrt\lambda/N,\infty)}\)}
     \leq cN^{-2\delta}\|a\|_{C^{k,\alpha}_{-\ell+\delta}\(V_{[\sqrt\lambda/N,\infty)}\)}
   \end{equation*}
   and
   \begin{equation*}
    \|L_\lambda a - L_{A_0} a\|_{C^{0,\alpha}_{-2,\delta,\lambda}\(V_{[\sqrt\lambda/N,\infty)}\)}
    \leq c \sqrt\lambda/N |a\|_{C^{1,\alpha}_{-1,\delta,\lambda}\(V_{[\sqrt\lambda/N,\infty)}\)}.
  \end{equation*}
\end{prop}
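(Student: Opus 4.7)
My plan is to prove the two estimates separately, since they have different flavours: the first is a pointwise comparison of weight functions, while the second measures how different the linearisations at $A_\lambda$ and $A_0$ are after restricting away from the bubble region.

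For the norm equivalence, I would split $V_{[\sqrt\lambda/N,\infty)}$ into the two pieces $V_{[\sqrt\lambda/N,\sqrt\lambda)}$ and $V_{[\sqrt\lambda,\infty)}$ on which $w_{\ell,\delta;\lambda}$ is defined by different formulae. On the outer piece the weight is literally $r^{-\ell+\delta}$, which matches the weight $r^{-\beta}=r^{\ell-\delta}$ appearing in the unweighted norm (up to the sign convention in the index), so the two norms are equal with constants $1$. On the inner annulus $\sqrt\lambda/N\leq r\leq\sqrt\lambda$ we have $\lambda\leq r$, hence $\lambda^\delta(\lambda+r)^{-\ell-\delta}\sim \lambda^\delta r^{-\ell-\delta}$, and the ratio with $r^{-\ell+\delta}$ is $\lambda^\delta r^{-2\delta}$. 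As $r$ ranges over $[\sqrt\lambda/N,\sqrt\lambda]$, this ratio varies between $1$ and $N^{2\delta}$. Since $\delta<0$, this yields the stated factor $N^{-2\delta}$ in one direction and a uniform constant in the other. The Hölder seminorm contributions are handled identically once one notices that the ball-size constraints $d(x,y)\leq\lambda+\min\{r(x),r(y)\}$ and $d(x,y)\leq\min\{r(x),r(y)\}$ differ only by the small factor $1+\lambda/(\sqrt\lambda/N)=1+N\sqrt\lambda$, which is bounded for $\lambda\in(0,\Lambda]$.

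For the operator comparison, the crucial observation is that $L_\lambda-L_{A_0}$ is the algebraic operator
\begin{equation*}
  (L_\lambda-L_{A_0})a = \bigl(-*[A_\lambda-A_0,*a],\,\pi_7[A_\lambda-A_0,a]\bigr),
\end{equation*}
so it suffices to bound $A_\lambda-A_0$ in an appropriate weighted norm on $V_{[\sqrt\lambda/N,\infty)}$. For $\lambda$ sufficiently small relative to $\zeta$ and $N$, we have $\chi^-_\lambda\equiv 1$ on $V_{[2\lambda,\infty)}$ and $\chi^+\equiv 1$ on $V_{[0,\zeta]}$, so the pregluing formula \eqref{eq:s7:atl} reduces to $A_\lambda-A_0=i_\lambda$ on this region (with a lower-order contribution from the outer cut-off $\rd\chi^+$ that is straightforward to control). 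The bound $\|i_\lambda\|_{C^{0,\alpha}_{-3,0;\lambda}}\leq c\lambda^2$ from \autoref{Prop_ConnectionEstimate} combined with \autoref{Prop_Multiplication} gives
\begin{equation*}
  \|[i_\lambda,a]\|_{C^{0,\alpha}_{-4,\delta;\lambda}}\leq c\lambda^2\|a\|_{C^{1,\alpha}_{-1,\delta;\lambda}}.
\end{equation*}
Converting to the desired $C^{0,\alpha}_{-2,\delta;\lambda}$-norm on $V_{[\sqrt\lambda/N,\infty)}$ amounts to multiplying by $w_{-2,\delta;\lambda}/w_{-4,\delta;\lambda}\sim r^{-2}$ on both branches of the weight, and the lower bound $r\geq\sqrt\lambda/N$ then produces the claimed factor of order $\sqrt\lambda/N$.

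The only mildly tricky point is the bookkeeping of weight conversions so that all powers of $N$ and $\lambda$ match the stated bound; there is no genuinely difficult analysis beyond \autoref{Prop_ConnectionEstimate} and \autoref{Prop_Multiplication}. The comparison with the model on $\R^8$ in \autoref{Prop_LModelComparison} and the comparison here are formally dual: one uses $\lambda\ll r$, the other $r\lesssim\sqrt\lambda$, and together they will let us patch local right inverses for $\fL_\lambda$ in the next section.
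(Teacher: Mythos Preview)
The paper states this proposition without proof, so your outline is the intended argument and is essentially correct.

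One genuine bookkeeping slip: in Part~2, your final conversion does not produce the factor $\sqrt\lambda/N$. From $\|i_\lambda\|_{C^{0,\alpha}_{-3,0;\lambda}}\leq c\lambda^2$ and \autoref{Prop_Multiplication} you get $\|[i_\lambda,a]\|_{C^{0,\alpha}_{-4,\delta;\lambda}}\leq c\lambda^2\|a\|_{C^{1,\alpha}_{-1,\delta;\lambda}}$, and passing to the $C^{0,\alpha}_{-2,\delta;\lambda}$--norm multiplies by $\sup r^{-2}\leq N^2/\lambda$ on $V_{[\sqrt\lambda/N,\infty)}$. The net factor is therefore $cN^2\lambda$, not $\sqrt\lambda/N$. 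Equivalently, one can bound $\|i_\lambda\|_{C^{0,\alpha}_{-1,0;\lambda}(V_{[\sqrt\lambda/N,\infty)})}\leq cN^2\lambda$ directly and then multiply. This is harmless for the application: in the proof of \autoref{Prop_RightInverse} the constant $N$ is fixed before sending $\lambda\to 0$, so any bound of the form $o_\lambda(1)$ (with $N$ fixed) suffices, and $N^2\lambda$ is in fact stronger than $\sqrt\lambda/N$ once $\lambda\ll N^{-6}$. The stated exponent $\sqrt\lambda/N$ in the paper appears to be imprecise; what matters, and what your argument delivers, is smallness as $\lambda\to 0$.

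A second minor point: in Part~1 you write that $r^{-\ell+\delta}$ ``matches'' $r^{-\beta}=r^{\ell-\delta}$. These are only equal when the subscript on the $X\setminus Q$ norm is $\ell-\delta$, not the $-\ell+\delta$ printed in the statement. You are right to flag this as a sign-convention issue: the subscript that makes the norms agree on $r>\sqrt\lambda$ and that lands in the range $(-3,0)$ required by \autoref{Prop_XminusQInverse} (for $\ell=-1,-2$ and $\delta\in(-1,0)$) is $\ell-\delta$. Your weight comparison on the inner annulus is correct once this is fixed.
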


\begin{prop}\label{Prop_XminusQInverse}
  For $\beta\in(-3,0)$ the operator $L_{A_0} \co C^{1,\alpha}_{\beta} \to C^{0,\alpha}_{\beta-1}$ has a right inverse $R_{A_0}$.
\end{prop}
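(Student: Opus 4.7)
The strategy is to combine two ingredients: the surjectivity of $L_{A_0}$ on the compact manifold $X$, and a weighted a priori estimate on $X\setminus Q$. Since $A_0$ is irreducible and unobstructed, $\cH^0_{A_0}=\cH^2_{7;A_0}=0$, so $L_{A_0}$ on standard Hölder or Sobolev spaces over $X$ is surjective with finite-dimensional kernel $\cH^1_{A_0}$, and admits a bounded right inverse taking values in the $L^2$--orthogonal complement of $\cH^1_{A_0}$.

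The heart of the proof is the weighted estimate
\begin{equation*}
  \|a\|_{C^{1,\alpha}_\beta} \leq c\,\|L_{A_0} a\|_{C^{0,\alpha}_{\beta-1}}
  \quad\text{for } a \in C^{1,\alpha}_\beta \text{ with } a \perp_{L^2} \cH^1_{A_0},
\end{equation*}
which I would prove by contradiction and blow-up, in the spirit of \autoref{Prop_R8ModelEstimates}. Interior Schauder estimates reduce the problem to the analogous $L^\infty_\beta$ bound, whose failure would produce $a_i \perp \cH^1_{A_0}$ with $\|a_i\|_{L^\infty_\beta}=1$, $\|L_{A_0} a_i\|_{C^{0,\alpha}_{\beta-1}} \to 0$, and points $x_i\in X\setminus Q$ at which the weighted supremum is essentially attained. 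Two cases arise. If $r(x_i)$ stays bounded below, Arzelà--Ascoli yields a nonzero limit $a$ on $X\setminus Q$ with $L_{A_0} a = 0$; since $\beta > -3 > -\codim Q$, $a$ is locally integrable across $Q$ and extends as a distributional solution on all of $X$, hence is smooth by elliptic regularity, so $a\in\cH^1_{A_0}$---contradicting the orthogonality, which passes to the limit. If $r(x_i)\to 0$, let $p_i\in Q$ be the nearest point and $\lambda_i:=r(x_i)$; rescaling in geodesic normal coordinates adapted to $Q$ by $\tilde a_i(\tilde x) := \lambda_i^{-\beta} a_i(\lambda_i\tilde x)$ produces in the limit a nonzero solution $\tilde a$ on $\R^8\setminus\R^4$ of the flat (Lie-algebra-valued) Dirac operator, with $|\tilde a|\leq c\,\tilde r^\beta$. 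A Liouville argument---Fourier decomposition along the tangential $\R^4$, the identity $D^*D=\Delta_{\R^8}$ on each mode, and reduction to radial ODEs in the normal direction---rules this out: the indicial roots of the Dirac operator on $\R^4\setminus\{0\}$ are $\{k : k\geq 0\}\cup\{-3-k : k\geq 0\}$, and the interval $(-3,0)$ strictly avoids all of them.

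Given the estimate, $R_{A_0}$ is built as follows. For $\eta \in C^{0,\alpha}_{\beta-1}$, the bound $|\eta|\leq c\,r^{\beta-1}$ with $\beta-1>-4$ forces $\eta\in L^p(X)$ for some $p>1$; the surjectivity of $L_{A_0}$ in $L^p$--Sobolev spaces then yields a unique $a\in W^{1,p}(X)$ with $L_{A_0} a = \eta$ and $a\perp_{L^2}\cH^1_{A_0}$. Elliptic regularity makes $a$ smooth on $X\setminus Q$, and the weighted estimate upgrades this to $a\in C^{1,\alpha}_\beta$ with $\|a\|_{C^{1,\alpha}_\beta}\leq c\,\|\eta\|_{C^{0,\alpha}_{\beta-1}}$, so one sets $R_{A_0}\eta := a$. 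The main obstacle is the Liouville statement on $\R^8\setminus\R^4$: although standard in spirit for the flat Dirac operator, the mode-by-mode radial analysis must be executed carefully, and the strict condition $\beta\in(-3,0)$ is precisely what is needed to avoid resonance with the indicial weights.
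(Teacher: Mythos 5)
Your argument is correct in substance, and the heart of it---the weighted Schauder estimate proved by contradiction and blow-up, with the dichotomy between concentration away from $Q$ (ruled out by extending the limit distributionally across $Q$ using $\beta>-3$ and elliptic regularity, contradicting orthogonality to $\cH^1_{A_0}$) and concentration at $Q$ (ruled out by a Liouville statement for the flat model on $\R^4\times(\R^4\setminus\{0\})$)---is exactly the paper's argument. The differences are twofold. First, for the Liouville step the paper does not compute indicial roots: it invokes \autoref{Lem_Liouville} to get translation invariance in the tangential $\R^4$--direction and then reduces to decaying harmonic functions on $\R^4$, which vanish since $\beta<0$; your indicial-root analysis reaches the same conclusion (the roots $\{0,1,2,\dots\}\cup\{-3,-4,\dots\}$ avoid $(-3,0)$) but requires the mode-by-mode ODE work you flag, whereas the paper's route is softer. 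Second, and more substantively, the paper does not pass through unweighted $L^p$--theory on the compact manifold at all: it proves a second a priori estimate $\|a\|_{C^{1,\alpha}_\beta}\le c\|L_{A_0}^*a\|_{C^{0,\alpha}_{\beta-1}}$ for the adjoint, deduces closed range from the first estimate, identifies the cokernel with $\ker L_{A_0}^*$ in the dual weight $-3-\beta\in(-3,0)$, and kills it with the adjoint estimate (using, as you do, that $\ker L_{A_0}^*$ on $X$ is trivial by irreducibility and unobstructedness). Your route buys a shorter existence step at the cost of a regularity upgrade that you state too quickly: the a priori estimate applies only to sections already known to lie in $C^{1,\alpha}_\beta$, and a $W^{1,p}$ solution with $p$ close to $1$ carries no pointwise information near $Q$. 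This is repairable by a routine approximation: cut $\eta$ off near $Q$, solve for the truncations (whose solutions are globally $C^{1,\alpha}$ on $X$ and hence automatically in $C^{1,\alpha}_\beta$ since $\beta<0$), apply the estimate uniformly, and pass to the limit. With that step made explicit the proof is complete.
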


\begin{proof}
  Denote by $\pi\co C^{1,\alpha}_\beta\to \ker L_A$ the $L^2$--projection to the (smooth) kernel of $L_A$.
  This is well defined, because $\beta >-3$.
  We will shortly prove the estimates
  \begin{align*}
    \|a\|_{C^{1,\alpha}_\beta} &\leq c
    \(\|L_A a\|_{C^{0,\alpha}_{\beta-1}} + \|\pi a\|_{L^\infty_\beta}\) \\
    \andq
    \|a\|_{C^{1,\alpha}_\beta} &\leq c
    \|L_A^* a\|_{C^{0,\alpha}_{\beta-1}}.
  \end{align*}
  From the first estimate it follows immediately that the image of $L_A \co C^{1,\alpha}_{\beta} \to C^{0,\alpha}_{\beta-1}$ is closed and its kernel is finite-dimensional (in fact, it can be seen to agree with the smooth kernel of $L_A$).
  To show that $L_A$ has a right inverse it suffices to prove that $\coker L_A=0$.
  Let $b\in\ker L_A^* \iso \coker L_A$.
  Then using elliptic regularity it can be seen that $b$ represents an element in the kernel of $L_A^*\co C^{1,\alpha}_{-3-\beta}\to C^{0,\alpha}_{-4-\beta}$.
  But then $b=0$ by the second estimate.
 
  Now we are left with proving the above estimates.
  We will only prove the first estimate, since the proof of the second estimate is similar, but slightly easier.
  First of all we have the following Schauder estimate
  \begin{equation*}
    \|a\|_{C^{1,\alpha}_{\beta,t}}  \leq c
    (\|L_A a\|_{C^{0,\alpha}_{\beta-1,t}} + \|a\|_{L^\infty_{\beta,t}}).
  \end{equation*}
  To prove that
  \begin{equation*}
    \|a\|_{L^\infty_{\beta,t}} \leq c \(\|L_A a\|_{C^{0,\alpha}_{\beta-1,t}} + \|\pi a\|_{L^\infty_{\beta,t}}\)    
  \end{equation*}
  one argues by contradiction.
  If $a_i$ is a sequence of counterexamples as before, then we can assume that it either gives rise to a non-trivial element $a$ in the kernel of $L_A\co C^{1,\alpha}_\beta\to C^{0,\alpha}_{\beta-1}$ which also satisfies $\pi a=0$ or localises in smaller and smaller neighbourhoods of $Q$.
  To see that the first case cannot occur observe that if $a\in C^{1,\alpha}_{\beta}$ solves $L_Aa=0$ on $X\setminus Q$, then it follows that $L_Aa=0$ on all of $X$ in the sense of distributions and thus $a$ extends smoothly to $X$, since $\beta>-3$.
  This contradicts $\pi a=0$.
  Thus we must be in the second case.
  Rescaling $a_i$ near $Q$ as before yields a non-trivial harmonic function on $\R^4\times \R^4 \setminus \{0\}$ which is bounded by a constant multiple of $|y|^\beta$.
  Since $\beta>-3$ the function extends to $\R^8$ and by \autoref{Lem_Liouville} it is invariant in the $\R^4$--direction.
  Hence, it corresponds to a decaying harmonic function on $\R^4$, since $\beta<0$, and must vanish identically.
  So the second case does not occur either; thus proving that the claimed estimate must hold.
\end{proof}

\subsection{Comparison of \texorpdfstring{$\fK_\lambda$}{K} with \texorpdfstring{$F_{\fI}$}{F}}

\begin{prop}\label{Prop_LFComparison}
  There is a constant $c>0$ such that for all $\lambda\in(0,\Lambda]$
  we have
  \begin{equation*}
    \|(L_\lambda\mu_\lambda-\nu_\lambda F_\fI)\hat\fI\|_{C^{0,\alpha}_{-2,0;\lambda}}
       \leq c \lambda^2 \|\hat\fI\|_{C^{1,\alpha}}.
  \end{equation*}
\end{prop}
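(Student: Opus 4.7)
The proof rests on a linearised form of \autoref{Thm_Haydys2}: differentiating the identity $\fF \fI = \pi_7^0(F_{I(\fI)}^{(1,1)})$ in the direction $\hat\fI \in \Gamma(\fI^*V\fM)$, and using the standard identification of $\hat\fI$ with a fibrewise harmonic, Coulomb-gauged 1-form on $NQ$, yields
\begin{equation*}
  F_\fI \hat\fI = \pi_7^0\bigl((\rd_{I(\fI)} \hat\fI)^{(1,1)}\bigr),
\end{equation*}
where $(p,q)$ refers to the base/fibre splitting $T^*NQ = \pi_1^*T^*Q \oplus \pi_2^*N^*Q$, and where we have used the natural inclusion $\Hom_\Phi(TQ, \fI^*V\fM) \hookrightarrow \Lambda^2_4 \subset \Lambda^2_7$ from \autoref{Prop_Lambda2SplittingHyperkahlerPair}.

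The plan is to match $L_\lambda \mu_\lambda \hat\fI$ with $\nu_\lambda F_\fI \hat\fI$ up to errors of size $O(\lambda^2)$ in $C^{0,\alpha}_{-2,0;\lambda}$, working region by region. Since $\mu_\lambda \hat\fI = \chi^+ s_{1/\lambda}^*\hat\fI$ is supported in $V_{[0, 4\zeta]}$, and $\chi^+\equiv 1$ on $V_{[0,2\zeta]}$, split accordingly. In $V_{[0,2\zeta]}$ first write $L_\lambda \mu_\lambda \hat\fI = L_{I_\lambda} s_{1/\lambda}^*\hat\fI + [L_\lambda - L_{I_\lambda}] s_{1/\lambda}^*\hat\fI$; the commutator involves $A_\lambda - I_\lambda = \chi^-_\lambda a$ and is controlled by combining $\|a\|_{C^{1,\alpha}_{1,0;\lambda}}\leq c$ (\autoref{Prop_ConnectionEstimate}) with the scaling estimate $\|s_{1/\lambda}^*\hat\fI\|_{C^{1,\alpha}_{-3,0;\lambda}}\leq c\lambda^2 \|\hat\fI\|_{C^{1,\alpha}}$ (analogous to \autoref{Prop_ProjectionBounds}), producing $O(\lambda^2)$ via \autoref{Prop_Multiplication}.

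The main term $L_{I_\lambda} s_{1/\lambda}^*\hat\fI$ is analysed by decomposing into base/fibre bidegrees. Since $\hat\fI$ is fibrewise a pure fibre 1-form lying in $\ker \delta_{I(\fI)}$, $\rd_{I_\lambda}^* s_{1/\lambda}^*\hat\fI$ vanishes to leading order, with only geometric curvature corrections of size $O(r)$ from the discrepancy between the flat and actual fibre structures. Similarly, the $(0,2)$ component of $\rd_{I_\lambda} s_{1/\lambda}^*\hat\fI$ is anti-self-dual on the fibre and hence lies in $\Lambda^2_{21}$ under $\pi_7^0$ (a direct check from \autoref{Prop_Lambda2SplittingHyperkahlerPair}). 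Using the compatibility $\rd_{I_\lambda}^{(1,1)} s_{1/\lambda}^* = s_{1/\lambda}^* \rd_{I(\fI)}^{(1,1)}$ together with the linearised Haydys identity above, $\pi_7^0$ applied to the $(1,1)$ part recovers exactly $s_{1/\lambda}^*(F_\fI \hat\fI)$. Replacing $\pi_7^0$ by $\pi_7$ produces a further discrepancy $(\pi_7^1 + \pi_7^{\geq 2})(\rd_{I_\lambda} s_{1/\lambda}^*\hat\fI)$ which, by \autoref{Prop_Pi7TaylorExpansion}, carries weights $r$ or $r^2$ and, combined with the decay $|\rd_{I_\lambda} s_{1/\lambda}^*\hat\fI| = O(\lambda^2/(\lambda+r)^4)$, is again $O(\lambda^2)$. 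The same Taylor expansion applied to $\chi^+ s_{1/\lambda}^*(F_\fI\hat\fI)$ accounts for the difference between $\pi_7$ and $\pi_7^0$ on the $\nu_\lambda$-side. Finally, in the outer shell $V_{[2\zeta, 4\zeta]}$ (where $\rd\chi^+$ is supported) one uses $|s_{1/\lambda}^*\hat\fI| = O(\lambda^2)$ uniformly, so the commutator $[\rd\chi^+,\,\cdot\,]$ is trivially estimated.

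The main technical obstacle is the systematic accounting of the geometric corrections in the bulk region, and in particular isolating the leading-order cancellation $\pi_7^0((\rd_I \hat\fI)^{(1,1)}) = F_\fI \hat\fI$ cleanly through the layered identifications (exponential chart for $NQ\hookrightarrow X$, rescaling by $s_{1/\lambda}$, the $\Hom_\Phi\hookrightarrow\Lambda^2_7$ embedding, and the $\nu_\lambda$ cutoff-and-project operation). Once this is in place, all four error sources---the Taylor remainder of $\pi_7$, the connection difference $A_\lambda - I_\lambda$, the curvature corrections from exp-pullback of metric and connection, and the $\rd\chi^+$ boundary term---reduce to routine weighted Hölder bookkeeping using \autoref{Prop_Multiplication}, \autoref{Prop_ASDDecay}, and \autoref{Prop_ConnectionEstimate}.
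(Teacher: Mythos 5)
Your overall strategy coincides with the paper's: the paper likewise introduces the model operator $\tilde L_\lambda a=(\rd_{I_\lambda}^*a,\pi_7^0(\rd_{I_\lambda}a))$, identifies its restriction to $\Gamma(\fI^*V\fM)$ with $F_\fI$ (your ``linearised Haydys identity''), and reduces the claim to estimating the cut-off mismatch near $r\sim\sigma$, the connection difference $A_\lambda-I_\lambda$, and the Taylor remainder $(\pi_7^1+\pi_7^{\geq2})$ of the projection. There is, however, a genuine quantitative gap in your treatment of the $\pi_7^1$ term. You bound $(\pi_7^1+\pi_7^{\geq 2})(\rd_{I_\lambda}s_{1/\lambda}^*\hat\fI)$ by combining the weight $r$ carried by $\pi_7^1$ with the crude decay $|\rd_{I_\lambda}s_{1/\lambda}^*\hat\fI|=O\(\lambda^2/(\lambda+r)^4\)$. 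Measured in the $C^{0,\alpha}_{-2,0;\lambda}$--norm this gives $w_{-2,0;\lambda}\cdot r\cdot \lambda^2(\lambda+r)^{-4}\sim \lambda^2 r(\lambda+r)^{-2}$, whose supremum (attained at $r\sim\lambda$) is of order $\lambda$, not $\lambda^2$. This loss of a factor of $\lambda$ is fatal for the statement as claimed: \autoref{Cor_LFComparison} would then only produce a bound of order $\lambda^{-\alpha}$, which does not decay and breaks the approximate right inverse construction downstream.

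The missing ingredient is the second half of the observation you already made for $\pi_7^0$: by \autoref{Prop_Pi7TaylorExpansion} the first-order term $\pi_7^1$ \emph{also} vanishes on $\Lambda^- N^*Q$, so $\pi_7^1\((\rd_{I_\lambda}\hat\fI_\lambda)^{0,2}\)=0$ just as for $\pi_7^0$, because the $(0,2)$ component is fibrewise anti-self-dual (this is exactly $\delta_{\fI(x)}(\hat\fI|_{N_xQ})=0$). Hence $\pi_7^1$ only ever acts on the $(2,0)$ and $(1,1)$ components, which involve no fibre derivative and therefore decay like $\lambda^2(\lambda+r)^{-3}$ rather than $\lambda^2(\lambda+r)^{-4}$; then $w_{-2,0;\lambda}\cdot r\cdot\lambda^2(\lambda+r)^{-3}\leq \lambda^2$ as required. (Your $\pi_7^{\geq 2}$ term, with weight $r^2$, is indeed fine with the crude decay, and your treatment of the connection-difference and cut-off terms matches the paper's.) With this one correction your argument is the paper's proof.
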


\begin{cor}\label{Cor_LFComparison}
  There is a constant $c>0$ such that for all $\lambda\in(0,\Lambda]$
  we have
  \begin{equation*}
    \|(\sigma_\lambda L_\lambda\mu_\lambda- F_\fI)\hat\fI\|_{C^{0,\alpha}}
       \leq c \lambda^{1-\alpha} \|\hat\fI\|_{C^{1,\alpha}}.
  \end{equation*}
\end{cor}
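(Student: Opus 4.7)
The plan is to deduce this corollary directly from Proposition \ref{Prop_LFComparison} by composing with the projection $\sigma_\lambda$ and exploiting the identity $\sigma_\lambda \nu_\lambda = \id$, which was noted right after Definition of $\pi_\lambda, \sigma_\lambda$. Concretely, I would write
\begin{equation*}
  (\sigma_\lambda L_\lambda \mu_\lambda - F_\fI)\hat\fI
  = \sigma_\lambda L_\lambda \mu_\lambda \hat\fI - \sigma_\lambda \nu_\lambda F_\fI \hat\fI
  = \sigma_\lambda \bigl(L_\lambda \mu_\lambda - \nu_\lambda F_\fI\bigr)\hat\fI,
\end{equation*}
so the problem reduces to bounding the $C^{0,\alpha}$--norm of $\sigma_\lambda$ applied to the error controlled by Proposition \ref{Prop_LFComparison}.

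Next, I would invoke the second pair of bounds in Proposition \ref{Prop_ProjectionBounds} applied with $\ell = -2$ and $\delta = 0$. Since $\ell = -2 \leq -1$ and $\ell + \delta = -2 \in (-3,-1)$, the hypotheses are satisfied and the proposition gives
\begin{equation*}
  \|\sigma_\lambda \alpha\|_{C^{0,\alpha}}
  \leq c\,\lambda^{1+\ell-\alpha}\|\alpha\|_{C^{0,\alpha}_{\ell,\delta;\lambda}(V_{[0,\sigma)})}
  = c\,\lambda^{-1-\alpha}\|\alpha\|_{C^{0,\alpha}_{-2,0;\lambda}}
\end{equation*}
(the support of $\nu_\lambda F_\fI \hat\fI$ is contained in $V_{[0,\sigma)}$, and one checks that the same holds for the portion of $L_\lambda \mu_\lambda \hat\fI$ that matters, since $\mu_\lambda$ is supported in this tubular region).

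Combining the two estimates yields
\begin{equation*}
  \|(\sigma_\lambda L_\lambda \mu_\lambda - F_\fI)\hat\fI\|_{C^{0,\alpha}}
  \leq c\,\lambda^{-1-\alpha}\|(L_\lambda \mu_\lambda - \nu_\lambda F_\fI)\hat\fI\|_{C^{0,\alpha}_{-2,0;\lambda}}
  \leq c\,\lambda^{-1-\alpha}\cdot\lambda^2\,\|\hat\fI\|_{C^{1,\alpha}}
  = c\,\lambda^{1-\alpha}\|\hat\fI\|_{C^{1,\alpha}},
\end{equation*}
which is the desired inequality.

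There is really no obstacle here beyond matching indices: the only subtlety is to pick the correct weights $(\ell,\delta) = (-2,0)$ so that Proposition \ref{Prop_ProjectionBounds} applies, and to verify that $\sigma_\lambda \nu_\lambda = \id$ (which is explicit in the setup). The exponent accounting then works out exactly: the $\lambda^2$ gain from Proposition \ref{Prop_LFComparison} is partially eaten by the $\lambda^{-1-\alpha}$ cost of converting a weighted norm back to an unweighted one via $\sigma_\lambda$, leaving the claimed $\lambda^{1-\alpha}$.
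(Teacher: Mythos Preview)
Your proposal is correct and matches the intended derivation: the corollary is stated in the paper without a separate proof, and the natural way to obtain it is exactly what you do---apply $\sigma_\lambda$ to the estimate of \autoref{Prop_LFComparison}, use $\sigma_\lambda\nu_\lambda=\id$, and invoke \autoref{Prop_ProjectionBounds} with $(\ell,\delta)=(-2,0)$ to convert the weighted norm, giving $\lambda^{-1-\alpha}\cdot\lambda^2=\lambda^{1-\alpha}$.
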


\begin{proof}[Proof of \autoref{Prop_LFComparison}]
  We use the model operator $\tilde L_\lambda$ defined by
  \begin{equation*}
    \tilde L_\lambda a\coloneq \(\rd_{I_\lambda}^*a,\pi_7^0(\rd_{I_\lambda}a)\).
  \end{equation*}
  If we view $\Gamma(\fI^*V\fM)$ as a subspace of
  $\Omega^1(NQ,\fg_E)$, then on this subspace $\tilde L_\lambda$
  agrees with the linearised Fueter operator $F_{\fI}$.  We thus
  have to estimate the terms in the expression
   \begin{align*}
    L_\lambda\mu_\lambda\hat\fI
    -\nu_\lambda F_{\fI}\hat\fI
    &=L_\lambda (\mu_\lambda \hat\fI -
    \hat\fI_\lambda) + (L_\lambda-\tilde L_\lambda)\hat\fI
    +  s_{1/\lambda}^*F_\fI\hat\fI-\nu_\lambda F_\fI\hat\fI\\ &=:\rI+\rII+\rIII
  \end{align*}
  on $V_{[0,\zeta)}$.
  It is easy to see that
  \begin{equation*}
    \|\rI\|_{C^{0,\alpha}_{-2,0;\lambda}(V_{[0,\zeta)})}
    +\|\rIII\|_{C^{0,\alpha}_{-2,0;\lambda}(V_{[0,\zeta)})} \leq c\lambda^2
    \|\hat\fI\|_{C^{1,\alpha}}.
  \end{equation*}
  by using that fact that $\rI$ and $\rIII$ are supported in
  $V_{[\sigma/2,\sigma)}$ and the estimates
  \begin{equation*}
    \|L_\lambda a\|_{C^{0,\alpha}_{-2,0;\lambda}(V_{[0,\sigma)})}
    \leq c \|a\|_{C^{1,\alpha}_{-1,0;\lambda}(V_{[0,\sigma)})} 
    \qandq
    \|F_\fI\hat\fI\|_{C^{0,\alpha}}\leq c \|\hat\fI\|_{C^{1,\alpha}}
  \end{equation*}
  as well as
  \begin{align*}
    \|\mu_\lambda\hat\fI
    -\hat\fI_\lambda\|_{C^{k,\alpha}_{-\ell,0,\lambda}(V_{[\sigma/2,\sigma)})}
    &\leq \|\chi^+-1\|_{{C^{k,\alpha}_{\ell+3,0;\lambda}}(V_{[\sigma/2,\sigma)})}
    \cdot\|\hat \fI_\lambda\|_{C^{k,\alpha}_{-3,0;\lambda}(V_{[\sigma/2,\sigma)})} \\
    &\leq c\lambda^2 \|\hat\fI\|_{C^{k,\alpha}}
  \end{align*}
  and a similar estimate for $\nu_\lambda$.

  The key for the estimate of $\rII$ is to notice that
  \begin{equation*}
    \pi_7^0\((\rd_{I_\lambda}\hat\fI)^{0,2}\) =
    \pi_7^1\((\rd_{I_\lambda}\hat\fI)^{0,2}\) = 0,
  \end{equation*}
  because $\delta_{\fI(x)}(\hat\fI|_{N_xQ})=0$ and $\pi_7^0$ and $\pi_7^1$ vanish on $\Lambda^- NQ$.  
  Therefore,
  \begin{align*}
    \rII 
    &= \pi_7\((A_\lambda - I_\lambda)\wedge \hat\fI_\lambda\)
    + \pi_7^1\((\rd_{I_\lambda}\hat\fI_\lambda)^{2,0} + (\rd_{I_\lambda}\hat\fI_\lambda)^{1,1}\)
    + \pi_7^{\geq 2}(\rd_{I_\lambda}\hat\fI_\lambda) \\
    &=: \rII_1 + \rII_2 + \rII_3.
  \end{align*}
  It follows from \autoref{Prop_ConnectionEstimate} that
  \begin{equation}
    \label{eq:aie}
    \|A_\lambda-I_\lambda\|_{C^{0,\alpha}_{1,0;\lambda}(V_{[0,\sigma)})}
    = \|\chi_\lambda^-a + (\chi^+-1)i_\lambda\|_{C^{0,\alpha}_{1,0;\lambda}(V_{[0,\sigma)})}
    \leq c
  \end{equation}
  which in conjunction with 
  \begin{equation}
    \label{Eq_fIComparison}
    \|\hat\fI_\lambda\|_{C^{k,\alpha}_{-3,0;\lambda}(V_{[0,\sigma)})} \leq c
    \lambda^2 \|\hat\fI\|_{C^{k,\alpha}}
  \end{equation}
  yields
  \begin{equation*}
    \|\rII_1\|_{C^{0,\alpha}_{-2,0;\lambda}}
    \leq c\lambda^2\|\hat\fI\|_{C^{1,\alpha}}.
  \end{equation*}
  $\rII_2$ and $\rII_3$ can be estimated using \autoref{Prop_Pi7TaylorExpansion}, \autoref{Prop_ConnectionEstimate} and \eqref{Eq_fIComparison}.
\end{proof}

\subsection{Estimate of \texorpdfstring{$\fp_\lambda$}{p} and \texorpdfstring{$\fq_\lambda$}{q}}

\begin{prop}\label{Prop_OffDiagonalEstimate}
  For $\delta\in(-1,0)$ there exists a constant $c>0$ such that
  for all  $\lambda\in(0,\Lambda]$ we have
  \begin{align*}
    \|\sigma_\lambda\fp_\lambda a\|_{C^{0,\alpha}} 
      &\leq c\lambda^{-\alpha} \|\rho_\lambda a\|_{C^{1,\alpha}_{-1,\delta;\lambda}} \qand \\
    \|\fq_\lambda a\|_{C^{0,\alpha}_{-2,\delta;\lambda}} 
      &\leq c \lambda^{2+\delta/2-\alpha} \|\pi_\lambda a\|_{C^{1,\alpha}}.
  \end{align*}
\end{prop}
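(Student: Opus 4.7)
The plan is to treat the two estimates separately, with the estimate on $\fq_\lambda$ being the simpler of the two and serving as a warm-up for the mechanism used in the first.

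For the second estimate, I would exploit the key identity $\tau_\lambda\nu_\lambda = 0$, which follows immediately from $\sigma_\lambda\nu_\lambda = \id$ (one has $\tau_\lambda\nu_\lambda = \nu_\lambda - \nu_\lambda\sigma_\lambda\nu_\lambda = 0$). Since $\fq_\lambda a = \tau_\lambda L_\lambda\mu_\lambda\pi_\lambda a$, inserting the trivially vanishing term $\tau_\lambda\nu_\lambda F_\fI\pi_\lambda a = 0$ gives
\begin{equation*}
  \fq_\lambda a = \tau_\lambda(L_\lambda\mu_\lambda - \nu_\lambda F_\fI)\pi_\lambda a.
\end{equation*}
\autoref{Prop_LFComparison} supplies $\|(L_\lambda\mu_\lambda - \nu_\lambda F_\fI)\hat\fI\|_{C^{0,\alpha}_{-2,0;\lambda}} \leq c\lambda^2\|\hat\fI\|_{C^{1,\alpha}}$; \autoref{Cor_DeltaNorm} converts this to the $\delta$-weighted norm at a cost of $\lambda^{\delta/2}$; and \autoref{Prop_ProjectionBounds} bounds $\|\tau_\lambda\| \leq c\lambda^{-\alpha}$ on $C^{0,\alpha}_{-2,\delta;\lambda}$. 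Multiplying the three factors together yields exactly $c\lambda^{2+\delta/2-\alpha}\|\pi_\lambda a\|_{C^{1,\alpha}}$.

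For the first estimate, $\sigma_\lambda\fp_\lambda = \sigma_\lambda L_\lambda\rho_\lambda$ by the same cancellation $\sigma_\lambda\nu_\lambda = \id$. A direct application of \autoref{Prop_ProjectionBounds} with $\ell = -2$ to $L_\lambda(\rho_\lambda a) \in C^{0,\alpha}_{-2,\delta;\lambda}$ only yields a prefactor of $\lambda^{-1-\alpha}$, missing one full power of $\lambda$. The plan is to recover the missing power from the defining relation $\pi_\lambda(\rho_\lambda a) = 0$, which the naive estimate ignores. Setting $b := \rho_\lambda a$ and expanding the pointwise definition,
\begin{equation*}
  (\sigma_\lambda L_\lambda b)(x) = \sum_\beta \beta \int_{N_xQ}\langle \rd_{A_\lambda}b,\nu_\lambda\beta\rangle,
\end{equation*}
where the sum runs over an $\<\nu_\lambda\cdot,\nu_\lambda\cdot\>$-orthonormal basis of $\Hom_\Phi(T_xQ,V\fM_{\fI(x)})$, I would decompose $\rd_{A_\lambda} = \rd_{A_\lambda}^\| + \rd_{A_\lambda}^\perp$ into tangential and normal parts relative to $Q$ and integrate by parts fiber-wise in the normal direction. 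The boundary terms decay with $\lambda$ by \autoref{Prop_ASDDecay}, and the remaining integral becomes $\int_{N_xQ}\langle b,(\rd_{A_\lambda}^\perp)^*\nu_\lambda\beta\rangle$. The adiabatic principle—the dual side of \autoref{Prop_LFComparison}—says that $(\rd_{A_\lambda}^\perp)^*\nu_\lambda\beta$ is of the form $\mu_\lambda\kappa$ up to an error of order $\lambda$; the leading term then pairs trivially with $b$ since $\pi_\lambda b = 0$, and only the error and the tangential contribution $\int\langle\rd_{A_\lambda}^\|b,\nu_\lambda\beta\rangle$ survive, each bounded by the same scaling argument used in the proof of \autoref{Prop_LFComparison}.

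The main technical obstacle will be the quantitative execution of the adiabatic-adjoint identity $(\rd_{A_\lambda}^\perp)^*\nu_\lambda\beta = \mu_\lambda\kappa + O(\lambda)$ in the appropriate weighted Hölder norm. This requires combining the Taylor expansion of $\pi_7$ from \autoref{Prop_Pi7TaylorExpansion}, the decay rates from \autoref{Prop_ASDDecay}, the connection and curvature estimates from \autoref{Prop_ConnectionEstimate} and \autoref{Prop_CurvatureEstimate}, and the rescaled comparison with the model $\bL_I$ from \autoref{Prop_LModelComparison} in the inner region $V_{[0,\sqrt\lambda)}$, matched with straightforward pointwise estimates in the outer region $V_{[\sqrt\lambda,\sigma)}$. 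The bookkeeping needed to extract precisely one extra power of $\lambda$—no more and no less—while absorbing the $\lambda^{-\alpha}$ inefficiency coming from the Hölder exponent is the delicate part.
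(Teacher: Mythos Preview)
Your argument for $\fq_\lambda$ is correct and is exactly what the paper does: write $\fq_\lambda a = \tau_\lambda(L_\lambda\mu_\lambda - \nu_\lambda F_\fI)\pi_\lambda a$ using $\tau_\lambda\nu_\lambda = 0$, then combine \autoref{Prop_LFComparison}, \autoref{Cor_DeltaNorm}, and \autoref{Prop_ProjectionBounds}.

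For $\fp_\lambda$ your route diverges from the paper, and the plan as written has a genuine gap. The paper does not integrate by parts. Instead it introduces \emph{untruncated} fibrewise projections $\tilde\pi_\lambda,\tilde\sigma_\lambda$ (defined like $\pi_\lambda,\sigma_\lambda$ but without the cut-off $\chi^+$) together with the model operator $\tilde L_\lambda a := (\rd_{I_\lambda}^*a,\pi_7^0(\rd_{I_\lambda}a))$, and uses the \emph{exact} identity $\tilde\sigma_\lambda\tilde L_\lambda\tilde\rho_\lambda = 0$. This identity is a consequence of the block-diagonal behaviour of $\bL_I$ with respect to the fibrewise decomposition $\ker\pi_I\oplus\im\pi_I$, which in turn comes from $\delta_I$ commuting with the $\R\oplus\Lambda^+$-action. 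One then writes
\[
  \fp_\lambda a
  = \bar\sigma_\lambda(L_\lambda-\tilde L_\lambda)\rho_\lambda a
  + (\bar\sigma_\lambda-\tilde\sigma_\lambda)\tilde L_\lambda\rho_\lambda a
  + \tilde\sigma_\lambda\tilde L_\lambda(\rho_\lambda-\tilde\rho_\lambda)a
\]
and estimates each difference term as in the proof of \autoref{Prop_LFComparison}.

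Your integration-by-parts scheme misidentifies the cancellation mechanism. First, in the flat model one has $(\rd_I^\perp)^*\beta = 0$ outright (since $\rd_I^*\kappa = 0$ for $\kappa\in\ker\delta_I$), not $\mu\kappa$; so the normal term is already small without invoking $\pi_\lambda b = 0$. The real difficulty is the tangential contribution $\int_{N_xQ}\langle\rd^\|_{A_\lambda}b,\nu_\lambda\beta\rangle$, and your appeal to ``the same scaling argument used in the proof of \autoref{Prop_LFComparison}'' does not apply: that argument relies on the specific decay $|s_{1/\lambda}^*\hat\fI|\lesssim\lambda^2(\lambda+r)^{-3}$ of elements of $\im\mu_\lambda$, whereas here $b = \rho_\lambda a$ is a general element with $\|b\|_{C^{1,\alpha}_{-1,\delta;\lambda}}$ bounded. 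Since the norms weight tangential and normal derivatives identically, $\rd^\| b$ is only controlled in $C^{0,\alpha}_{-2,\delta;\lambda}$, and pairing with $\nu_\lambda\beta$ via \autoref{Prop_ProjectionBounds} at $\ell=-2$ recovers only $\lambda^{-1-\alpha}$. The missing power must come from differentiating the constraint $\pi_\lambda b = 0$ along $Q$ --- which is precisely what the operator identity $\tilde\sigma_\lambda\tilde L_\lambda\tilde\rho_\lambda = 0$ packages cleanly.
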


\begin{proof}
  First note that the second estimate is
  an immediate consequence of \autoref{Prop_ProjectionBounds} and \autoref{Prop_LFComparison}, because
  \begin{equation*}
    \fq_\lambda a =
    \tau_\lambda(L_\lambda\mu_\lambda-\nu_\lambda
    F_{\fI}) \mu_\lambda a,
  \end{equation*}
  since $\tau_\lambda \nu_\lambda=0$.  Now, to estimate $\fp_\lambda$
  we define 
  \begin{equation*}
    \tilde\pi_\lambda\co \Omega^1(NQ,\fg_{E(\fI_\lambda)}) \to \Gamma(\fI^*V\fM)\subset \Omega^1(NQ,\fg_{E(\fI_\lambda)})
  \end{equation*}
  by
  \begin{equation*}
    (\tilde\pi_\lambda a)(x) \coloneq \sum_\kappa \int_{N_xQ} \<a,\kappa\> \kappa
  \end{equation*}
  and
  \begin{equation*}
    \tilde\sigma_\lambda\co \Omega^2(NQ,\fg_{E(\fI_\lambda)}) \to \Gamma(\Hom_\Phi(TQ,\fI^*V\fM))\subset \Omega^2(NQ,\fg_{E(\fI_\lambda)})
  \end{equation*}
  by
  \begin{equation*}
    (\tilde\sigma_\lambda \alpha)(x) \coloneq \sum_\beta \int_{N_xQ} \<\alpha,\beta\> \beta.
  \end{equation*}
  Here, at each point $x\in Q$, $\kappa$ runs through an orthonormal basis of $V\fM_{\fI(x)}$ and $\beta$ runs through an orthonormal basis of $\Hom_\Phi(T_xQ,V\fM_{\fI(x)})$.
  We set $\tilde\rho_\lambda\coloneq\id-\tilde\pi_\lambda$ and $\tilde\tau_\lambda\coloneq\id-\tilde\sigma_\lambda$.
  One can check that $\tilde\sigma_\lambda\tilde L_\lambda\tilde\rho_\lambda=0$.
  For $a$ supported in $V_{[0,\sigma)}$, which we can assume without loss of generality,
  \begin{align*}
    \fp_\lambda a
    &=\bar\sigma_\lambda (L_\lambda-\tilde L_\lambda)\rho_\lambda a
    + (\bar\sigma_\lambda-\tilde\sigma_\lambda) \tilde L_\lambda\rho_\lambda a
    + \tilde\sigma_\lambda \tilde L_\lambda(\rho_\lambda-\tilde\rho_\lambda) a \\
    &= \bar\sigma_\lambda\rI + \rII + \tilde\sigma_\lambda\rIII.
  \end{align*}
  The terms $\rII$ and $\rIII$ (resp.~$\rI)$ can be estimated similar to $\rI$ and $\rIII$ (resp.~$\rII$) in the proof of \autoref{Prop_LFComparison}.
\end{proof}

\subsection{Patching local inverses}

\begin{proof}[Proof of \autoref{Prop_RightInverse}]
  Fix $y\in \fY_\lambda$ and set
  \begin{equation*}
    u \coloneq \bar\sigma_\lambda y \qandq v \coloneq \tau_\lambda y.
  \end{equation*}

  \setcounter{step}{0}
  \begin{step}
    An approximate inverse for $u$.
  \end{step}

  Denote by $G_\fI$ a fixed right inverse of $F_\fI$ and set
  \begin{equation*}
    z \coloneq \mu_\lambda G_\fI \sigma_\lambda u.
  \end{equation*}
  We have
  \begin{equation*}
    \|z\|_{\fX_\lambda} \leq c  \|y\|_{\fY_\lambda}
  \end{equation*}
  and by \autoref{Cor_LFComparison} and \autoref{Prop_OffDiagonalEstimate} we have
  \begin{equation}\label{Eq_diff0}
    \|L_\lambda z - u\|_{\fY_\lambda}
    \leq c \lambda^{1-\alpha} \|y\|_{\fY_\lambda}.
  \end{equation}

  \begin{step}
    Choice of cut-off functions.
  \end{step}

  We construct an approximate inverse for $v$ by finding local
  approximate inverses and then patching these together.  This
  requires two kinds of cut-off functions.  The first kind is
  constructed as follows: Let $\chi\co[0,\infty)\to[0,1]$ denote the
  smooth-cut off function chosen in \autoref{Sec_Approx} which
  vanishes on $[0,1]$ and is equal to one on $[2,\infty)$.  We define
  $\chi_\lambda\co X\to[0,1]$ by
  \begin{equation*}
    \chi_\lambda(x)\coloneq\chi(r(x)/\sqrt\lambda).
  \end{equation*}
  Then
  \begin{equation*}
    \|\chi_\lambda\|_{C^{0,\alpha}_{0,0;\lambda}}\leq c.
  \end{equation*}
  Fix a small constant $\epsilon>0$, a large constant $N\gg 1$, and note that in the following we can choose the constant $c>0$ independent of $\epsilon$ and $N$.
  Throughout, we will make use of $\lambda\ll\epsilon$ and $\lambda \ll 1/N$.
  We can pick a finite number of points $\{x_\gamma : \gamma \in \Gamma \}\subset Q$ such that the balls $B_\epsilon(x_\gamma)$ cover all of $Q$ and a partition of unity $1 = \sum_{\gamma \in \Gamma} \chi_{\gamma}$ subordinate to this cover such that
  \begin{equation*}
    \|\chi_{\gamma}\|_{C^{0,\alpha}_{0,0;\lambda}\(\supp(1-\chi_\lambda)\)}
    \leq c\epsilon^{-\alpha}.
  \end{equation*}
  We can now write
  \begin{equation*}
    v = \sum_{\gamma \in \Gamma} v_{\gamma} + v_{0}
  \end{equation*}
  with
  \begin{equation*}
    v_{\gamma} \coloneq (1-\chi_\lambda) \chi_\gamma v \qandq
    v_{0} \coloneq \chi_\lambda v.
  \end{equation*}
  Although $v_0$ and the $v_\gamma$ depend on $\lambda$ we choose not to make this dependence explicit in order not to clutter the notation any more.
  By construction we have
  \begin{equation}\label{eq:v_est}
    \sum_\gamma \|v_{\gamma}\|_{C^{0,\alpha}_{-2,\delta;\lambda}}
    + \|v_0\|_{C^{0,\alpha}_{-2,\delta;\lambda}} 
    \leq c\epsilon^{-\alpha} \|v\|_{C^{0,\alpha}_{-2,\delta;\lambda}}.
  \end{equation}

  The second kind of cut-off functions is constructed as follows:
  We choose $\beta^\pm_{\lambda,N}\co X\to[0,1]$ such that
  \begin{align*}
    \beta^+_{\lambda,N}(x) &=
    \begin{cases}
      1 & r(x)\leq 2\sqrt\lambda \\
      0 & r(x)\geq 2N\sqrt\lambda
    \end{cases} \intertext{and}
    \beta^-_{\lambda,N}(x) &=
    \begin{cases}
      0 & r(x)\leq \sqrt\lambda/N \\
      1 & r(x)\geq \sqrt\lambda
    \end{cases}
  \end{align*}
  as well as
  \begin{equation}
    \label{Eq_beta}
    \|\rd \beta_{\lambda,N}^\pm\|_{C^{0,\alpha}_{-1,0;\lambda}} \leq
    c/\log(N)
    \qandq
    \|\beta_{\lambda,N}^\pm\|_{C^{0,\alpha}_{0,0;\lambda}} \leq
    c.
  \end{equation}
  This can be arranged by interpolating between $0$ and $1$ logarithmically, i.e., by defining $\beta^+_{\lambda,N}$ as an appropriate smoothing of $\log(2N\sqrt\lambda/r)/\log(N)$ in the intermediate region and similarly $\beta^-_{\lambda,N}$ as a smoothing of $\log(Nr/\sqrt\lambda)/\log(N)$.
  Moreover, we choose $\tilde\chi_\gamma\co Q\to[0,1]$ such that $\tilde\chi_\gamma$ equals one on $B_\epsilon(x_\gamma)$, $\tilde\chi_\gamma$ vanishes outside $B_{2\epsilon}(x_\gamma)$ and satisfies
  \begin{equation}\label{Eq_tildechi}
    \|\rd\tilde\chi_\gamma\|_{C^{0,\alpha}_{-1,0;\lambda}\(\supp\beta^+_{\lambda,N}\)}
    \leq cN\sqrt\lambda/\epsilon^{1+\alpha} \qandq 
    \|\tilde\chi_\gamma\|_{C^{0,\alpha}_{0,0;\lambda}\(\supp\beta^+_{\lambda,N}\)}
    \leq c.
  \end{equation}

  \begin{step}
    Construction of local approximate inverses.
  \end{step}

  Let $I_\gamma$ be the ASD instanton obtained by restricting $I=I(\fI)$ to $N_{x_\gamma}Q$.
  Using the identifications and the notation of \autoref{Sec_R8Model} we define
  \begin{equation*}
    \tilde w_\gamma \coloneq s_{1,\lambda}^{-1} \bL_{I_\gamma}^{-1} \rho_{I_\gamma}
    s_{2,\lambda} v_\gamma \qandq
    w_\gamma \coloneq \rho_\lambda \tilde\chi_\gamma\beta^+_{\lambda,N}\tilde w_\gamma.
  \end{equation*}
  where $\rho_{I_\gamma}\coloneq\id-\pi_{I_\gamma}$.  
  Under the identifications employed in \autoref{Sec_R8Model} the projections $\pi_\lambda$ and $\sigma_\lambda$ are identified.
  From $\sigma_\lambda v=0$ one can deduce that
  \begin{equation*}
    \|\pi_{I_\gamma} s_{2,\lambda}v_\gamma\|_{C^{0,\alpha}_{-2-\delta}} \leq
    c \epsilon 
    \|s_{2,\lambda}v_\gamma\|_{C^{0,\alpha}_{-2-\delta}}
    \leq c \epsilon
    \|v_\gamma\|_{C^{0,\alpha}_{-2,\delta;\lambda}}.
  \end{equation*}
  Using \autoref{Prop_LModelComparison} we conclude that
  \begin{equation}
    \label{Eq_wgamma}
    \|\tilde w_\gamma\|_{C^{1,\alpha}_{-1,\delta;\lambda}(V_{2\epsilon,\zeta})}
    \leq c\|s_{1,\lambda}\tilde w_\gamma\|_{C^{1,\alpha}_{-1+\delta}(U_{2\epsilon,\infty;\lambda})}
   \leq c\|v_\gamma\|_{C^{0,\alpha}_{-2,\delta;\lambda}}
  \end{equation}
  and
  \begin{equation}\label{Eq_diffgamma}
    \|L_\lambda \tilde w_\gamma - v_\gamma\|_{C^{0,\alpha}_{-2,\delta;\lambda}(V_{2\epsilon,\zeta})}
    \leq c\epsilon \|v_\gamma\|_{C^{0,\alpha}_{-2,\delta;\lambda}}
  \end{equation}
  Since $\pi_{I_\gamma}(s_{1,\lambda}\tilde w_\gamma)=0$, it follows that
  \begin{equation*}
    \|\tilde\pi_{I_\gamma} s_{1,\lambda} \tilde w_\gamma\|_{C^{1,\alpha}_{-1+\delta}(U_{2\epsilon,\infty;\lambda})}
    \leq c \epsilon \|s_{1,\lambda}\tilde w_\gamma\|_{C^{1,\alpha}_{-1+\delta}(U_{2\epsilon,\infty;\lambda})}
  \end{equation*}
  here $\tilde\pi_{I_\gamma}$ is defined like $\pi_{I_\gamma}$ but with $\ker \delta_{I|_{N_{\exp_{x_\gamma}(\lambda\cdot-)}Q}}$ instead of $\ker \delta_{I|_{N_{x_\gamma}Q}}$.
  Therefore,
  \begin{equation}\label{Eq_piwgamma}
    \|\bar\pi_\lambda
    w_\gamma\|_{C^{1,\alpha}_{-1,\delta;\lambda}}
    \leq c \epsilon \|v_\gamma\|_{C^{1,\alpha}_{-1,\delta;\lambda}}
  \end{equation}
  and it follows that
  \begin{align*}
    \sum_\gamma \|w_\gamma\|_{C^{1,\alpha}_{-1,\delta;\lambda}}
    &\leq c(1+N\sqrt\lambda/\epsilon^{1+\alpha}+1/\log(N)) \sum_\gamma \|v_\gamma\|_{C^{0,\alpha}_{-2,\delta;\lambda}} \\
    &\leq c\epsilon^{-\alpha}(1+N\sqrt\lambda/\epsilon^{1+\alpha}+1/\log(N)) \|v\|_{C^{1,\alpha}_{-1,\delta;\lambda}}.
  \end{align*}

  By \autoref{Prop_XminusQModelComparison}, $w_0 \coloneq \beta^-_{\lambda,N}R_{A_0} v_0$, with $R_{A_0}$ as in \autoref{Prop_XminusQInverse}, satisfies
  \begin{equation}\label{Eq_w0_est}
    \|w_0\|_{\fX_\lambda}
    \leq c \|v_0\|_{\fY_\lambda}.
  \end{equation}

  Combining all of the above we see that the $\tilde R_\lambda\co \fY_\lambda\to \fX_\lambda$ defined by
  \begin{equation*}
    \tilde R_\lambda y
    \coloneq z + \sum_{\gamma}  w_\gamma
    +  w_0.
  \end{equation*}
  is bounded by $c\epsilon^{-\alpha}(1+N\sqrt\lambda/\epsilon^{1+\alpha}+1/\log(N))$.

  \begin{step}
    $\tilde R_\lambda$ is an approximate right inverse to $L_\lambda$.
  \end{step}

  We need to estimate the three types of terms
  \begin{align*}
    \rI &\coloneq \|L_\lambda z - u\|_{\fY_\lambda}, \\
    \rII_\gamma &\coloneq  \|L_\lambda w_\gamma - v_\gamma\|_{\fY_\lambda} \qand \\
    \rIII &\coloneq \|L_\lambda w_0 - v_0\|_{\fY_\lambda}.
  \end{align*}
  We have already treated $\rI$ with \eqref{Eq_diff0}.
  Now,
  \begin{align*}
    \rII_\gamma 
    = \|L_\lambda w_\gamma - v_\gamma\|_{\fY_\lambda}
    &\leq \lambda^{-\delta/2}\|L_\lambda \rho_\lambda \tilde\chi_\gamma \beta^+_{\lambda,N} \tilde w_\gamma - v_\gamma\|_{C^{0,\alpha}_{-2,\delta;\lambda}} \\
    &\qquad+
    \lambda\|\sigma_\lambda L \rho_\lambda \tilde\chi_\gamma \beta^+_{\lambda,N} \tilde w_\gamma - \sigma_\lambda v_\gamma\|_{C^{0,\alpha}}
  \end{align*}
  Using \eqref{Eq_wgamma}, \autoref{Prop_OffDiagonalEstimate} and the fact that $\pi_\lambda v = 0$ the last term can be seen to be bounded by $c\lambda^{1-\alpha}\|v_\gamma\|_{\fY_\lambda}$.
  To control the first term use the fact that on the support of $v_\gamma$ we have $\tilde\chi_\gamma\beta^+_{\lambda,N}=1$, \eqref{Eq_beta}, \eqref{Eq_tildechi}, \eqref{Eq_wgamma}, \eqref{Eq_diffgamma} and \eqref{Eq_piwgamma} to derive
  \begin{align*}
    \|L_\lambda \rho_\lambda \tilde\chi_\gamma\beta^+_{\lambda,N}\tilde w_\gamma - v_\gamma\|_{C^{0,\alpha}_{-2,\delta;\lambda}} 
    &\leq c\|L_\lambda \tilde w_\gamma - v_\gamma\|_{C^{0,\alpha}_{-2,\delta;\lambda}(V_{2\epsilon,\zeta})} \\
    &\qquad
    + c\|\rd(\tilde\chi_\lambda \beta^+_{\lambda,N})\|_{C^{0,\alpha}_{-1,0;\lambda}(\supp\beta^+_{\lambda,N})}
    \|\tilde w_\gamma\|_{C^{0,\alpha}_{-1,\delta;\lambda}(V_{2\epsilon,\zeta})} \\
    &\qquad
    + c\|\bar\pi_\lambda \tilde\chi_\gamma\beta^+_{\lambda,N}\tilde w_\gamma\|_{C^{1,\alpha}_{-1,\delta;\lambda}} \\
    &\leq c(\epsilon + 1/\log(N) + N\sqrt\lambda/\epsilon) \|v_\lambda\|_{C^{0,\alpha}_{-2,\delta;\lambda}}.
  \end{align*}
  Similarly,
  \begin{equation*}
    \rIII \leq c(\sqrt\lambda+1/\log(N)) \|y\|_{\fY_\lambda}
  \end{equation*}

  Putting everything together we obtain
  \begin{equation*}
    \|L_\lambda\tilde R_\lambda y -y\|_{\fY_\lambda}
    \leq c\epsilon^{-\alpha}(\epsilon+1/\log N+N\sqrt\lambda/\epsilon) \|y\|_{\fY_\lambda}.
  \end{equation*}
  By choosing $\epsilon$ small enough, $N$ large enough and $\lambda$ small enough we can make the factor in front of $\|y\|_{\fY_\lambda}$ arbitrarily small.

  \begin{step}
    Construction of $R_\lambda$.
  \end{step}

  We can arrange that
  \begin{equation*}
    \|L_\lambda \tilde R_\lambda y - y\|_{\fY_\lambda}
    \leq \frac12 \|y\|_{\fY_\lambda}.
  \end{equation*}
  for all $\lambda\in(0,\Lambda]$;
  hence, the series
  \begin{equation*}
    R_\lambda
      \coloneq \tilde R_\lambda (L_\lambda \tilde R_\lambda)^{-1}
      = \tilde R_\lambda \sum_{k=0}^\infty \(\id -L_\lambda
      \tilde R_\lambda\)^k
  \end{equation*}
  converges and constitutes a right inverse for $L_\lambda$.
  Clearly, $R_\lambda$ is bounded uniformly with respect to $\lambda\in(0,\Lambda]$.
\end{proof}

%%% Local Variables: 
%%% mode: latex
%%% TeX-master: "icf"
%%% End: 

\section{Conclusion of  the proof of \autoref{Thm_A}}

The last ingredient we need for the proof of \autoref{Thm_A} is the following estimate on the polarisation
\begin{equation*}
  Q(a_1,a_2)\coloneq\frac12\pi_7([a_1\wedge a_2])
\end{equation*}
of the quadratic form $Q$ appearing in \eqref{Eq_A}.

\begin{prop}
  \label{Prop_QuadraticEstimate}
  There is a constant $c>0$ such that for all $\lambda \in (0,\Lambda]$ we have
  \begin{align*}
    &\|\tau_\lambda Q(a_1,a_2)\|_{C^{0,\alpha}_{-2,\delta;\lambda}} \\
    &\qquad\leq c\lambda^{-\alpha}\Bigl(
    \|\rho_\lambda a_1\|_{C^{0,\alpha}_{-1,\delta;\lambda}}
    \cdot\|\rho_\lambda a_2\|_{C^{0,\alpha}_{-1,\delta:\lambda}}
    +\|\rho_\lambda a_1\|_{C^{0,\alpha}_{-1,\delta;\lambda}}
    \cdot\|\pi_\lambda a_2\|_{C^{0,\alpha}} \\
    &\qquad\qquad\qquad+\|\pi_\lambda a_1\|_{C^{0,\alpha}}
    \cdot\|\rho_\lambda a_2\|_{C^{0,\alpha}_{-1,\delta;\lambda}}
    +\|\pi_{\lambda}a_1\|_{C^{0,\alpha}}\|\pi_{\lambda}a_2\|_{C^{0,\alpha}}\Bigr)
  \end{align*}
  and
  \begin{align*}
    &\lambda\|\sigma_\lambda Q(a_1,a_2)\|_{C^{0,\alpha}} \\
    &\qquad\leq c\lambda^{-\alpha}\Bigl(
    \|\rho_\lambda a_1\|_{C^{0,\alpha}_{-1,\delta;\lambda}}
    \cdot\|\rho_\lambda a_2\|_{C^{0,\alpha}_{-1,\delta;\lambda}}
    +\|\rho_\lambda a_1\|_{C^{0,\alpha}_{-1,\delta;\lambda}}
    \cdot\|\pi_\lambda a_2\|_{C^{0,\alpha}} \\
    &\qquad\qquad\qquad+\|\pi_\lambda a_1\|_{C^{0,\alpha}}
    \cdot\|\rho_\lambda a_2\|_{C^{0,\alpha}_{-1,\delta;\lambda}}
    +\lambda \|\pi_\lambda a_1\|_{C^{0,\alpha}}
    \cdot \|\pi_\lambda a_2\|_{C^{0,\alpha}}\Bigr).
  \end{align*}
  In particular,
  \begin{equation*}
    \|Q(a_1,a_2)\|_{\fY_\lambda} \leq c \lambda^{-2-\delta/2} \|a_1\|_{\fX_\lambda}\|a_2\|_{\fX_\lambda}
  \end{equation*}
\end{prop}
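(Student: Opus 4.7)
The plan is to decompose both arguments using $\id = \bar\pi_\lambda + \rho_\lambda = \mu_\lambda\pi_\lambda + \rho_\lambda$ and expand $Q(a_1,a_2)$ by bilinearity into the four cross-terms $Q(\rho_\lambda a_1,\rho_\lambda a_2)$, $Q(\rho_\lambda a_1,\mu_\lambda\pi_\lambda a_2)$, $Q(\mu_\lambda\pi_\lambda a_1,\rho_\lambda a_2)$ and $Q(\mu_\lambda\pi_\lambda a_1,\mu_\lambda\pi_\lambda a_2)$. Each piece is estimated separately in a suitable weighted Hölder norm, after which the outer projections $\tau_\lambda$ and $\sigma_\lambda$ are applied using their operator bounds from \autoref{Prop_ProjectionBounds}.

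The two workhorses are \autoref{Prop_Multiplication}, which together with the pointwise inequality $|Q(b_1,b_2)|\leq c|b_1||b_2|$ gives
\[
  \|Q(b_1,b_2)\|_{C^{0,\alpha}_{\ell_1+\ell_2,\delta_1+\delta_2;\lambda}} \leq c\,\|b_1\|_{C^{0,\alpha}_{\ell_1,\delta_1;\lambda}}\|b_2\|_{C^{0,\alpha}_{\ell_2,\delta_2;\lambda}},
\]
and the key inequality $\|\mu_\lambda\hat\fI\|_{C^{0,\alpha}_{-1,\delta;\lambda}} \leq c\|\hat\fI\|_{C^{0,\alpha}}$ from \autoref{Prop_ProjectionBounds}. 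Taking $\ell_1=\ell_2=-1$ and $\delta_1=\delta_2=\delta$ puts each of the four cross-terms into $C^{0,\alpha}_{-2,2\delta;\lambda}$, with norm controlled by the corresponding product of $\|\rho_\lambda a_i\|_{C^{0,\alpha}_{-1,\delta;\lambda}}$ and $\|\pi_\lambda a_i\|_{C^{0,\alpha}}$. A direct comparison of weight functions (the same computation as in \autoref{Cor_DeltaNorm}) shows $\|\cdot\|_{C^{0,\alpha}_{-2,\delta;\lambda}} \leq c\|\cdot\|_{C^{0,\alpha}_{-2,2\delta;\lambda}}$ for $\delta<0$, so the bounds transfer into the target norm. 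The first inequality then follows from the operator bound $\|\tau_\lambda\|\leq c\lambda^{-\alpha}$ on $C^{0,\alpha}_{-2,\delta;\lambda}$; for the three cross-terms of the second inequality containing at least one $\rho_\lambda a_i$, the analogous bound $\|\sigma_\lambda\alpha\|_{C^{0,\alpha}}\leq c\lambda^{-1-\alpha}\|\alpha\|_{C^{0,\alpha}_{-2,\delta;\lambda}}$ suffices, the prefactor $\lambda$ on the left-hand side absorbing the $\lambda^{-1}$ from the projection.

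The delicate term, which I expect to be the main obstacle, is $Q(\mu_\lambda\pi_\lambda a_1,\mu_\lambda\pi_\lambda a_2)$ in the second inequality: its bound carries an extra factor of $\lambda$ that the blunt machinery above cannot produce. This refinement must be extracted by a scaling argument that exploits the genuine concentration of both factors near $Q$. From \autoref{Prop_ASDDecay} and the proof of \autoref{Prop_ProjectionBounds} one has
\[
  |\mu_\lambda\pi_\lambda a_i|(x) \leq c\,\frac{\lambda^2}{(\lambda+r(x))^3}\,\|\pi_\lambda a_i\|_{C^{0,\alpha}} \qandq |\nu_\lambda\beta|(x) \leq c\,\frac{\lambda}{(\lambda+r(x))^3},
\]
the extra $\lambda$ in the second bound coming from the $L^2$-normalisation $\|\beta\|_{L^2}\leq c/\lambda$ of the orthonormal basis used in the definition of $\sigma_\lambda$. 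Substituting these into the integral over $N_xQ$ defining $\sigma_\lambda Q(\mu_\lambda\pi_\lambda a_1,\mu_\lambda\pi_\lambda a_2)(x)$ yields a radial integral bounded by
\[
  c\lambda^5 \int_0^\infty \frac{r^3}{(\lambda+r)^9}\,\rd r \cdot \|\pi_\lambda a_1\|_{C^{0,\alpha}}\|\pi_\lambda a_2\|_{C^{0,\alpha}},
\]
and the substitution $r=\lambda s$ collapses this to a finite constant independent of $\lambda$. The $C^0$-norm of $\sigma_\lambda Q(\mu_\lambda\pi_\lambda a_1,\mu_\lambda\pi_\lambda a_2)$ is therefore bounded by $c\|\pi_\lambda a_1\|_{C^{0,\alpha}}\|\pi_\lambda a_2\|_{C^{0,\alpha}}$, and the prefactor $\lambda$ on the left-hand side of the stated inequality produces precisely the claimed $\lambda$-saving. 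The Hölder part of the seminorm is handled by applying the same scaling argument to finite differences, which is where the universal $\lambda^{-\alpha}$ factor enters.

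The concluding ``in particular'' bound in the $\fY_\lambda$-norm is immediate: substituting $\|\rho_\lambda a\|_{C^{0,\alpha}_{-1,\delta;\lambda}} \leq \lambda^{\delta/2}\|a\|_{\fX_\lambda}$ and $\|\pi_\lambda a\|_{C^{0,\alpha}} \leq \lambda^{-1}\|a\|_{\fX_\lambda}$ into the two inequalities and collecting the worst scaling — which comes from the $\pi_\lambda\cdot\pi_\lambda$ cross-term — yields $\lambda^{-2-\delta/2}\|a_1\|_{\fX_\lambda}\|a_2\|_{\fX_\lambda}$, the small factor $\lambda^{-\alpha}$ being absorbed in the constant since $0<\alpha\ll|\delta|$.
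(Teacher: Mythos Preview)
Your argument for the first inequality and for the three mixed terms of the second inequality matches the paper's: decompose via $\id=\rho_\lambda+\mu_\lambda\pi_\lambda$, apply \autoref{Prop_Multiplication} to each bilinear piece, and absorb the projection with the $c\lambda^{-\alpha}$ bound from \autoref{Prop_ProjectionBounds}. The concluding $\fY_\lambda$--estimate also proceeds as you indicate (the phrase ``$\lambda^{-\alpha}$ absorbed in the constant'' is literally false, but harmless: the exponent $-2-\delta/2-\alpha$ still leaves enough room in the fixed-point argument, and the paper is no more precise on this point).

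Where you genuinely diverge from the paper is the $\pi_\lambda\cdot\pi_\lambda$ term in the $\sigma_\lambda$--estimate. You squeeze out the extra factor of $\lambda$ by a direct scaling computation: both $\mu_\lambda\pi_\lambda a_i$ carry the sharp decay $\lambda^2(\lambda+r)^{-3}$ rather than the weaker $(-1,\delta)$--weighted bound, and inserting this together with $|\nu_\lambda\beta|\leq c\lambda(\lambda+r)^{-3}$ into the defining integral for $\sigma_\lambda$ produces a scale-invariant quantity. This is correct. The paper, by contrast, exploits an algebraic cancellation: since $\mu_\lambda\hat\fI_1$ and $\mu_\lambda\hat\fI_2$ are one-forms in the \emph{fibre} direction $N^*Q$, their wedge product has no component in $\Lambda^2_4\subset T^*Q\otimes N^*Q$ (cf.\ \autoref{Prop_Lambda2SplittingHyperkahlerPair}), and $\tilde\sigma_\lambda$ projects precisely onto that summand; hence $\tilde\sigma_\lambda\pi_7^0\bigl([\mu_\lambda\hat\fI_1\wedge\mu_\lambda\hat\fI_2]\bigr)=0$ exactly. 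The residual error then comes from replacing $\pi_7^0$ by $\pi_7$ and $\tilde\sigma_\lambda$ by $\sigma_\lambda$, and the Taylor-expansion machinery of \autoref{Prop_Pi7TaylorExpansion} (as in the proof of \autoref{Prop_LFComparison}) converts this into a gain of one power of $\lambda$. Your route is more elementary and self-contained; the paper's route is more structural, pinpointing the $\Spin(7)$--geometric reason for the saving and reusing the Taylor-expansion argument already set up elsewhere.
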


\begin{proof}
  The first estimate is an immediate consequence of \autoref{Prop_Multiplication} and \autoref{Prop_ProjectionBounds}.
  For the second estimate we only have to explain why we get a factor $\lambda$ in front of $\|\pi_\lambda a_1\|_{C^{0,\alpha}} \cdot \|\pi_\lambda a_2\|_{C^{0,\alpha}}$. 
  Note that
  \begin{equation*}
   \tilde\sigma_\lambda \pi_7^0\(\mu_\lambda\hat\fI_1 \wedge \mu_\lambda\hat\fI_2\) = 0
  \end{equation*}
  because of \autoref{Prop_Lambda2SplittingHyperkahlerPair} (the $\Lambda^2_4$--component already vanishes).
  Arguing as in the proof of \autoref{Prop_LFComparison} we see that we gain a factor of $\lambda$.
\end{proof}

Setting $\tilde Q_\lambda=Q\circ R_\lambda$, \autoref{Eq_A} becomes
\begin{equation*}
  x + \tilde Q_\lambda(x) + \pi_7(F_{A_\lambda}) = 0.
\end{equation*}
In view of \autoref{Prop_ErrorEstimate}, \autoref{Prop_RightInverse} and \autoref{Prop_QuadraticEstimate}, this equation can be solved by appealing to the following consequence of Banach's fixed-point theorem.

\begin{lemma}[{\cite[Lemma~7.2.23]{Donaldson1990}}]
  \label{Lemma_BFPS}
  Let $X$ be a Banach space and let $T \co X\to X$ be a smooth map with $T(0)=0$.
  Suppose there is a constant $c>0$ such that
  \begin{equation*}
    \|Tx-Ty\|\leq c\(\|x\|+\|y\|\)\|x-y\|.
  \end{equation*}
  Then if $y\in X$ satisfies $\|y\|\leq \frac{1}{10c}$, there exists a unique $x\in X$ with $\|x\|\leq \frac{1}{5c}$ solving
  \begin{equation*}
    x+Tx=y.
  \end{equation*}
  The unique solution satisfies $\|x\|\leq 2\|y\|$.
\end{lemma}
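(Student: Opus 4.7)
The plan is to recast the equation $x + Tx = y$ as a fixed-point problem $x = Fx$ with $Fx := y - Tx$, and apply the standard Banach fixed-point theorem on the closed ball $B := \{x \in X : \|x\| \leq 1/(5c)\}$. The whole argument hinges on one preliminary observation: since $T(0) = 0$, the hypothesis applied with $x' = 0$ yields the quadratic bound $\|Tx\| \leq c\|x\|^2$ for every $x \in X$. This is the abstract analogue of the estimate $\|Q(a)\|_{\fY_\lambda} \leq c\lambda^{-2-\delta/2}\|a\|_{\fX_\lambda}^2$ from \autoref{Prop_QuadraticEstimate} and is what makes the small-data fixed-point scheme work.

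With this in hand, I would first verify that $F$ maps $B$ into itself: for $x \in B$ and $\|y\| \leq 1/(10c)$,
\begin{equation*}
  \|Fx\| \leq \|y\| + c\|x\|^2 \leq \frac{1}{10c} + c \cdot \frac{1}{25c^2} = \frac{7}{50c} < \frac{1}{5c}.
\end{equation*}
Next, for $x_1, x_2 \in B$ the hypothesis immediately gives
\begin{equation*}
  \|Fx_1 - Fx_2\| = \|Tx_1 - Tx_2\| \leq c(\|x_1\| + \|x_2\|)\|x_1 - x_2\| \leq \tfrac{2}{5}\|x_1 - x_2\|,
\end{equation*}
so $F$ is a contraction on $B$ with factor $2/5$. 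Since $B$ is a closed subset of the Banach space $X$, Banach's fixed-point theorem produces a unique $x \in B$ with $x = Fx$, i.e.\ $x + Tx = y$; this is precisely the desired uniqueness of solutions in the ball of radius $1/(5c)$.

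For the sharper bound $\|x\| \leq 2\|y\|$, I would feed the fixed-point identity back into the quadratic estimate: since $\|x\| \leq 1/(5c)$ forces $c\|x\|^2 \leq \|x\|/5$, we get
\begin{equation*}
  \|x\| = \|y - Tx\| \leq \|y\| + c\|x\|^2 \leq \|y\| + \tfrac{1}{5}\|x\|,
\end{equation*}
which rearranges to $\|x\| \leq \tfrac{5}{4}\|y\| \leq 2\|y\|$. There is no serious obstacle here: this is the soft part of the gluing scheme and only requires careful numerical bookkeeping of the constants $1/(10c)$ and $1/(5c)$ so that the self-map and contraction properties hold simultaneously. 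All of the substantive analytic work has already been carried out in \autoref{Prop_ErrorEstimate}, \autoref{Prop_RightInverse} and \autoref{Prop_QuadraticEstimate}, which supply exactly the data needed to invoke this lemma with $T = \tilde Q_\lambda := Q \circ R_\lambda$ and $y = -\pi_7(F_{A_\lambda})$, thereby producing the correction $a$ that solves \eqref{Eq_A} and completes the proof of \autoref{Thm_A}.
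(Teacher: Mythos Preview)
Your proof is correct and is exactly the standard Banach fixed-point argument one expects here. Note that the paper does not actually give its own proof of this lemma: it is quoted verbatim from \cite{Donaldson1990}*{Lemma~7.2.23} and used as a black box, so there is nothing in the paper to compare against beyond observing that your argument is the intended one.
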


Elliptic regularity implies that $A_\lambda+a$ is smooth.
Since $a$ is small, the existence of a right inverse of $L_{A_\lambda}$ guarantees the existence of a right inverse of $L_{A_\lambda+a}$;
hence, $A_\lambda+a$ is irreducible and unobstructed.
\qed

%%% Local Variables: 
%%% mode: latex
%%% TeX-master: "icf"
%%% End: 

\section{Proof of \autoref{Thm_B}}
\label{Sec_CayleyFibration}

Since $\Hol(g_\Phi)=\Spin(7)$, $b^1=b^2_7=0$ \cite[Proposition 10.6.5]{Joyce2000} and thus the product connection $\theta$ on the trivial $\SU(2)$--bundle is unobstructed.
It is reducible; however, does not cause any problems, see \autoref{Rmk_Reducible}.
We have $\ind L_\theta=-3$.
If we choose $M$ as in \autoref{Ex_ChargeOne}, then
\begin{equation*}
  \fM
  =
  \(\Re(\Hom(\C^2,\slS^+))\setminus\{0\}\)/\Z_2
  \times \Re(\slS^+\otimes U)
\end{equation*}
By \autoref{Ex_ChargeOneFueter}, the Fueter operator lifts to the Dirac operator
\begin{equation*}
  \slD\co
  \Gamma(\Re(\Hom(\C^2,\slS^+)\oplus \slS^+\otimes U))
  \to
  \Gamma(\Re(\Hom(\C^2,\slS^-)\oplus \slS^-\otimes U)).
\end{equation*}
Arguing as in \autoref{Prop_CayleyFibration}, we see that $\slD$ is surjective and has an $8$--dimensional kernel and all non-zero elements of the kernel are no-where vanishing, provided the metric on $Q$ is sufficiently close to a hyperkähler metric and the induced connection on $NQ$ is almost flat.
We can thus apply \autoref{Thm_A} and obtain a $5$--dimensional family of $\Spin(7)$--instantons over $X$.
A similar argument also proves the last assertion of the theorem.
\qed

%%% Local Variables: 
%%% mode: latex
%%% TeX-master: "icf"
%%% End: 

\section{Comparing index formulae}

\begin{prop}
\label{Prop_IndexDifference}
  Let $(X,\Phi)$ be a compact $\Spin(7)$--manifold, let $Q$ be a Cayley submanifold of $X$ and let $E_0$ and $E$ be $\SU(2)$--bundles over $X$ which are related by 
  \begin{equation*}
    c_2(E) = c_2(E_0)+\PD[Q].
  \end{equation*}
  If $A$ is a connection on $E$ and $A_0$ is a connection on $E_0$, then
  \begin{align}
    \label{Eq_IndexDifference}
    \ind L_{A} &=\ind L_{A_0} + \ind F_Q + \ind \mathring F + \frac53 \int_Q e(\Re(\Hom(E_0,\slS^+_Q)))
  \end{align}
  where $L_A$ and $L_{A_0}$ are as in \eqref{Eq_Spin7InstantonLinearisation} and
  \begin{equation*}
    \mathring F =
    \slD\co
    \Gamma(\Re(\Hom(E_\infty,\slS^+)))
    \to
    \Gamma(\Re(\Hom(E_\infty,\slS^-)))
  \end{equation*}
  as in \autoref{Ex_ChargeOneLinearisedFueter}.
\end{prop}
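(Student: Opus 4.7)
The plan is a direct bookkeeping calculation: expand both sides of \eqref{Eq_IndexDifference} as polynomials in $\sigma(Q)$, $\chi(Q)$, $[Q]\cdot[Q]$ and $\int_Q c_2(E_0|_Q)$ using the index formulae already proved in the paper, and verify the identity.

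First I would treat the left-hand side. Specialising \eqref{eq:s7instanton_index_sun} to $r=2$---so that the topological prefactor $(r^2-1)(b^1-b^0-b^2_7)$ cancels upon subtraction and $c_4$ vanishes on rank-two bundles---I obtain
\begin{equation*}
  \ind L_A-\ind L_{A_0}
   = -\tfrac16\int_X p_1(X)\bigl(c_2(E)-c_2(E_0)\bigr)
     -\tfrac43\int_X\bigl(c_2(E)^2-c_2(E_0)^2\bigr).
\end{equation*}
Substituting $c_2(E)-c_2(E_0)=\PD[Q]$ and factoring the difference of squares, Poincaré duality pushes both integrals down to $Q$. The first becomes $-\tfrac16\int_Q p_1(X)|_Q$, which simplifies via the Whitney sum $p_1(X)|_Q=p_1(TQ)+p_1(NQ)$, the Hirzebruch signature theorem $\int_Q p_1(TQ)=3\sigma(Q)$, and the identity \eqref{Eq_p1nq} already derived in the proof of \autoref{Thm_McLean}, namely $\int_Q p_1(NQ)=3\sigma(Q)+2\chi(Q)-2[Q]\cdot[Q]$. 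The second becomes $-\tfrac43\bigl(2\int_Q c_2(E_0|_Q)+[Q]\cdot[Q]\bigr)$ after noting that $c_2(E|_Q)+c_2(E_0|_Q)=2c_2(E_0|_Q)+e(NQ)$ and $\int_Q e(NQ)=[Q]\cdot[Q]$.

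For the right-hand side, \eqref{Eq_CayleyIndex} supplies $\ind F_Q$ verbatim. Since $\mathring F_\fI$ is the real Dirac operator on $Q$ coupled to the $\SU(2)$--bundle $E_0|_Q$---and the quaternionic real structure identifies its real index with the complex one---Atiyah--Singer gives
\begin{equation*}
  \ind\mathring F_\fI
   =\int_Q\hat A(Q)\ch(E_0|_Q)
   =-\tfrac14\sigma(Q)-\int_Q c_2(E_0|_Q).
\end{equation*}
The remaining term $\tfrac53\int_Q e(\Re(\slS^+_Q\otimes E_0|_Q))$ I reduce via \eqref{Eq_ep1c2} to a linear combination of $\int_Q c_2(\slS^+_Q)$ and $\int_Q c_2(E_0|_Q)$, and then eliminate $c_2(\slS^+_Q)$ using $4c_2(\slS^+_Q)=-p_1(Q)-2e(Q)$ (also derived in the proof of \autoref{Thm_McLean}), yielding $\int_Q c_2(\slS^+_Q)=-(3\sigma(Q)+2\chi(Q))/4$.

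Both sides are now explicit rational linear combinations of $\sigma(Q)$, $\chi(Q)$, $[Q]\cdot[Q]$ and $\int_Q c_2(E_0|_Q)$, and a short arithmetic comparison confirms the identity. The only non-routine point is sign bookkeeping: the Euler class $e(\Re(\slS^+_Q\otimes E_0|_Q))$ changes sign when the roles of the two $\SU(2)$--factors in \eqref{Eq_ep1c2} are interchanged, and the orientation of this rank-four real bundle must be chosen consistently so that the resulting coefficient comes out as $+\tfrac53$ rather than $-\tfrac53$.
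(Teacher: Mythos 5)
Your proposal is correct and follows essentially the same route as the paper: specialise the $\SU(2)$ index formula \eqref{eq:s7instanton_index_sun} to $r=2$, push everything down to $Q$ via Poincar\'e duality, the Whitney sum $p_1(X)|_Q=p_1(TQ)+p_1(NQ)$ and \eqref{Eq_p1nq}, and compare the coefficients of $\sigma(Q)$, $\chi(Q)$, $[Q]\cdot[Q]$ and $\int_Q c_2(E_0)$ on both sides. Your closing remark about the orientation of $\Re(\slS^+_Q\otimes E_0)$ is well taken: the identity requires $\int_Q e(\Re(\slS^+_Q\otimes E_0)) = -\tfrac34\sigma(Q)-\tfrac12\chi(Q)-\int_Q c_2(E_0)$, i.e.\ the convention $e=c_2(\slS^+_Q)-c_2(E_0)$ in \eqref{Eq_ep1c2}, which is the opposite sign from the intermediate formula displayed in the paper's own proof.
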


In the situation of \autoref{Prop_IndexDifference} whenever \autoref{Thm_A} can be applied $e(\Re(\slS^+_Q\otimes E_0))$ vanishes.
This is because in those situation the Fueter section $\fI$ gives rise to a no-where vanishing section of $\Re(\slS^+_Q\otimes E_0)$.  
Hence, \eqref{Eq_IndexDifference} can be taken as evidence that \autoref{Thm_A} gives a description of an open subset of the moduli space of $\Spin(7)$--instantons.
(Note that the gluing parameter $\lambda$ is already contained in $\ind \mathring F_{\fI}$.)

\begin{proof}[Proof of \autoref{Prop_IndexDifference}]
  By \autoref{Prop_Spin7InstantonIndex} and \eqref{Eq_p1nq} we have
  \begin{align*}
    \ind L_{A_\lambda} -  \ind L_{A_0}
    &= - \frac{1}{6} \int_Q p_1(X) - \frac43 [Q]\cdot[Q]
    - \frac83 \int_Q c_2(E_0) \\
    &= - \sigma(Q) - \frac13 \chi(Q) - [Q]\cdot[Q] - \frac83 \int_Q c_2(E_0).
  \end{align*}
  By \eqref{Eq_CayleyIndex} and \eqref{Eq_FueterIndex} we have
  \begin{equation*}
    \ind F_Q + \ind \mathring F_{\fI} = \frac14 \sigma(Q) + \frac12\chi(Q) -[Q]\cdot[Q] - \int_Q c_2(E_0).
  \end{equation*}
  Using \eqref{Eq_ep1c2} and \eqref{Eq_C2SpinorBundle}
  \begin{align*}
    \int_Q e(\Re(\Hom(E_0,\slS^+_Q))
    &=
      \int_Q e(\Re(E_0^*\otimes \slS^+_Q)) \\
    &=
      \int_Q e(\Re(E_0\otimes \slS^+_Q)) \\
    &=
      -\int_Q c_2(E_0) - \frac34 \sigma(Q) - \frac12 \chi(Q).
  \end{align*}
  Verifying \eqref{Eq_IndexDifference} is now straight-forward.
\end{proof}

%%% Local Variables: 
%%% mode: latex
%%% TeX-master: "icf"
%%% End: 

\printreferences

\end{document}

%%% Local Variables:
%%% mode: latex
%%% TeX-master: t
%%% End: